\documentclass[11pt,usenames,dvipsnames]{amsart}
\usepackage{mathpazo}
\usepackage{hyperref}
\usepackage{cleveref}
\usepackage{bm}
\usepackage{comment}
\usepackage[margin=1in]{geometry}
\usepackage{enumitem}
\usepackage{calrsfs}
\DeclareMathAlphabet{\pazocal}{OMS}{zplm}{m}{n}

\usepackage{graphicx}
\usepackage{tikz-cd}

\usepackage[cmtip, all]{xy}
\usepackage{array}

\usepackage{amssymb,amsmath,latexsym,amsthm}

\DeclareMathOperator{\Mod}{Mod}

\DeclareMathOperator{\Aut}{Aut}
\DeclareMathOperator{\Jac}{Jac}

\DeclareMathOperator{\Span}{span}

\DeclareMathOperator{\id}{id}
\DeclareMathOperator{\image}{im}

\newcommand{\bk}[2]{\langle #1, #2 \rangle}

\newcommand{\pB}{\pazocal{B}}

\newcommand{\field}[1]{\mathbb{#1}}

\newcommand{\Z}{\field{Z}}

\newcommand{\R}{\field{R}}
\newcommand{\C}{\field{C}}

\newcommand{\cI}{\mathcal{I}}

\newcommand{\aaa}[3]{\alpha_{#1} \wedge \alpha_{#2} \wedge \alpha_{#3} }
\newcommand{\aab}[3]{\alpha_{#1} \wedge \alpha_{#2} \wedge \beta_{#3} }
\newcommand{\abb}[3]{\alpha_{#1} \wedge \beta_{#2} \wedge \beta_{#3} }
\newcommand{\bbb}[3]{\beta_{#1} \wedge \beta_{#2} \wedge \beta_{#3} }
\newcommand{\grF}[2]{\gr_{#1}^{#2}}

\newcommand{\bmu}{\boldsymbol \mu}

\newcommand{\LH}[1]{L/H}

\newcommand{\CCt}{\C(\!(t)\!)}
\newcommand{\cC}{\mathcal{C}}

\DeclareMathOperator{\gr}{gr}
\DeclareMathOperator{\coker}{coker}

\DeclareMathOperator{\bu}{\mathbf{u}}
\DeclareMathOperator{\bv}{\mathbf{v}}
\DeclareMathOperator{\bw}{\mathbf{w}}

\newtheorem{theorem}{Theorem}[section]
\newtheorem{lemma}[theorem]{Lemma}
\newtheorem{proposition}[theorem]{Proposition}
\newtheorem{corollary}[theorem]{Corollary}
\newtheorem*{theorem*}{Theorem}

\theoremstyle{definition}

\newtheorem*{question*}{Question}

\theoremstyle{remark}
\newtheorem{remark}[theorem]{Remark}

\newtheorem{example}[theorem]{Example}

\numberwithin{equation}{section}
\numberwithin{table}{section}
\numberwithin{figure}{section}

\newcommand\classification[2][]{%
  \gdef\@classification{%
    \href{http://www.ams.org/msc/}%
{\textit{2020 Mathematics Subject Classification}} \ignorespaces#2\unskip}}

\title{The Ceresa class and tropical curves of hyperelliptic type}

\author{Daniel Corey}
\email{corey@math.tu-berlin.de}
\address{TU-Berlin, Stra\ss e des 17 Juni 136, 10623 Berlin, Germany}

\author{Wanlin Li}
\email{liwanlin@crm.umontreal.ca}
\address{Wanlin Li: Centre de Recherches Math\'ematiques, 2920 chemin de la tour, Montreal, QC H3C 3J7, Canada}

\classification{14T25 (primary), and 14H30, 15C50, 57K20 (secondary)}

\begin{document}
	
\maketitle

\vspace{-1cm}

\begin{abstract}
    We define a new algebraic invariant of a graph $G$ called the Ceresa-Zharkov class and show that it is trivial if and only if $G$ is of hyperelliptic type, equivalently, $G$ does not have as a minor the complete graph on 4 vertices or the loop of 3 loops. After choosing edge-lengths, this class specializes to an algebraic invariant of a tropical curve with underlying graph $G$ that is closely related to the Ceresa cycle for an algebraic curve defined over $\CCt$.

    \medskip
    
    \noindent \textbf{Keywords}: tropical curves, Ceresa class, mapping class group, hyperelliptic curves.
    
    \medskip
    
    \noindent \textbf{2020 MSC}: 14T25 (primary)  57K20 (secondary).
    
\end{abstract}

\vspace{-.5cm}

\section{Introduction} 

Given a smooth genus $g\geq 2$ algebraic curve $C$ together with a point $p \in C$, there is a canonical null-homologous 1-cycle in its Jacobian $\Jac(C)$ obtained by taking the difference of the images of $C$ under the two Abel-Jacobi maps
\begin{equation*}
    C \hookrightarrow \Jac(C) \hspace{10pt} x\mapsto [x] - [p] \hspace{10pt} \text{and} \hspace{10pt} x\mapsto [p] - [x].
\end{equation*}
This is called the \textit{Ceresa cycle}, and we denote it by $C - C^{-}$. A landmark result of Ceresa in \cite{Ceresa} is that this cycle, for a very general curve of genus $g\geq 3$, is nontrivial in the Griffiths group of null-homologous cycles modulo algebraic equivalence. Nevertheless, it is always trivial for hyperelliptic curves, and it has long been conjectured that these are the only curves with trivial Ceresa cycle \cite[Question 8.5]{Hain85}, \cite[Remark 1.2]{BLLS}. Although several recent results \cite{Beauville21, BeauvilleSchoen, BLLS, LS} present nonhyperelliptic curves whose Ceresa cycles give rise to torsion classes in the  Griffiths group, this problem remains open. 
The goal of this paper is to probe the relationship between Ceresa triviality and hyperellipticity from the tropical viewpoint.

Degeneration techniques have long been used to study complex algebraic curves, both from the topological and  algebraic/arithmetic perspectives. Tropical geometry provides a systematic framework for recording the combinatorial data of a \textit{stable} degeneration as a \textit{tropical curve}, i.e., a graph with edge lengths and (possible) vertex weights. Topologically, stable degeneration is modeled by a family $\cC \to D$ of Riemann surfaces over a small complex disc that is holomorphic over $D\setminus x$ and the fiber over $x$ is a stable curve. Restricting to an infinitesimal neighborhood of $x$, we obtain a smooth algebraic curve $C$ over $\CCt$  that has stable reduction. The tropical curve corresponding to this degeneration is the (vertex-weighted) dual graph of the special fiber and its edge lengths record the speeds to which the nodes in the special fiber are formed.

In \cite{CEL}, the authors define a Ceresa class in the topological and tropical contexts that agrees with the $\ell$-adic Ceresa class---the image of the Ceresa cycle of a curve defined over $\CCt$ under the $\ell$-adic Abel-Jacobi map---after applying a suitable comparison morphism. In this paper, we define the  \textit{Ceresa-Zharkov} class $\bw(\Gamma)$ of a tropical curve $\Gamma$; this is a particular homomorphic image of the tropical Ceresa class, see \S \ref{sec:CeresaZharkov} for the precise formulation. Notably, nontriviality of the Ceresa-Zharkov class implies nontriviality of the tropical Ceresa class. 

In an effort to emulate the notion of generic real edge lengths, we define a graph-theoretic Ceresa-Zharkov class $\bw_{\tau}(G)$ (depending on a hyperelliptic quasi-involution $\tau$ of $\Sigma_g$, see \S\ref{sec:tropicalCeresaClass}), which lives in a module with coefficients in a polynomial ring whose variables correspond to the edges of $G$. Given a tropical curve $\Gamma$ with underlying graph $G$, (a representative of) the class $\bw(\Gamma)$ is obtained by evaluating $\bw_{\tau}(G)$ at the edge-lengths of $\Gamma$. We define what it means for $\bw_{\tau}(G)$ to be trivial---whence we call $G$ \textit{Ceresa-Zharkov} trivial---so that triviality of  $\bw_{\tau}(G)$ implies triviality of $\bw(\Gamma)$ for any tropical curve with underlying graph $G$. However, nontriviality of $\bw_{\tau}(G)$ does not imply nontriviality of $\bw(\Gamma)$ for any particular $\Gamma$ with underlying graph $G$, rather we view this as saying that $\bw(\Gamma)$ is \textit{generically} nontrivial. 

Our main theorem completely determines when a graph is Ceresa-Zharkov trivial, and it is closely related to hyperellipticity as we now explain. According to the tropical Torelli theorem, it is possible for nonisomorphic tropical curves to have isomorphic Jacobians as principally polarized tropical abelian varieties. A tropical curve whose Jacobian is isomorphic to that of a hyperelliptic tropical curve is said to be \textit{of hyperelliptic type}.   These tropical curves are defined in \cite{Corey}, where it is shown that being of hyperelliptic type depends only on the underling graph, is preserved when taking connected minors, and its forbidden minors are $K_4$ and $L_3$, see Figure \ref{fig:K4L3}. 

\begin{theorem*}[Theorem \ref{thm:WCTiffHET}]
A connected graph $G$ of genus $g\geq 2$ is Ceresa-Zharkov trivial if and only if $G$ is of hyperelliptic type, or equivalently, if and only if $G$ has no $K_4$ or $L_3$ minor.  
\end{theorem*}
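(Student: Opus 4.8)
The plan is to prove the two implications separately, using the forbidden-minor characterization of hyperelliptic type from \cite{Corey} as the organizing principle. For the ``if'' direction, suppose $G$ is of hyperelliptic type, i.e.\ has no $K_4$ or $L_3$ minor. First I would reduce to a manageable normal form: by the structure theory of graphs of hyperelliptic type, every such $G$ is built up from cycles and ``banana'' graphs (two vertices joined by parallel edges) by the operations of adding parallel edges, subdividing edges, and gluing along vertices or single edges (this is essentially the treewidth-$\le 2$/series-parallel picture constrained by the absence of $L_3$). The key point is that a hyperelliptic quasi-involution $\tau$ of $\Sigma_g$ can be chosen compatibly with this decomposition, and one shows directly that $\bw_\tau(G)$ vanishes: the hyperelliptic involution acts on $H_1(G;\Z)$ (equivalently on the relevant graded pieces of the lower central series quotients that enter the definition of the Ceresa--Zharkov class in \S\ref{sec:CeresaZharkov}) by $-1$, and the class $\bw_\tau(G)$, being built from a commutator/cup-product expression, is forced to be $\tau$-anti-invariant while simultaneously $\tau$-invariant, hence trivial over the polynomial ring. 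I would make this precise by tracking how $\bw_\tau$ behaves under the generating operations, checking that subdivision and adding parallel edges do not create nontrivial classes and that gluing along a vertex or edge takes trivial classes to trivial classes.

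For the ``only if'' direction, the contrapositive says: if $G$ has a $K_4$ or $L_3$ minor, then $\bw_\tau(G)$ is nontrivial. The strategy here is a minor-monotonicity argument plus two base cases. The base cases are $G = K_4$ and $G = L_3$: for each I would write down explicit generators of $H_1(G;\Z)$, choose the hyperelliptic quasi-involution $\tau$ explicitly, compute the relevant triple products $\alpha_i \wedge \alpha_j \wedge \alpha_k$ etc.\ (using the notation \aaa{i}{j}{k}, \aab{i}{j}{k}, \abb{i}{j}{k}, \bbb{i}{j}{k} set up in the preamble), and exhibit a specific component of $\bw_\tau(G)$ that is a nonzero polynomial in the edge variables. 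For $L_3$ (three loops) this should be a short direct computation; for $K_4$ it is the classical fact that the Ceresa class is nontrivial, transported to the graph-theoretic setting. The second ingredient is that Ceresa--Zharkov nontriviality is inherited from minors: if $G'$ is a connected minor of $G$ and $\bw_\tau(G')$ is nontrivial, then $\bw_\tau(G)$ is nontrivial. For this I would use that contracting or deleting an edge corresponds to setting the associated edge variable to a unit (for contraction) or to $0$ (for deletion), or to a face/subspace restriction of the homology lattice, and that such specializations of a nonzero polynomial class can be arranged to remain nonzero — this is where the flexibility of having a whole polynomial ring of coefficients, rather than fixed real edge lengths, is essential.

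The main obstacle I anticipate is the minor-monotonicity step in the ``only if'' direction. The definition of $\bw_\tau(G)$ depends on a choice of hyperelliptic quasi-involution $\tau$ of $\Sigma_g$, and when we pass from $G$ to a minor $G'$ of smaller genus, we must match a quasi-involution on the smaller surface with the restriction of (a conjugate of) the one upstairs, compatibly with the change in the symplectic homology lattice; edge contraction in particular changes $g$ and can interact subtly with the lower central series quotients. Controlling this — showing that for \emph{some} admissible $\tau$ on $G$ the class specializes to the nontrivial class computed on $G'$ — is the technical heart. A secondary difficulty is verifying that the class is genuinely \emph{nonzero as a polynomial} in the $K_4$ base case, i.e.\ that no cancellation occurs among the triple-wedge terms after applying the anti-invariance constraint; I expect this to require a careful but finite computation, possibly organized via a symmetry reduction using $\Aut(K_4) = S_4$. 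Finally, on the ``if'' side, the structural decomposition of hyperelliptic-type graphs must be stated in a form strong enough to carry a \emph{compatible} family of quasi-involutions, which may require proving a mild strengthening of the results recalled from \cite{Corey}.
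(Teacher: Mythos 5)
Your overall architecture for the ``only if'' direction (base cases $K_4$, $L_3$ plus minor-monotonicity) matches the paper's, but the minor-monotonicity step as you describe it contains a concrete error and is missing the idea that makes it work. You say deletion of an edge corresponds to setting its variable to $0$ and contraction to setting it to a unit; in fact it is \emph{contraction} that corresponds to evaluating $x_f \mapsto 0$ (the paper's Proposition~\ref{prop:weakCeresaContract}), and this is the easy, genus-preserving case. Deletion of a non-separating edge drops the genus, so $H$, $L/H$, and the ambient surface all change, and there is no specialization of the polynomial class that realizes it --- this is exactly where a naive monotonicity argument fails. The paper's way around this is Lemma~\ref{lem:K4L3minor}: because $K_4$ and $L_3$ are complete graphs with parallel edges added, one can reorder the minor operations so that \emph{every deleted edge is a loop or is parallel to another edge}, and then one only needs deletion-monotonicity in that special case (Proposition~\ref{prop:removeParallelEdge}), which is proved by a delicate explicit comparison of the classes on $\Sigma_{g+1}$ and $\Sigma_g$ using the block structure of $Q_G$. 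Without this reduction your monotonicity step does not go through; general edge-deletion monotonicity is only obtained a posteriori as a corollary of the theorem.

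The ``if'' direction has a second gap: your anti-invariance argument (the hyperelliptic involution acts by $-1$, forcing $\bw_\tau(G)$ to be both invariant and anti-invariant) presupposes that $G$ actually carries a hyperelliptic involution, but a graph \emph{of hyperelliptic type} need not be hyperelliptic --- that distinction is the whole point of the definition (only the Jacobian is required to be isomorphic to that of a hyperelliptic curve). The bridge the paper uses is \cite[Theorem~4.5]{Corey}: every $2$-connected graph of hyperelliptic type is an edge-contraction of a graph that is \emph{strongly} of hyperelliptic type, i.e.\ one admitting genuine hyperelliptic edge lengths. One then proves triviality for strongly hyperelliptic-type graphs (via the separating-pair computation of Proposition~\ref{prop:tauSwapSeparatingPair}, which is a direct calculation rather than a formal invariance argument, since the individual terms $J([T_{\ell_e},\tau])$ do not vanish when $\tau$ swaps a separating pair) and transports the result along contraction. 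Your proposed series-parallel decomposition with ``compatible'' quasi-involutions would have to reprove something equivalent to this, and as stated it does not address graphs that are of hyperelliptic type without being hyperelliptic.
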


Our choice for the name ``Ceresa-Zharkov'' class comes from a related construction due to Zharkov in \cite{zharkov}.  Independent of the tropical Ceresa class described above, Zharkov uses the tropical analogs of curves, Jacobians, the Abel-Jacobi map, and algebraic equivalence, to define and study a purely tropical Ceresa cycle.  The main result of his paper is that this ``tropical Ceresa cycle'' is not (tropically) algebraically equivalent to 0 for a generic tropical curve with a $K_4$ subgraph; here, $K_4$ is the complete graph on 4 vertices. As observed in \cite[Remark~7.3]{CEL}, when the underlying graph of $\Gamma$ is $K_4$, Zharkov's Ceresa cycle and notion of triviality coincides with our Ceresa-Zharkov class. This suggests the following question. 

\begin{question*}
What is the precise relationship between the Ceresa-Zharkov class and the tropical Ceresa cycle defined by Zharkov in \cite{zharkov}?
\end{question*}

Here is an outline of the paper. In \S \ref{sec:background}, we recall the construction of the topological and tropical Ceresa class from \cite{CEL}, then define the Ceresa-Zharkov class for tropical curves. Next, in \S \ref{sec:polynomialAlgebra}, we define and study the polynomial algebra in which the graph-theoretic Ceresa-Zharkov class is defined. We define the Ceresa-Zharkov class for graphs in \S \ref{sec:CeresaZharkov}, as well as study its elementary properties. Finally, we investigate the relationship between Ceresa-Zharkov triviality and being of hyperelliptic type in \S\ref{sec:MainTheorem}, where we prove the main theorem. 

\section*{Acknowledgments}
We thank Jordan Ellenberg for helpful discussions, especially at the beginning of this project. DC was supported by NSF-RTG grant 1502553 and the SFB-TRR project ``Symbolic Tools in
Mathematics and their Application'' (project-ID 286237555). WL was supported by the Centre de Recherches Math\'ematiques- Institut des Sciences Math\'ematiques postdoctoral fellowship. 

\section{Background}
\label{sec:background}

\subsection{Stable graphs, tropical curves and tropical Jacobians}\label{subsec:tropicalcurve}

Given a graph $G$, denote by $V(G)$ and $E(G)$ its set of vertices and edges, respectively. An edge of $G$ is a \textit{loop} if it is adjacent to a single vertex, and a pair of non-loop edges $(f,f')$ are \textit{parallel} if they join the same pair of vertices.  A (integral, unweighted) \textit{tropical curve} $\Gamma$ consists of a finite connected graph $G$, possibly with loops and parallel edges, together with a positive integer-valued function $c: E(G) \to \mathbb{Z}$ on the edge set $E(G)$. We view $G$ as the \textit{underlying graph} of $\Gamma$ and  $c(e)$ as the \textit{length} of the edge $e$. The \textit{genus} of $\Gamma$, written as $g(\Gamma)$, is the first Betti number of $G$, i.e., 
\begin{equation*}
    g(\Gamma) = |E(G)| - |V(G)| + 1.
\end{equation*}
A tropical curve $\Gamma$ is said to be \textit{2-connected} if $G$ has no cut-vertices; in particular, such a graph cannot have a loop or a bridge. 

Let $\Gamma = (G,c)$ be a genus $g\geq 2$ tropical curve and fix an orientation on $G$. 
The \textit{Jacobian} of $\Gamma$ is the real $g$-dimensional torus 
\begin{equation*}
    \Jac(\Gamma) = H^1(G,\R) / H^1(G,\Z)
\end{equation*}
together with the positive definite quadratic form $Q_{\Gamma}$ on $H_1(G,\R)$ given by
\begin{equation*}
    Q_{\Gamma}\left( \sum_{e\in E(G)} a_{e} \cdot e \right) = \sum_{e\in E(G)} a_e^2 \cdot c(e).
\end{equation*}

The valence of a vertex $v$ is the number of half edges adjacent to it; in particular a loop edge contributes 2 to the valence. A connected graph  is \textit{stable} if every vertex has valence at least $3$, and a tropical curve is stable if its underlying graph is stable. Two tropical curves are \textit{tropically equivalent} if one can be obtained from the other by a sequence of the following moves:
\begin{itemize}[noitemsep]
\item[-] adding or removing a 1-valent vertex and its incident edge, or 
\item[-] adding or removing a 2-valent vertex, preserving the underlying metric space. 
\end{itemize}
Every tropical curve of genus $g\geq 2$ is tropically equivalent to a unique stable tropical curve. 

A tropical curve is \textit{hyperelliptic} if there is an involution $\sigma$ of $\Gamma$ such that the quotient $\Gamma/\sigma$ (in the sense of \cite[\S~2.2]{Chan}) is a tree. A tropical curve is said to be \textit{of hyperelliptic type} if its Jacobian is isomorphic to the Jacobian of a hyperelliptic tropical curve as a principally polarized tropical abelian variety. This notion is defined and studied in \cite{Corey}, where it is shown that $\Gamma$ is of hyperelliptic type if and only if its underlying graph has no $K_4$ or $L_3$ minor (\cite[Theorem 1.1]{Corey}), see Figure \ref{fig:K4L3}. Moreover, if $\Gamma$ is a 2-connected tropical curve of hyperelliptic type, then there is a hyperelliptic tropical curve $\Gamma'$ that specializes to $\Gamma$ (\cite[Theorem 4.5]{Corey}). 

\begin{figure}
  \includegraphics[height=2.5cm]{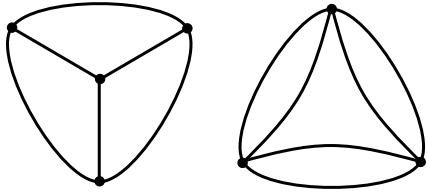}
  \caption{The graphs $K_4$ (left) and $L_3$ (right)}
  \label{fig:K4L3}
\end{figure}

\subsection{Dual graphs and multitwists from tropical curves}\label{subsec:multitwists}

We assume familiarity with surface topology and refer the reader to \cite{FarbMargalit} for a comprehensive treatment. Given an orientable topological real surface $S$, possibly with boundary or punctures, denote by $\Mod(S)$ its mapping class group. We write $\Sigma_g$ for a closed genus-$g$ surface and $\Sigma_g^1$ a genus $g$ surface with one boundary component. We always identify $\Sigma_g^1$ as the subsurface of $\Sigma_g$ obtained by removing a small open disc.  Given a curve $\gamma$ on $\Sigma_g$ or $\Sigma_g^1$, write $T_{\gamma}$ for the (left-handed) Dehn twist about $\gamma$.

Let  $\Lambda$ be a finite collection of pairwise disjoint isotopy classes of simple closed curves on $\Sigma_g$.  Its (unweighted) \textit{dual graph} is the graph with
\begin{itemize}
    \item[-] a vertex $v_S$ for every connected component $S$ of $\Sigma_g\setminus \bigcup_{\ell \in \Lambda} \ell$, and
    \item[-] an edge $e_{\ell}$ between $v_S$ and $v_{S'}$ for each $\ell$ in the boundary of $S$ and $S'$. The curve $\ell$ is said to be \textit{dual} to $e_\ell$.   
\end{itemize}
Any connected genus-$g$ graph is the dual graph of such a configuration, and $T_{\Gamma} = T_{\Gamma'}$ where $\Gamma'$ is the unique stable tropical curve tropically equivalent to $\Gamma$.
We are primarily interested in the case where $\Lambda$ is \textit{Lagrangian} in the sense of \cite{CEL}, i.e., each component $S$ of $\Sigma_g\setminus \bigcup_{\ell \in \Lambda} \ell$ has genus $0$. 

Let $\Lambda_1$ and $\Lambda_2$ be two Lagrangian arrangements of curves on $\Sigma_g$ as above such that their dual graphs, $G_1$ and $G_2$ respectively, are stable. Then $G_1$ is isomorphic to $G_2$ if and only if there is a mapping class of $\Sigma_g$ that takes $\Lambda_1$ to $\Lambda_2$; 
this follows from the well-known identification of the quotient of the curve complex of $\Sigma_g$ by $\Mod(\Sigma_g)$ with the tropical moduli space of genus $g$ tropical curves of total edge-length 1. 

Given a genus-$g$ tropical curve $\Gamma = (G,c)$ and $\Lambda$ an arrangement of curves on $\Sigma_g$ whose dual graph is $G$, define the multitwist
\begin{equation*}
    T_{\Gamma} = \prod_{e \in E(G)} T_{\ell_e}^{c(e)}.
\end{equation*}
By the previous paragraph and the fact that $\sigma T_{\ell}\sigma^{-1} = T_{\sigma(\ell)}$ for any $\sigma\in \Mod(\Sigma_g)$, the mapping class $T_{\Gamma}$ is well-defined up to conjugation in $\Mod(\Sigma_g)$.

\subsection{The Johnson homomorphism}
\label{sec:Johnson}

Let $\cI_g^1 \leq \Mod(\Sigma_g^1)$, resp. $\cI_g \leq \Mod(\Sigma_g)$, denote the Torelli group, and set
\begin{equation*}
    H = H_1(\Sigma_g^1,\Z) \cong H_1(\Sigma_g,\Z), \hspace{10pt} \text{ and } \hspace{10pt} L=\wedge^3H.
\end{equation*}
The intersection product on $H$ induces a $2$-form $\omega \in \wedge^2H$, and taking the exterior product with $\omega$ yields an injection $H\hookrightarrow L$.  The \textit{Johnson homomorphism} $J:\cI_g^1 \to L$, resp. $J:\cI_g \to L/H$, may be characterized in the following way. By \cite[Theorem 2]{Powell}, the Torelli group is generated by separating twists (Dehn twists about separating curves), and bounding pair maps (a product $T_{\gamma}T_{\gamma'}^{-1}$ where $\gamma,\gamma'$ form a separating pair). If $\gamma$ is a separating curve on $\Sigma_g$, then $J(T_{\gamma}) = 0$. Now suppose that $\gamma,\gamma'$ are a separating pair. The removal of $\gamma \cup \gamma'$ from $\Sigma_g^1$ separates $\Sigma_g^1$ into two surfaces $S$ and $S'$; let $S$ be the subsurface not containing the boundary component of $\Sigma_g^1$. The form $\omega$ restricts to the intersection 2-form on $S$, denote this by $\omega_S$. Then
\begin{equation*}
    J(T_{\gamma}T_{\gamma'}^{-1}) = \omega_{S} \wedge [\gamma]
\end{equation*}
where $[\gamma]$ is oriented so that $S$ appears on its right. The Johnson homomorphism on $\cI_g$ is defined in a similar way, except that one can choose either surface $S$ or $S'$ in the above formula; the two possible expressions are equivalent modulo $H$. 

\subsection{The tropical Ceresa class}
\label{sec:tropicalCeresaClass}

Next, we briefly recall the definition of the Ceresa class of $\Gamma$, \cite{CEL} for more details.  Fix a hyperelliptic quasi-involution $\tau$, i.e., a mapping class that acts as $-I$ on $H$. Consider the map 
\begin{equation*}
    \nu_{\tau}: \Mod(\Sigma_g) \to \LH{g} \hspace{20pt} \gamma \mapsto J([\gamma,\tau]).
\end{equation*}
This is a crossed homomorphism, and its class in $H^1(\Mod(\Sigma_g),\LH{g})$ is independent of the choice of $\tau$ \cite[Proposition~2.1]{CEL}. The \textit{$\tau$-Ceresa cocycle} of $\Gamma$, denoted by $\nu_{\tau}(\Gamma)$, is the restriction of $\nu_{\tau}$ to $\langle T_{\Gamma}\rangle \cong \Z$. Similarly, the \textit{Ceresa class} of $\Gamma$, denoted by $\nu(\Gamma)$, is the class of $\nu_{\tau}(\Gamma)$ in $H^1(\Z, \LH{g})$.

Let $Y$ be the Lagrangian subspace of $H$ spanned by the homology classes $\{[\ell] \, : \, \ell \in \Lambda\}$, and consider the following filtrations
\begin{equation*}
    F_q L = (\wedge^q Y) \wedge (\wedge^{3-q} H), \hspace{15pt} F_q (L/H) = \frac{F_{q}L + H}{H}, \hspace{15pt} \gr_q^{F}(L/H) = \frac{F_q(L/H)}{F_{q-1}(L/H)}.
\end{equation*}
Since $H\subset F_1L$ and $F_3L \cap H = \{0\}$, we have
{\footnotesize
\begin{equation}
\label{eq:explicitFq}
    F_0(L/H) = \frac{L}{H}, \hspace{10pt}     
    F_1(L/H) = \frac{F_1L}{H}, \hspace{10pt} 
    F_2(L/H) = \frac{F_2L+H}{H}, \hspace{10pt}
    F_3(L/H) \cong \frac{F_3L}{F_3L \cap H} \cong F_3L. \hspace{10pt}
\end{equation}
}
Then the map $\delta_{\Gamma}-I$ restricts to the graded components
\begin{equation*}
    \delta_{\Gamma}-I : \gr_{q}^F(L/H) \to \gr_{q-1}^F(L/H).
\end{equation*}
Define groups
\begin{align*}
    A(\delta_{\Gamma}) &= \image( H^1(\langle \delta_{\Gamma} \rangle, F_2(L/H)) \to H^1(\langle \delta_{\Gamma} \rangle, L/H)) \cong \frac{F_2L + H}{(\delta_{\Gamma}-I)(F_1L) + H} \\
    B(\delta_{\Gamma}) &= \coker(\delta_{\Gamma} - I: \gr_1^{F}(L/H) \to \gr_2^{F}(L/H)) \cong \frac{F_2L + H}{(\delta_{\Gamma}-I)(F_1L) + F_3L + H} \\
    C(\delta_{\Gamma}) &= \coker\left( (\delta_{\Gamma} - I)^2: \gr_1^{F}(L/H) \to \gr_3^{F}(L/H)\right) \cong \frac{F_3L}{(\delta_{\Gamma}-I)^2(F_1L)}
\end{align*}
and we have homomorphism $A(\delta_{\Gamma}) \to B(\delta_{\Gamma})$ \cite[Proposition~5.14]{CEL} and $(\delta_{\Gamma} - I) : B(\delta_{\Gamma}) \to C(\delta_{\Gamma})$. 
By \cite[Theorem~6.6]{CEL}, there is a hyperelliptic quasi-involution $\tau$ such that $\nu_{\tau}(\Gamma)$ lies in $F_2(L/H)$. In practice, whenever we compute the tropical Ceresa class, we always use such a $\tau$. In particular, the Ceresa class $\nu(\Gamma)$ lies in $A(\delta_{\Gamma}) \subset H^1(\langle \delta_\Gamma \rangle,L/H)$. Now we fix the following notation for future usage
\begin{equation*}
    \bv(\Gamma) = \text{image of } \nu(\Gamma) \text{ in } B(\delta_{\Gamma})  \hspace{20pt} \bw(\Gamma) = (\delta_{\Gamma}-I)(\bv(\Gamma)) \in C(\delta_\Gamma).
\end{equation*}
For a suitable $\tau$, the class $\bv(\Gamma)$ has a particularly nice representative
\begin{equation}
\label{eq:CeresaInBdelta}
    \bv_\tau(\Gamma) = \sum_{e\in E(G)} c(e)\, J([T_{\ell_e}, \tau]).
\end{equation}
The tropical curve $\Gamma$ is \textit{Ceresa trivial} if $v(\Gamma)=0$. If $\Gamma$ is Ceresa trivial then $\bv(\Gamma) = 0$. Given the above formula, $\bv(\Gamma)$ is easier to compute than $\nu(\Gamma)$, and therefore a good strategy to show that $\Gamma$ is Ceresa nontrivial is to show that $\bv(\Gamma) \neq 0$ in $B(\delta_{\Gamma})$.

\section{The action of a multitwist on some polynomial algebras} 
\label{sec:polynomialAlgebra}

\subsection{Multitwist on the homology of a surface}
\label{sec:multitwistHomology}
Throughout, given  $\Z$-modules $A$ and $R$, we set $A_R = A\otimes_{\Z} R$. Let $\Lambda$ be a collection of pairwise disjoint isotopy classes of simple closed curves on $\Sigma_{g}^{1}$ or $\Sigma_{g}$. Define a polynomial ring
\begin{equation*}
R[\Lambda] = \Z[x_{\ell} \, : \, \ell \in \Lambda]
\end{equation*}
which we denote by $R$ when $\Lambda$ is clear from the context.
For any loop $\ell\in\Lambda$, let $\delta_{\ell}: H_{R} \to H_{R}$ be the homomorphism defined on simple tensors by
\begin{equation}
\label{eq:deltaell}
    \delta_{\ell}(h\otimes a) = h\otimes a + \bk{h}{[\ell]} [\ell]  \otimes x_{\ell} a.  
\end{equation}
The map $\delta_{\ell}$ is an isomorphism with inverse
\begin{equation}
\label{eq:deltaellinv}
    \delta_{\ell}^{-1}(h\otimes a) = h\otimes a - \bk{h}{[\ell]} [\ell]  \otimes x_{\ell} a. 
\end{equation}
Because the loops in $\Lambda$ are pairwise disjoint, the $\delta_{\ell}$'s pairwise commute. The homomorphism $\delta_\ell$ is related to the symplectic representation of $T_{\ell}$ in the following way. Given a function $c:\Lambda \to \Z$, let $R\to \Z$ be the evaluation ring homomorphism $x_{\ell}\mapsto c(\ell)$. Then, under the identification $H \cong H\otimes_{\Z} R \otimes_{R} \Z$, the map $\delta_{\ell} \otimes_{R} \id_{\Z}$ equals the symplectic representation of $T_{\ell}^{c(\ell)}$.  

\begin{proposition}
\label{prop:prod2sum}
Given integers $\{a_{\ell} \, : \, \ell \in \Lambda \}$, we have
\begin{equation*}
    \left(\prod_{\ell \in \Lambda} \delta_{\ell}^{a_{\ell}} \right) - I =  \sum_{\ell\in \Lambda} a_{\ell} (\delta_{\ell}-I) \hspace{10pt} \text{ as maps } \hspace{10pt} H_R \to H_R.
\end{equation*} 
\end{proposition}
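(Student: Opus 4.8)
The plan is to pass to the ``nilpotent parts'' $N_{\ell} := \delta_{\ell} - I$ and exploit that these operators square to zero and, more strongly, annihilate one another. From \eqref{eq:deltaell} we read off, on simple tensors,
\[
N_{\ell}(h\otimes a) = \bk{h}{[\ell]}\,[\ell]\otimes x_{\ell}a .
\]
The key observation I would establish first is that $N_{\ell}N_{\ell'} = 0$ for \emph{every} pair $\ell,\ell'\in\Lambda$, including $\ell=\ell'$. Indeed, applying the two maps in succession gives
\[
N_{\ell}N_{\ell'}(h\otimes a) = \bk{h}{[\ell']}\,\bk{[\ell']}{[\ell]}\,[\ell]\otimes x_{\ell}x_{\ell'}a ,
\]
and the coefficient $\bk{[\ell']}{[\ell]}$ vanishes: if $\ell\neq\ell'$ this is because the curves in $\Lambda$ are pairwise disjoint, so their algebraic intersection number is $0$; if $\ell=\ell'$ it is because the intersection pairing on $H$ is alternating. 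In particular $N_{\ell}^2 = 0$. (This also re-derives the commutativity of the $\delta_{\ell}$'s noted in the text, since $(I+N_{\ell})(I+N_{\ell'}) = I+N_{\ell}+N_{\ell'} = (I+N_{\ell'})(I+N_{\ell})$.)

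Next I would evaluate the individual powers. Since $N_{\ell}^2=0$, for any integer $a_{\ell}\geq 0$ the binomial expansion collapses to $\delta_{\ell}^{a_{\ell}} = (I+N_{\ell})^{a_{\ell}} = I + a_{\ell}N_{\ell}$; for $a_{\ell}<0$ one first notes $(I+N_{\ell})(I-N_{\ell}) = I$, so $\delta_{\ell}^{-1} = I - N_{\ell}$, matching \eqref{eq:deltaellinv}, and then the same collapse applied to $(I-N_{\ell})^{|a_{\ell}|}$ gives $\delta_{\ell}^{a_{\ell}} = I + a_{\ell}N_{\ell}$ again. Then, expanding the finite product,
\[
\prod_{\ell\in\Lambda}\delta_{\ell}^{a_{\ell}} \;=\; \prod_{\ell\in\Lambda}\bigl(I + a_{\ell}N_{\ell}\bigr),
\]
every term in the expansion that involves two or more of the $N_{\ell}$'s contains a sub-product $N_{\ell}N_{\ell'}$, which is $0$ by the first step; hence only the constant term $I$ and the linear terms $a_{\ell}N_{\ell}$ survive. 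Subtracting $I$ yields $\sum_{\ell\in\Lambda}a_{\ell}N_{\ell} = \sum_{\ell\in\Lambda}a_{\ell}(\delta_{\ell}-I)$, which is the claim.

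The computation is essentially formal; the one point demanding care, and the conceptual content of the statement, is the vanishing $N_{\ell}N_{\ell'}=0$, which is precisely where disjointness of the curves in $\Lambda$ together with the alternating property of the symplectic form is used. The remainder is routine manipulation of a finite family of pairwise-annihilating square-zero operators, with only the minor bookkeeping needed to cover negative exponents.
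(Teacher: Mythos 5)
Your proof is correct and is essentially the paper's argument in different packaging: the paper verifies $\delta_{\ell}\delta_{\ell'}-I=(\delta_{\ell}-I)+(\delta_{\ell'}-I)$ and $\delta_{\ell}^{-1}-I=-(\delta_{\ell}-I)$ directly on simple tensors, which amounts to exactly your vanishing $N_{\ell}N_{\ell'}=0$ via $\bk{[\ell']}{[\ell]}=0$. Your square-zero-operator formulation just makes the iteration over the full product and the negative exponents slightly more systematic.
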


\begin{proof}
It suffices to show 
\begin{align*}
    &\delta_{\ell}\delta_{\ell'} - I = (\delta_{\ell} - I) + (\delta_{\ell'} - I),  \text{ and } \\ 
    &\delta_{\ell}^{-1} - I = -(\delta_{\ell}-I).
\end{align*}
The second formula follows readily from Formulas \eqref{eq:deltaell} and \eqref{eq:deltaell}. For the first formula, we have
\begin{align*}
    (\delta_{\ell}\delta_{\ell'}-I)(h\otimes a) &= \delta_{\ell}(h \otimes a + \langle h, [\ell'] \rangle [\ell'] \otimes x_{\ell'} a)-h\otimes a \\
    &=  (\delta_{\ell}-I)(h\otimes a) + \langle h, [\ell'] \rangle \delta_{\ell}( [\ell'] \otimes x_{\ell'} a) \\
    &= (\delta_{\ell}-I)(h\otimes a) + \langle h, [\ell'] \rangle ([\ell'] \otimes x_{\ell'} a + \langle [\ell'], [\ell] \rangle [\ell] \otimes x_{\ell}x_{\ell'} a) \\
    &= (\delta_{\ell}-I)(h\otimes a) + \langle h, [\ell'] \rangle[\ell']\otimes x_{\ell'} a \\
    &=(\delta_{\ell} - I)(h\otimes a) + (\delta_{\ell'} - I)(h\otimes a). \qedhere
\end{align*}
\end{proof}

Next, define $\pB=\langle \delta_{\ell} \, :\, \ell \in \Lambda \rangle$, which is a free abelian subgroup of $\Aut(H_R)$ whose rank is equal to the number of nonseparating loops in $\Lambda$. Recall that $Y=\Span_{\Z}\{[\ell] \, : \, \ell \in \Lambda\}$, which  is a Lagrangian subspace of $H$. The following is a straight-forward consequence of Equation \eqref{eq:deltaell} and Proposition \ref{prop:prod2sum}.

\begin{proposition}
\label{prop:fminusI}
For any $f\in \pB$, we have
\begin{equation*}
    \text{(1)} \;\; (f-I)(H_R) \subset Y_R, \hspace{20pt} \text{(2)} \;\; (f-I)(Y_R) = 0, \hspace{20pt} \text{(3)} \;\; (f-I)^2(H_R) = 0.
\end{equation*}
\end{proposition}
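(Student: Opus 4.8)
The plan is to reduce everything to a single generator $\delta_\ell$ by means of Proposition~\ref{prop:prod2sum}, and then to perform the three verifications directly from the defining formula \eqref{eq:deltaell}. Write $f = \prod_{\ell\in\Lambda}\delta_\ell^{a_\ell}$ for suitable integers $a_\ell$; by Proposition~\ref{prop:prod2sum} we have $f - I = \sum_{\ell\in\Lambda} a_\ell(\delta_\ell - I)$, so it suffices to establish each of (1)--(3) with $f$ replaced by a single $\delta_\ell$ (for (3) one must be a little more careful — see below).

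For (1): from \eqref{eq:deltaell}, $(\delta_\ell - I)(h\otimes a) = \bk{h}{[\ell]}\,[\ell]\otimes x_\ell a$, which visibly lies in $Y_R$ since $[\ell]\in Y$. Summing over $\ell$ keeps us in $Y_R$, giving (1) for all $f\in\pB$. For (2): it is enough to check on simple tensors $[\ell']\otimes a$ with $\ell'\in\Lambda$, and $(\delta_\ell - I)([\ell']\otimes a) = \bk{[\ell']}{[\ell]}\,[\ell]\otimes x_\ell a = 0$ because $Y$ is Lagrangian, so the curves in $\Lambda$ pairwise have zero intersection number (and $\bk{[\ell]}{[\ell]}=0$). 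Hence each $\delta_\ell - I$ kills $Y_R$, and so does every $f - I$ by the displayed sum.

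For (3): using (1) and (2) together, $(f-I)(H_R)\subset Y_R$ and $(f-I)|_{Y_R}=0$, so $(f-I)^2(H_R) = (f-I)\big((f-I)(H_R)\big)\subset (f-I)(Y_R)=0$. This is the cleanest route and sidesteps expanding $(f-I)^2$ as a double sum.

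The only point requiring genuine care — the ``main obstacle,'' such as it is — is bookkeeping the $\pB$-vs-single-$\delta_\ell$ reduction cleanly: one should note at the outset that every element of $\pB$ has the form $\prod \delta_\ell^{a_\ell}$ (commutativity of the $\delta_\ell$'s was observed just before Proposition~\ref{prop:prod2sum}), invoke Proposition~\ref{prop:prod2sum} once, and then the three claims all follow from the one-line formula $(\delta_\ell - I)(h\otimes a)=\bk{h}{[\ell]}[\ell]\otimes x_\ell a$ together with the Lagrangian condition $\bk{[\ell]}{[\ell']}=0$ for $\ell,\ell'\in\Lambda$. Everything else is a routine linearity check over $R$.
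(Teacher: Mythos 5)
Your proof is correct and follows exactly the route the paper intends: the paper states this proposition without proof as ``a straight-forward consequence of Equation \eqref{eq:deltaell} and Proposition \ref{prop:prod2sum},'' and your argument---reduce to a single $\delta_\ell$ via Proposition \ref{prop:prod2sum}, read off (1) and (2) from the formula $(\delta_\ell-I)(h\otimes a)=\bk{h}{[\ell]}[\ell]\otimes x_\ell a$ together with the Lagrangian condition, and obtain (3) by composing (1) with (2)---is precisely that consequence spelled out. No gaps.
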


Denote by $\delta_{\Lambda}:H_R \to H_R$ the map
\begin{equation*}
    \delta_{\Lambda} = \prod_{\ell \in \Lambda} \delta_{\ell}.
\end{equation*}
By Proposition \ref{prop:prod2sum}, as a map $H_R\to H_R$, we have
\begin{equation*}
    \delta_{\Lambda}-I = \sum_{\ell \in \Lambda} (\delta_{\ell}-I).
\end{equation*}

\subsection{The action of $\delta_{\Lambda}$ on $L_R$}\label{subsec:action-on-L_R}

The third exterior power of $\delta_{\Lambda}$ is a $R$-module homomorphism $\wedge^3(H_R) \to \wedge^3(H_R)$. However, the natural $R$-module homomorphism 
\begin{equation*}
 \wedge^3(H_R) \to L_R \hspace{20pt} (h_1\otimes f_1) \wedge (h_2\otimes f_2) \wedge (h_3\otimes f_3) \mapsto (h_1\wedge h_2 \wedge h_3) \otimes (f_1f_2f_3). 
\end{equation*}
is an isomorphism; thus we may view the third exterior power of $\delta_{\Lambda}$ as an $R$-module endomorphism of $L_R$ which we will still denote by $\delta_\Lambda$. Next, we show that $\delta_{\Lambda}$ is an endomorphism of $L_R/H_R$, but we need the following more general setup. 

Let $\iota: S\hookrightarrow \Sigma_g$ be a subsurface (possibly with boundary) of $\Sigma_g$. The homology of $S$ splits as $H_{1}(S,\Z) \cong V \oplus W$ where  $V$ is the subgroup of $H$ generated by the boundary curves of $S$ and $W$ is a symplectic subspace of $H$ whose symplectic form, which we denote by $\omega_S \in \wedge^2W$, is obtained by restricting the intersection of $\Sigma_g$ to $S$. Let  $\pB_S$ be the subgroup of $\pB$ generated by the $\delta_{\ell}$ such that $\ell$ is isotopic to a curve in $S$. 

\begin{proposition}
\label{prop:BonOmega}
If $f\in \pB_S$, then 
\begin{enumerate}[noitemsep]
\item $f(\omega_S) \equiv \omega_S \mod (V \wedge W)_R$;
\item $(f-I)(h\wedge\omega_S) = (f-I)(h) \wedge \omega_S + f(h) \wedge \eta\ $ for some $\ \eta \in (V\wedge W)_R$.
\end{enumerate}
\end{proposition}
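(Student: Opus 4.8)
The plan is to reduce everything to the additive identity $f - I = \sum_\ell a_\ell(\delta_\ell - I)$ on $H_R$ supplied by Proposition~\ref{prop:prod2sum} (writing $f = \prod_\ell \delta_\ell^{a_\ell}$, the product ranging over the curves $\ell$ isotopic into $S$), together with the multiplicativity $f(x\wedge y)=f(x)\wedge f(y)$ of $f$ on exterior powers. First I would set $g = f - I\colon H_R \to H_R$; by that identity and \eqref{eq:deltaell} one has $g(h) = \sum_\ell a_\ell x_\ell \langle h,[\ell]\rangle [\ell]$, so $\image(g)$ is spanned by the $[\ell]$ with $\ell\subset S$ and hence lies in $H_1(S)_R = V_R \oplus W_R$. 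Since a boundary curve of $S$ can be pushed off $S$ and so is disjoint from every $\ell$ interior to $S$, we get $\langle v,[\ell]\rangle = 0$ for $v\in V$; thus $g(V_R)=0$, $f$ fixes $V_R$ pointwise, and $V$ pairs trivially with all of $H_1(S)$. Write $[\ell] = v_\ell + w_\ell$ with $v_\ell\in V$, $w_\ell\in W$.

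For part (1), I would fix a symplectic basis $\{a_j,b_j\}$ of $W$, so $\omega_S = \sum_j a_j\wedge b_j$, and expand
\[
f(\omega_S) = \sum_j f(a_j)\wedge f(b_j) = \omega_S + \sum_j\bigl(g(a_j)\wedge b_j + a_j\wedge g(b_j)\bigr) + \sum_j g(a_j)\wedge g(b_j),
\]
so it suffices to put the first correction in $(V\wedge W)_R$ and kill the second. For the first sum, substituting $g$ and using $\langle a_j,[\ell]\rangle=\langle a_j,w_\ell\rangle$, $\langle b_j,[\ell]\rangle=\langle b_j,w_\ell\rangle$ (as $V\perp H_1(S)$) together with the identity $\sum_j(\langle a_j,u\rangle b_j - \langle b_j,u\rangle a_j)=u$ for $u\in W$, one finds
\[
\sum_j\bigl(g(a_j)\wedge b_j + a_j\wedge g(b_j)\bigr) = \sum_\ell a_\ell x_\ell\,[\ell]\wedge w_\ell = \sum_\ell a_\ell x_\ell\, v_\ell\wedge w_\ell \in (V\wedge W)_R
\]
since $w_\ell\wedge w_\ell = 0$. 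For the quadratic term, $\sum_j g(a_j)\wedge g(b_j) = \sum_{\ell,\ell'} a_\ell a_{\ell'} x_\ell x_{\ell'}\bigl(\sum_j\langle a_j,[\ell]\rangle\langle b_j,[\ell']\rangle\bigr)[\ell]\wedge[\ell']$; because $[\ell]\wedge[\ell']$ is alternating only the antisymmetrization of the inner sum contributes, and the symplectic identity $\sum_j(\langle a_j,u\rangle\langle b_j,u'\rangle - \langle a_j,u'\rangle\langle b_j,u\rangle) = \langle u,u'\rangle$ rewrites it as $\langle w_\ell,w_{\ell'}\rangle$. Finally $\langle w_\ell,w_{\ell'}\rangle = \langle[\ell],[\ell']\rangle = 0$ because $\Lambda$ is a disjoint arrangement and $V\perp H_1(S)$; so the quadratic term vanishes. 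This proves (1) with the explicit choice $\eta = \sum_\ell a_\ell x_\ell\, v_\ell\wedge w_\ell$. (Conceptually, $\sum_j g(a_j)\wedge g(b_j) = (\wedge^2 g)(\omega_S)$ is $0$ because $g$ is self-adjoint for the intersection form and $g^2=0$ by Proposition~\ref{prop:fminusI}(3), so it corresponds to $g\circ g$ under $\wedge^2 W\cong\End(W)$.)

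For part (2), which is then purely formal, I would take $\eta := f(\omega_S)-\omega_S \in (V\wedge W)_R$ from (1) and compute
\[
(f-I)(h\wedge\omega_S) = f(h)\wedge f(\omega_S) - h\wedge\omega_S = \bigl(h + (f-I)h\bigr)\wedge(\omega_S+\eta) - h\wedge\omega_S;
\]
the $h\wedge\omega_S$ cancels and the rest regroups as $(f-I)(h)\wedge\omega_S + \bigl(h+(f-I)h\bigr)\wedge\eta = (f-I)(h)\wedge\omega_S + f(h)\wedge\eta$, as claimed.

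The main obstacle is part (1): one must check that $f(\omega_S)$ agrees with $\omega_S$ modulo $(V\wedge W)_R$, i.e. has no spurious component in $\wedge^2 W$ (nor in $\wedge^2 V$), and this forces simultaneous use of the disjointness of $\Lambda$ (yielding $\langle[\ell],[\ell']\rangle=0$), the orthogonality of the boundary classes of $S$ to $H_1(S)$, and the two contraction identities for a symplectic basis; once those are in hand, part (2) follows from the Leibniz expansion above.
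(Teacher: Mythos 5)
Your proof is correct and follows essentially the same route as the paper: expand $f(\omega_S)$ in a symplectic basis of $W$, use the contraction identity to identify the linear correction with $\sum_\ell a_\ell x_\ell\, v_\ell\wedge w_\ell \in (V\wedge W)_R$, and then deduce (2) by the same two-line Leibniz computation. The only cosmetic difference is that the paper reduces to a single factor $\delta_\ell$ (where the quadratic term dies because $[\ell]\wedge[\ell]=0$) and iterates, whereas you treat the whole product $f-I=\sum_\ell a_\ell(\delta_\ell-I)$ at once and kill the quadratic term using $\langle[\ell],[\ell']\rangle=0$; both are valid.
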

\noindent As a consequence, if $S=\Sigma_g$ and $f\in \pB$, then $(f-I)$, as an endomorphism $L_R\to L_R$, takes $H_R$ to $H_R$ (as an $R$-submodule of $L_R$). In particular, $\delta_\Lambda-I$ may be regarded as an $R$-module endomorphism of $L_R/H_R$. 

\begin{proof}
For (1), it suffices to show that 
\begin{equation}
\label{eq:deltaeSubsurface}
    \delta_{\ell}(\omega_{S}) \equiv \omega_S  \mod (V\wedge W)_{R}.
\end{equation}
Choose a symplectic basis $\alpha_1,\ldots,\alpha_g$, $\beta_1,\ldots,\beta_g$ of $H$ such that $\alpha_1,\ldots,\alpha_h,\beta_1,\ldots,\beta_h$ is a symplectic basis of $W$. This means that 
\begin{equation*}
    \omega_S = \sum_{i=1}^{h} \alpha_i \wedge \beta_i.
\end{equation*}
Write $[\ell] = \mu + \lambda$ where $\mu \in V$ and $\lambda \in W$. We have
\begin{equation*}
    \lambda = \sum_{i=1}^h \left( - \bk{\beta_i}{[\ell]} \alpha_i + \bk{\alpha_i}{[\ell]} \beta_i \right).
\end{equation*}
We compute
\begin{align*}
    \delta_{\ell}(\omega_S) &= \sum_{i=1}^{h} (\alpha_i + \bk{\alpha_i}{[\ell]} [\ell] \otimes x_{\ell} ) \wedge (\beta_i + \bk{\beta_i}{[\ell]} [\ell] \otimes x_{\ell} ) \\
    &= \omega_S + \sum_{i=1}^{h}\left(  \bk{\beta_i}{[\ell]}  \alpha_i\wedge [\ell] + \bk{\alpha_i}{[\ell]}  [\ell] \wedge \beta_i  \right) \otimes x_{\ell} \\
    &= \omega_S + [\ell] \wedge \sum_{i=1}^{h} \left(  -\bk{\beta_i}{[\ell]}  \alpha_i + \bk{\alpha_i}{[\ell]}   \beta_i  \right) \otimes x_{\ell} \\
    & = \omega_S + \mu \wedge \lambda \otimes x_{\ell},
\end{align*}
from which Formula \eqref{eq:deltaeSubsurface} follows. For (2),  we have 
\begin{align*}
    (f-I)(h\wedge \omega_{S})   = f(h) \wedge f(\omega_{S}) - h\wedge \omega_{S} = (f-I)(h) \wedge \omega_{S} + f(h) \wedge \eta. 
\end{align*}
for some $\eta \in (V\wedge W)_R$.
\end{proof}

Similar to the integral setup in Section \ref{sec:tropicalCeresaClass}, we define a filtration on $ L_R$ and $L_R/H_R$:
\begin{align*}
    F_q L_R := (F_qL)_R \cong (\wedge^qY_R)\wedge (\wedge^{3-q}H_R) \hspace{20pt} \text{ and } \hspace{20pt}  F_q (L_R/H_R) := (F_q(L/H))_R.
\end{align*}
and denote by $\grF{q}{F}(L_R/H_R)$ the graded piece
\begin{equation*}
    \grF{q}{F}(L_R/H_R) := \frac{F_{q}(L_R/H_R)}{F_{q+1}(L_R/H_R)}.
\end{equation*}

\begin{proposition}
\label{prop:fMinusIOnL}
Given $f\in \pB$ and $h_1,h_2,h_3\in H$, we have that
{\footnotesize
\begin{align*}
    (f-I)(h_1\wedge h_2 \wedge h_3) & \; = \;   (f - I)h_1 \wedge h_2 \wedge h_3 + h_1 \wedge (f-I) h_2 \wedge h_3 + h_1 \wedge h_2 \wedge (f - I) h_3  \\
    & +\;  (f - I)h_1 \wedge (f - I) h_2 \wedge h_3 + (f - I) h_1 \wedge  h_2 \wedge (f - I) h_3  + h_1 \wedge (f - I) h_2 \wedge (f - I) h_3 \\
    & +\; (f - I)h_1 \wedge (f - I) h_2 \wedge (f - I) h_3
\end{align*}
}
In particular, we have that $(f-I)(F_{q}(L_R)) \subset F_{q+1}(L_R)$.
\end{proposition}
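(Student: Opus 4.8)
The plan is to prove the displayed expansion formula by a direct multilinear computation, then read off the filtration statement as an immediate corollary. First I would fix $f\in\pB$ and write, for each $i$, $f(h_i) = h_i + (f-I)(h_i)$. Since $f$ acts on $L_R = \wedge^3 H_R$ as the third exterior power of its action on $H_R$, we have
\begin{equation*}
    f(h_1\wedge h_2\wedge h_3) = f(h_1)\wedge f(h_2)\wedge f(h_3) = \bigl(h_1 + (f-I)h_1\bigr)\wedge\bigl(h_2 + (f-I)h_2\bigr)\wedge\bigl(h_3 + (f-I)h_3\bigr).
\end{equation*}
Expanding the wedge product of three binomials gives $2^3 = 8$ terms; subtracting the single term $h_1\wedge h_2\wedge h_3$ leaves the $7$ terms displayed in the statement, grouped by how many factors carry a $(f-I)$. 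This is the whole content of the identity — it is a formal consequence of multilinearity of $\wedge$ and the definition of the third exterior power map, with no input needed beyond the elementary algebra of $f = I + (f-I)$.

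For the filtration claim, I would invoke Proposition \ref{prop:fminusI}(1): $(f-I)(H_R)\subset Y_R$. Fix $h_1\wedge h_2\wedge h_3 \in F_q(L_R)$, which I may take to be a simple wedge with (at least) $q$ of the three factors lying in $Y_R$. In each of the seven terms of the expansion, replacing a factor $h_i$ by $(f-I)h_i$ moves that slot into $Y_R$; since every one of the seven terms applies $(f-I)$ to at least one slot, each term has at least $q+1$ factors in $Y_R$, hence lies in $F_{q+1}(L_R)$. (Here I use that $F_q L_R = (\wedge^q Y_R)\wedge(\wedge^{3-q}H_R)$ is spanned by such simple wedges, and that reordering wedge factors only changes signs, so it suffices to check generators.) Therefore $(f-I)(h_1\wedge h_2\wedge h_3)\in F_{q+1}(L_R)$, and by $R$-linearity $(f-I)(F_q(L_R))\subset F_{q+1}(L_R)$.

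I do not anticipate a genuine obstacle here; the only point requiring a little care is the bookkeeping that a spanning set of $F_q(L_R)$ consists of simple wedges with a prescribed number of $Y_R$-factors, together with the observation that the seven summands are precisely the ``at least one $(f-I)$'' terms — so each inevitably gains an extra $Y_R$-factor. If one wanted to be fully rigorous about the case $q=3$ (where $F_4(L_R)=0$), note that then all three $h_i$ are already in $Y_R$, so $(f-I)h_i \in Y_R$ as well and every factor of every term lies in $Y_R$; since $\wedge^4 Y_R$ sits inside $\wedge^4 H_R$ which vanishes on a rank-$2g$ module only after we note $\dim Y = g$ — more simply, $\wedge^{q+1}Y_R = \wedge^4 Y_R = 0$ because we can always write things in terms of at most $3$ wedge slots, so the claim $(f-I)(F_3 L_R)\subset F_4 L_R = 0$ is consistent with Proposition \ref{prop:fminusI}(3), $(f-I)^2(H_R)=0$, which forces the relevant combinations to cancel. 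In the write-up I would simply state the generator reduction and the slot-counting argument and leave these degenerate consistency checks implicit.
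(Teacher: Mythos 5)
Your expansion of the identity is exactly right and is surely what the authors intend (the paper states this proposition without proof): write $f(h_i)=h_i+(f-I)h_i$, take the third exterior power, expand the product of three binomials into $2^3$ terms, and cancel $h_1\wedge h_2\wedge h_3$; the remaining seven terms are the displayed ones. Nothing more is needed for the identity.

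The filtration argument, however, has a small but real slip in the slot-counting. Your claim that ``each term has at least $q+1$ factors in $Y_R$'' is false as stated: if $(f-I)$ is applied to a slot that already lies in $Y_R$, the count of $Y_R$-slots does not increase. Concretely, take $q=1$ with $h_1\in Y_R$ and $h_2,h_3\in H_R$; the term $(f-I)h_1\wedge h_2\wedge h_3$ has, by pure counting, only one $Y_R$-factor. The repair is one line, using the part of Proposition \ref{prop:fminusI} you never invoke, namely (2): $(f-I)(Y_R)=0$. Thus every term in which $(f-I)$ hits a $Y_R$-slot vanishes outright, and in every surviving term $(f-I)$ is applied only to slots outside the chosen $q$ factors in $Y_R$, so by part (1) the number of $Y_R$-factors genuinely increases by at least one and the term lies in $F_{q+1}(L_R)$. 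Your aside about $q=3$ is likewise garbled (the relevant fact is again part (2), not part (3), and $\wedge^4$ of a rank-$3$ wedge is not the right framing): since $F_3L_R=\wedge^3 Y_R$, every one of the seven terms contains a factor $(f-I)h_i$ with $h_i\in Y_R$ and hence vanishes, so $(f-I)(F_3L_R)=0$ directly. With these corrections the proof is complete and matches the (implicit) intended argument.
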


As a consequence of this proposition, the map $\delta_{\Lambda} -I$ on $L_R/H_R$ takes $F_{q}(L_R/H_R)$ to $F_{q+1}(L_R/H_R)$, and hence induces a map on graded components:
\begin{equation*}
    \delta_{\Lambda}-I:\grF{q}{F}(L_R/H_R) \to \grF{q+1}{F}(L_R/H_R).
\end{equation*}

\begin{proposition}
As maps $\grF{q}{F}(L_R/H_R) \to \grF{q+1}{F}(L_R/H_R)$, we have
\begin{equation*}
    (\delta_{\Lambda}-I)(\alpha) = \sum_{\ell\in \Lambda} (\delta_{\ell}-I)(\alpha).
\end{equation*}
\end{proposition}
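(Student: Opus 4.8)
The plan is to deduce this from Proposition~\ref{prop:prod2sum} by passing to the associated graded, keeping careful track of the fact that the terms dropped in the passage to $\grF{q+1}{F}$ are precisely the ``higher-order'' corrections. First I would recall from Proposition~\ref{prop:prod2sum} that, as a map $H_R\to H_R$, one has the \emph{exact} identity $\delta_\Lambda - I = \sum_{\ell\in\Lambda}(\delta_\ell - I)$; the content here is that the analogous identity holds on $\grF{q}{F}(L_R/H_R)$ even though it fails on $L_R$ itself (because of the cross-terms in Proposition~\ref{prop:fMinusIOnL}). So the real claim is that those cross-terms all land in $F_{q+2}(L_R)$, hence vanish in the graded piece.

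Concretely, I would argue as follows. Fix a representative $h_1\wedge h_2\wedge h_3 \in F_q(L_R)$, meaning (after possibly rewriting) we may take $h_1,\dots,h_q \in Y_R$ and $h_{q+1},\dots,h_3\in H_R$. Apply the seven-term expansion of Proposition~\ref{prop:fMinusIOnL} with $f = \delta_\Lambda$, and separately the sum over $\ell$ of the corresponding three-term linearizations $(\delta_\ell-I)h_1\wedge h_2\wedge h_3 + h_1\wedge(\delta_\ell-I)h_2\wedge h_3 + h_1\wedge h_2\wedge(\delta_\ell-I)h_3$. By Proposition~\ref{prop:fminusI}(1) each factor $(\delta_\Lambda-I)h_i$, and likewise each $(\delta_\ell-I)h_i$, lies in $Y_R$. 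Therefore every term in the seven-term expansion with \emph{two or more} $(\delta_\Lambda-I)$-factors has at least $q+2$ tensor slots in $Y_R$ (the original $q$ slots already in $Y_R$ contribute, and $(\delta_\Lambda-I)$ only improves a slot into $Y_R$), so it lies in $F_{q+2}(L_R)$ and dies in $\grF{q+1}{F}$. Thus modulo $F_{q+2}(L_R)$,
\begin{equation*}
    (\delta_\Lambda-I)(h_1\wedge h_2\wedge h_3) \equiv \sum_{i=1}^{3} h_1\wedge\cdots\wedge(\delta_\Lambda-I)h_i\wedge\cdots\wedge h_3,
\end{equation*}
and the same reasoning applied to each $\delta_\ell$ gives $(\delta_\ell-I)(h_1\wedge h_2\wedge h_3)\equiv \sum_i h_1\wedge\cdots\wedge(\delta_\ell-I)h_i\wedge\cdots\wedge h_3$ modulo $F_{q+2}(L_R)$. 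Summing the latter over $\ell$ and using the \emph{linear} identity $\sum_\ell(\delta_\ell-I)h_i = (\delta_\Lambda-I)h_i$ on $H_R$ (Proposition~\ref{prop:prod2sum}) term-by-term finishes the identification on $\grF{q}{F}(L_R/H_R)\to\grF{q+1}{F}(L_R/H_R)$. One should also note the argument descends from $L_R$ to $L_R/H_R$ because, by the consequence of Proposition~\ref{prop:BonOmega}, both $\delta_\Lambda - I$ and each $\delta_\ell - I$ carry $H_R$ into $H_R$, so the induced maps on the quotient and its graded pieces are well-defined, and the congruence above is compatible with the quotient.

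The one genuine subtlety — and the step I expect to require the most care — is the bookkeeping in the assertion that a term with two $(\delta_\Lambda-I)$-factors lies in $F_{q+2}(L_R)$ rather than merely $F_{q+1}(L_R)$. The point is that $F_q(L_R)$ is spanned by wedges with $q$ chosen factors in $Y_R$, and replacing a factor $h_i$ (which may or may not already have been one of the chosen $Y_R$-slots) by $(\delta_\Lambda-I)h_i\in Y_R$ can only increase the count of $Y_R$-slots; applying this to two distinct indices increases the count by two \emph{unless} both indices were already among the $q$ chosen ones, in which case the term is actually $0$ because it would require a factor like $(\delta_\Lambda-I)h_i$ together with the original $h_i\in Y_R$ — but in fact even then $(\delta_\Lambda - I)h_i \wedge (\cdots) \wedge (\delta_\Lambda-I) h_j$ with $h_i,h_j\in Y_R$ is in $\wedge^{\geq 2} Y_R \wedge \cdots \subset F_{q+2}$ regardless, so the conclusion holds uniformly. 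Making this precise just amounts to observing that for any $h\in H_R$ we have $(\delta_\Lambda - I)h \in Y_R$ and $h\wedge (\text{anything}) \in F_q$ iff it has $q$ slots in $Y_R$; I would state this as a short sublemma and then the vanishing of all multi-factor terms in $\grF{q+1}{F}$ is immediate.
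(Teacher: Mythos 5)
Your proof is correct and takes essentially the same route as the paper: reduce modulo the next filtration step to the three single-slot terms of Proposition \ref{prop:fMinusIOnL}, then apply the linear identity of Proposition \ref{prop:prod2sum} in each slot. Your extra bookkeeping showing that every term with two or more $(\delta_\Lambda-I)$-factors lies in $F_{q+2}(L_R)$ is precisely the content the paper delegates to the last assertion of Proposition \ref{prop:fMinusIOnL}, so the two arguments coincide.
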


\begin{proof}
By Proposition \ref{prop:fMinusIOnL}, we have that 
{\footnotesize
\begin{equation*}
    (\delta_{\Lambda}-I)(h_1\wedge h_2 \wedge h_3) \equiv (\delta_{\Lambda}-I)h_1 \wedge h_2 \wedge h_3 + h_1 \wedge (\delta_{\Lambda}-I) h_2 \wedge h_3 + h_1 \wedge h_2 \wedge (\delta_{\Lambda}-I) h_3 \mod F_{q}(L_R/H_R). 
 \end{equation*}
 }
 Now apply Proposition \ref{prop:prod2sum}.
 \end{proof}

In summary, the maps $\delta_{\Lambda}-I$ and $\sum_{\ell \in \Lambda}(\delta_{\ell}-I)$ coincide when viewed as maps on $H_R$ or the graded components $\gr_{q}^F(L_R/H_R)$, but are different when viewed on the whole $L_R/H_R$.

\subsection{Configuration of loops from graphs}
\label{sec:graphs} 

Let $G$ be a connected genus-$g$ stable graph and $\Lambda = \{\ell_e \, : \, e\in E(G)\}$ a collection of pairwise disjoint isotopy classes of simple closed curves on $\Sigma_g$ whose dual graph is $G$, as in \S \ref{subsec:multitwists}. The polynomial ring 
\begin{equation*}
    R[G] = \Z[x_e \, : \, e\in E(G)]
\end{equation*}
is identified with $R[\Lambda]$ by the relabeling $x_{e} = x_{\ell_e}$. We simply write $R$ when the graph $G$ is clear from context.  
Define the endomorphism $\delta_{G}$ on $H_{R}$ (respectively, $L_R$ and $L_{R}/H_{R}$) by  $\delta_{G} = \delta_{\Lambda}$. The map $\delta_G$  is closely related to the polarization of the Jacobian of a tropical curve, as we see in the following construction. 

Enumerate the edges of $G$ by $e_1,\ldots,e_n$ so that $e_{g+1},\ldots,e_{n}$ are the edges of a spanning tree $T$. Fix an arbitrary orientation on $G$.  For $j=1,\ldots, g$, the graph $T\cup \{e_j\}$ contains a unique cycle $\gamma_j$; orient $\gamma_j$ in the direction of $e_j$. The cycles $\gamma_1,\ldots,\gamma_g$ form a basis of $H_1(G,\Z)$. Viewing $G$ as a 1-dimensional CW-complex, there is an embedding $\iota: G\hookrightarrow \Sigma_g$ so that $\iota(e_j) \cap \ell_j$ is a point. 

Orient $\ell_i$ so that the subsurface corresponding to the target of $e_i$ lies to the left of $\ell_i$, see Example \ref{ex:K4}. For $j=1,\ldots,g$, denote by $\alpha_j$ and $\beta_j$ the homology classes $-[\iota(\gamma_j)]$ and $[\ell_j]$, respectively; the orientations are chosen so that the signed intersection number of $\alpha_i$ with $\beta_i$ is 1. Then $\alpha_1,\ldots,\alpha_g,\beta_1,\ldots,
\beta_g$ is a symplectic basis for $H$.

With respect to this basis, we have
\begin{equation}
\label{eq:deltaQ}
    \delta_{G} =
    \begin{pmatrix} 
    I & 0 \\
    Q_{G} & I
    \end{pmatrix}.
\end{equation}
This allows us to view $Q_G$ as a map $Y^{\perp}_{R} \to Y_R$ where 
\begin{equation*}
    Y=\Span_{\Z}\{[\ell] \, : \, \ell \in \Lambda\} = \Span_{\Z}\{\beta_1,\ldots,\beta_g\}, \hspace{10pt} \text{ and } \hspace{10pt} Y^{\perp}=\Span_{\Z}\{\alpha_1,\ldots,\alpha_g\}
\end{equation*}
In particular, we have
\begin{equation*}
    Q_{G}(\alpha_j) = \sum_{i=1}^g q_{ij} \, \beta_i, \hspace{10pt}
    \delta_{G}(\alpha_j) = \alpha_j + Q_{G}(\alpha_j), \hspace{10pt} \text{ and } \hspace{10pt} \delta_{G}(\beta_j) = \beta_j.
\end{equation*}
Combining this with Proposition \ref{prop:fMinusIOnL}, we deduce the following identities:
{\small
\begin{equation}
\label{eq:deltaGQGformulas}
    \begin{array}{l}
      (\delta_G-I)(\aab{i}{j}{k}) = Q_G(\alpha_i) \wedge \alpha_j \wedge \beta_k + \alpha_i \wedge Q_G(\alpha_j) \wedge \beta_k +  Q_G(\alpha_i) \wedge Q_G(\alpha_j) \wedge \beta_k, \\
     (\delta_G-I)(\abb{i}{j}{k}) = Q_G(\alpha_i) \wedge \beta_j \wedge \beta_k, \\
     (\delta_G-I)^2(\aab{i}{j}{k}) = 2\, Q_G(\alpha_i) \wedge Q_G(\alpha_j) \wedge \beta_k.
    \end{array}
\end{equation}
}

\noindent The matrix $Q_{G}$ specializes to the polarization matrix $Q_{\Gamma}$ for a tropical curve $\Gamma=(G,c)$ in that, if $R\to \Z$ is the evaluation homomorphism $x_{e} \mapsto c(e)$, then $Q_{\Gamma} = Q_G \otimes_R \id_{\Z}$.  

Let us describe the entries of the matrix $Q_{G}$ with respect to this basis; for details and examples, 
see \cite[\S 4]{BologneseBrandtChua}. 
Let $x_i = x_{e_i}$. Each $q_{ij}$ (the $ij$-th entry of $Q_{G}$) is a linear form in the $x_k$'s: $x_k$ appears with coefficient $1$ if $\gamma_i$ and $\gamma_j$ traverse $e_k$ in the same direction, $-1$ if they traverse in the opposite direction, and $0$ if they do not meet at $e_k$. 

\begin{remark}
\label{rmk:QGLinear}
Denote by $R_d \subset R$ the subspace of homogeneous degree $d$-forms in $R$.  
Because the entries of $Q_{G}$ are linear forms in $R$, the map $\delta_G-I$ may be viewed as a map $H_{R_d} \to H_{R_{d+1}}$. 
\end{remark}

\begin{example}
\label{ex:K4}
Consider $G=K_4$, the complete graph on $4$ vertices, with the orientation as in Figure \ref{fig:k4toSurface}. To its right is an arrangement of curves on $\Sigma_3$ whose dual graph is $K_4$. Observe that each $\ell_i$ is oriented so that the subsurface corresponding to the target of $e_i$ lies to the left of $\ell_i$. Also observe that $\alpha_i$ is oriented so that the signed intersection product of $\alpha_i$ and $\beta_i:=[\ell_i]$ is 1. The matrix $Q_G$ is
\begin{equation}
\label{eq:QK4}
    Q_{G} = \begin{pmatrix}
    x_1 + x_5 + x_6 & -x_6 & -x_5 \\
    -x_6 & x_2+x_4+x_6 & -x_4 \\
    -x_5 & -x_4 & x_3+x_4+x_5
    \end{pmatrix}.
\end{equation}

\end{example}

\begin{figure}
    \centering
    \includegraphics[width=0.8\textwidth]{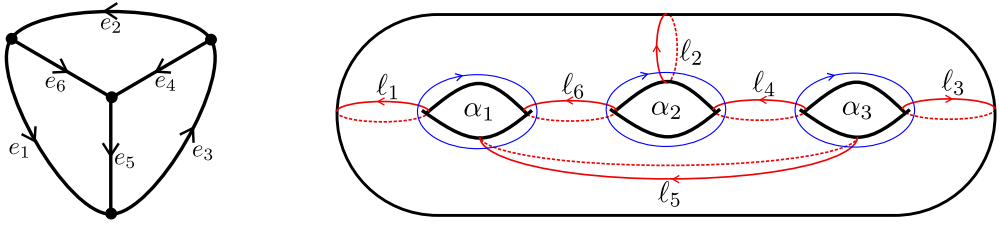}
    \caption{An arrangement of curves on $\Sigma_3$ with dual graph  $K_4$}
    \label{fig:k4toSurface}
\end{figure}

\section{The Ceresa-Zharkov class}
\label{sec:CeresaZharkov}

\subsection{Ceresa-Zharkov triviality}
\label{sec:CeresaZharkovTriviality}

Recall that we view $\Sigma_g^1$ as a subsurface of $\Sigma_g$ obtained by removing a small open disc $D$ from $\Sigma_g$.  
Recall that a hyperelliptic quasi-involution is an element of $\Mod(\Sigma_g^1)$ or $\Mod(\Sigma_g)$ whose action on $H = H_1(\Sigma_g^1,\Z) \cong H_{1}(\Sigma_g,\Z)$ is $-I$. 
Let $\tau'$ be a hyperelliptic quasi-involution on $\Sigma_g^{1}$ and $\tau$ its image under the natural map $\Mod(\Sigma_g^1) \to \Mod(\Sigma_g)$. 
Let $\Lambda'$ be a finite collection of pairwise disjoint isotopy classes of simple closed curves on $\Sigma_g^1$ and $\Lambda$ the image of this collection under the inclusion $\Sigma_g^1\hookrightarrow \Sigma_g$; note that any finite collection of simple closed curves can be isotoped away from the disc $D$, so $\Lambda$ may represent any finite collection of isotopy classes of simple closed curves on $\Sigma_g$. 

 The \textit{$\tau'$-Ceresa cocycle} of $\Lambda'$, respectively the \textit{$\tau$-Ceresa cocycle} of $\Lambda$, is defined as
\begin{equation*}
    \bmu_{\tau'}(\Lambda') = \sum_{\ell \in \Lambda'} J([T_{\ell},\tau']) \otimes x_{\ell} \in L_R, \hspace{5pt} \text{respectively,} \hspace{5pt} \bv_{\tau}(\Lambda) = \sum_{\ell \in \Lambda} J([T_{\ell},\tau]) \otimes x_{\ell} \in L_R/H_R 
\end{equation*} 
where $J$ is the Johnson homomorphism, see \S \ref{sec:Johnson}.
Define the \textit{$\tau$-Ceresa-Zharkov cocycle} as 
\begin{equation*}
    \bw_{\tau}(\Lambda) = (\delta_{\Lambda}-I)(\bv_{\tau}(\Lambda)).
\end{equation*}
When $G$ is a genus-$g$ graph, we write $\bv_{\tau}(G) = \bv_{\tau}(\Lambda)$ and $\bw_{\tau}(G) = \bw_{\tau}(\Lambda)$ where $\Lambda$ is an arrangement of curves whose dual graph is $G$. 
We say that $G$ is \textit{Ceresa-Zharkov trivial}  if there is a $\tau$ such that $\bw_{\tau}(G) = 0$. While our primary interest is in $\bw_{\tau}(G)$, we occasionally need the class $\bmu_{\tau'}(\Lambda')$ in \S \ref{sec:MainTheorem}.

In practice, when we compute the tropical Ceresa class for a tropical curve $\Gamma$, we use a hyperelliptic involution $\tau$ such that $\nu_{\tau}(\Gamma)$ lies in $F_2(L/H)$ as in \S \ref{sec:tropicalCeresaClass}. 
We do something similar in the graph-theoretic case. As before, $R_d\subset R$ denotes the linear subspace of degree $d$ forms. 

\begin{proposition}
\label{prop:CeresaF2R1}
There exists a hyperelliptic quasi-involution $\tau'$ of $\Sigma_g^1$ such that 
\begin{equation*}
    \bmu_{\tau'}(\Lambda') \in F_2L \otimes R_1
\end{equation*}
In particular, $\bv_{\tau}(G)$ lies in $F_2(L/H) \otimes R_1$ and $\bw_{\tau}(G)$ lies in $F_3L \otimes R_2$ where $\tau$ is the image of $\tau'$ under $\Mod(\Sigma_g^1) \to \Mod(\Sigma_g)$. 
\end{proposition}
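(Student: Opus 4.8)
The plan is to produce an explicit hyperelliptic quasi-involution $\tau'$ for which the Ceresa cocycle has the required bidegree, by choosing $\tau'$ adapted to the Lagrangian arrangement $\Lambda'$. Recall that $\bmu_{\tau'}(\Lambda') = \sum_{\ell\in\Lambda'} J([T_\ell,\tau'])\otimes x_\ell$, so each summand is already homogeneous of degree $1$ in $R$; hence the degree claim ($\in \cdots \otimes R_1$) is automatic, and the real content is that each Johnson image $J([T_\ell,\tau'])$ lies in $F_2L = (\wedge^2 Y)\wedge H$. By \cite[Theorem~6.6]{CEL} such a $\tau'$ exists on $\Sigma_g$ with $\nu_{\tau}(\Gamma)\in F_2(L/H)$; I would instead invoke the finer statement that one can arrange $\tau'$ on $\Sigma_g^1$ so that $J([T_\ell,\tau']) \in F_2 L$ for \emph{every} $\ell\in\Lambda'$ simultaneously. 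The key point is that $\Lambda'$ is a \emph{fixed} finite Lagrangian arrangement, and the construction of $\tau'$ in \cite{CEL} (a suitable product of hyperelliptic involutions on the genus-$0$ components, glued along the curves of $\Lambda'$) can be performed relative to this arrangement; the Lagrangian $Y = \Span\{[\ell]:\ell\in\Lambda'\}$ then controls where the Johnson homomorphism lands.

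Concretely, I would argue as follows. First, fix the symplectic basis $\alpha_1,\dots,\alpha_g,\beta_1,\dots,\beta_g$ of $H$ coming from the arrangement as in \S\ref{sec:graphs}, so that $Y = \Span\{\beta_1,\dots,\beta_g\}$. Second, recall from \S\ref{sec:Johnson} that $J(T_{\ell_e} T_{\ell_{e'}}^{-1}) = \omega_S\wedge[\ell_e]$ when $(\ell_e,\ell_{e'})$ is a bounding pair cutting off a subsurface $S$; for a single twist $T_{\ell_e}$ about a nonseparating curve, $[T_{\ell_e},\tau']$ is a product of bounding-pair maps and separating twists, and the bounding pairs that arise have both curves isotopic to $\ell_e$ or to $\tau'(\ell_e)$. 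Third — and this is the crucial geometric input — choose $\tau'$ so that $\tau'(\ell_e)$ is again a curve in the span of $\Lambda'$ up to isotopy, equivalently so that $\tau'$ preserves the isotopy class of the arrangement $\Lambda'$ (this is exactly the role of the hyperelliptic quasi-involution: it acts as $-I$ on $H$ and can be taken to permute the Lagrangian curves). Then every bounding-pair contribution to $J([T_{\ell_e},\tau'])$ is of the form $\omega_S\wedge[\ell]$ with $[\ell]\in Y$ and $S$ a union of genus-$0$ components whose boundary lies in $\Lambda'$, so $\omega_S \in \wedge^2 Y$ (the restricted intersection form on a planar piece bounded by the $\ell$'s is a combination of $\beta_i\wedge\beta_j$... — here one uses that the components are genus $0$, so $\omega_S$ has no $\alpha$-terms). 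Hence $J([T_{\ell_e},\tau']) \in (\wedge^2 Y)\wedge H = F_2L$, and summing over $\ell_e\in\Lambda'$ with coefficients $x_{\ell_e}\in R_1$ gives $\bmu_{\tau'}(\Lambda')\in F_2 L\otimes R_1$.

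The statements about $\bv_\tau(G)$ and $\bw_\tau(G)$ then follow formally: $\bv_\tau(G)$ is the image of $\bmu_{\tau'}(\Lambda')$ under $L_R\to L_R/H_R$, which carries $F_2L\otimes R_1$ into $F_2(L/H)\otimes R_1$; and $\bw_\tau(G) = (\delta_G-I)(\bv_\tau(G))$, where by Proposition~\ref{prop:fMinusIOnL} the operator $\delta_G-I$ sends $F_2(L_R/H_R)$ into $F_3(L_R/H_R)$, and by Remark~\ref{rmk:QGLinear} it raises polynomial degree by exactly $1$, so it maps $F_2(L/H)\otimes R_1$ into $F_3(L/H)\otimes R_2 \cong F_3L\otimes R_2$ (using $F_3L\cap H = 0$). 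I expect the main obstacle to be the precise verification that a hyperelliptic quasi-involution $\tau'$ can be chosen to preserve the given Lagrangian arrangement $\Lambda'$ up to isotopy (rather than just some arrangement), together with pinning down exactly which bounding-pair maps appear in the factorization of $[T_{\ell_e},\tau']$ and checking that each contributes a term in $(\wedge^2 Y)\wedge H$; this is where one must be careful, and it may be cleanest to deduce it directly from the genus-$0$ structure of the complement $\Sigma_g\setminus\bigcup\ell$ rather than from a general position argument.
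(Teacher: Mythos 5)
Your overall route is the same as the paper's: the existence of $\tau'$ with each $J([T_{\ell},\tau'])\in F_2L$ is precisely what the paper takes from \cite[Theorem~6.6]{CEL} (the ``finer'' per-curve statement you say you would need is exactly what that theorem is cited for), and the ``in particular'' clause is the same degree bookkeeping via Remark~\ref{rmk:QGLinear} and Proposition~\ref{prop:fMinusIOnL}; the paper's entire proof is two sentences. Where you go beyond this --- the attempted re-derivation of the key input --- there is a wrong intermediate claim. For a bounding pair cutting off a subsurface $S$, the form $\omega_S$ is a nondegenerate symplectic form on the symplectic part of $H_1(S)$, so it cannot lie in $\wedge^2 Y$ for a Lagrangian $Y$ unless it vanishes; and the genus-$0$ property of the individual complementary components does not make $\omega_S$ free of $\alpha$-terms, since $S$ is a union of several such components and typically has positive genus. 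The correct (and sufficient) statement is the weaker one that $\omega_S$ can be written as $\sum_i \alpha_i\wedge\beta_i$ with each $\beta_i\in Y$, i.e.\ $\omega_S\in H\wedge Y$, whence $\omega_S\wedge[\ell]\in(\wedge^2Y)\wedge H=F_2L$ because $[\ell]\in Y$ supplies the second $Y$-factor --- this is how the paper argues in the analogous computation in Proposition~\ref{prop:tauSwapSeparatingPair}. Since the proposition only needs the citation of \cite[Theorem~6.6]{CEL}, your proof is fine if you drop or repair the sketch; as written, the sketch would not stand on its own.
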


\begin{proof}
By \cite[Theorem~6.6]{CEL}, there is a hyperelliptic quasi-involution $\tau'$ such that each $J([T_{\ell},\tau'])$ lies in $F_2L$. The proposition now follows from Remark \ref{rmk:QGLinear}.
\end{proof}

Here is a characterization for Ceresa-Zharkov triviality when we use such a $\tau$. 

\begin{proposition}
\label{prop:sufficientWCT}
If $G$ is Ceresa-Zharkov trivial and  $\bv_{\tau}(G) \in F_2(L/H) \otimes R_1$ then  
\begin{equation}\label{eq:wtau}
    \bw_{\tau}(G) \in  (\delta_{G}-I)^2(F_1(L/H)).
\end{equation}
Conversely, if there exists a $\tau$ such that equation \eqref{eq:wtau} holds, then $G$ is Ceresa-Zharkov trivial.
\end{proposition}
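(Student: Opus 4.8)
The plan is to unwind the definitions and track where the Ceresa-Zharkov cocycle lives. Recall $\bw_\tau(G) = (\delta_G - I)(\bv_\tau(G))$, and under the hypothesis $\bv_\tau(G) \in F_2(L/H)\otimes R_1$ its representative is $\bv_\tau(G) = \sum_{e} J([T_{\ell_e},\tau])\otimes x_e$. First I would recall the translation between the group cohomology language of \S\ref{sec:tropicalCeresaClass} and the explicit module language: the class $\nu(\Gamma)$ lies in $A(\delta_\Gamma) \cong (F_2L + H)/((\delta_\Gamma - I)(F_1 L) + H)$, and $G$ being Ceresa-Zharkov trivial means (for a suitable $\tau$) that the cocycle $\bw_\tau(G)$, which by Proposition \ref{prop:CeresaF2R1} lies in $F_3 L\otimes R_2$, vanishes in the appropriate cohomology group. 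So triviality should be read as: $\bw_\tau(G)$ is a coboundary, which — since we are taking $H^1$ of the cyclic group $\langle \delta_G\rangle$ acting on the relevant module — amounts to $\bw_\tau(G)$ lying in the image of $(\delta_G - I)$ on the next filtration piece. I would make this precise by working with the identification $C(\delta_G) \cong F_3 L / (\delta_G - I)^2(F_1 L)$ from the excerpt, extended $R$-linearly.

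The key computation is then: $\bw_\tau(G) = (\delta_G - I)(\bv_\tau(G))$ where $\bv_\tau(G) \in F_2(L/H)\otimes R_1$. Lift $\bv_\tau(G)$ to an element $\tilde v \in F_2 L \otimes R_1$ (this is possible by the explicit form of $F_2(L/H)$ in \eqref{eq:explicitFq}). Then $\bw_\tau(G) = (\delta_G - I)(\tilde v) \bmod (\delta_G - I)(H_R \cap \text{stuff})$; but by Proposition \ref{prop:BonOmega} the map $\delta_G - I$ on $L_R$ carries $H_R$ into $H_R$, and on $F_3 L \otimes R_2$ we have $F_3 L \cap H = 0$, so modding out by $H_R$ changes nothing in the target. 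Hence $\bw_\tau(G)$ is genuinely $(\delta_G - I)(\tilde v)$ for a lift $\tilde v \in F_2 L\otimes R_1$. Now $G$ Ceresa-Zharkov trivial means $\bw_\tau(G) = 0$ as an element of $C(\delta_G)\otimes R_2$, i.e., $(\delta_G - I)(\tilde v) \in (\delta_G - I)^2(F_1 L)\otimes R_2$ — here one uses that, by Remark \ref{rmk:QGLinear} and Proposition \ref{prop:fMinusIOnL}, $(\delta_G - I)$ raises polynomial degree by exactly one, so the degree-$2$ part of $(\delta_G - I)^2(F_1 L_R)$ is exactly $(\delta_G-I)^2(F_1 L\otimes R_0) = (\delta_G - I)^2(F_1 L)$. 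Writing $\tilde v - (\delta_G - I)(u) $ for the difference, for a suitable $u\in F_1 L$, I get $\bw_\tau(G) = (\delta_G - I)((\delta_G - I)(u)) = (\delta_G - I)^2(u) \in (\delta_G - I)^2(F_1(L/H))$ — using Proposition \ref{prop:fminusI}(3) or rather the degree bookkeeping to see the correction term $(\delta_G-I)(\tilde v - (\delta_G-I)u)$ is already zero in the quotient — which is exactly \eqref{eq:wtau}. For the converse, if \eqref{eq:wtau} holds, then $\bw_\tau(G) = (\delta_G - I)^2(u)$ for some $u \in F_1(L/H)$; but $(\delta_G - I)^2(u)$ is precisely a coboundary-type term that vanishes in $C(\delta_G)$ by definition of $C(\delta_G) \cong F_3 L/(\delta_G - I)^2(F_1 L)$, so $\bw_\tau(G) = 0$ there, i.e., $G$ is Ceresa-Zharkov trivial.

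More carefully, for the converse direction I would argue: Ceresa-Zharkov triviality as defined requires $\bw_\tau(G) = 0$ in $F_3 L \otimes R_2$ — but wait, re-examining the definition in \S\ref{sec:CeresaZharkovTriviality}, ``$G$ is Ceresa-Zharkov trivial if there is a $\tau$ such that $\bw_\tau(G) = 0$'' is an equality \emph{in $L_R/H_R$}, not in a quotient $C$. So I must reconcile: the content of Proposition \ref{prop:sufficientWCT} is that this honest vanishing is equivalent to the weaker-looking membership \eqref{eq:wtau}. The forward direction is immediate: if $\bw_\tau(G) = 0$ then trivially $0 \in (\delta_G - I)^2(F_1(L/H))$. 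The real content is the converse: given \eqref{eq:wtau} for some $\tau_0$, I must produce a \emph{possibly different} $\tau$ with $\bw_\tau(G) = 0$ outright. The mechanism is the freedom in the choice of hyperelliptic quasi-involution: by \cite[Proposition 2.1]{CEL} the cocycle class is independent of $\tau$, but the cocycle itself changes by a coboundary when $\tau$ is changed, and the coboundaries for the $\langle\delta_G\rangle$-action on $F_1 (L/H)\otimes R$ are exactly of the form $(\delta_G - I)(\text{something in } F_1)$. So I would show that modifying $\tau_0$ by an appropriate element (concretely, precomposing with a separating twist or bounding-pair map whose Johnson image is the needed correction term $-u \in F_1(L/H)$, scaled so degrees match) shifts $\bv_{\tau_0}(G)$ by $(\delta_G - I)(u')$ and hence shifts $\bw_{\tau_0}(G) = (\delta_G - I)^2(u)$ by $(\delta_G-I)^2(u')$, which can be arranged to cancel it exactly.

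\textbf{The main obstacle.} The genuinely delicate step is the converse: realizing the correction term living in $F_1(L/H)\otimes R$ (or its lift in $F_1 L$) as an actual change of hyperelliptic quasi-involution, compatibly with the polynomial-ring coefficients and with the degree-$1$ constraint $\bv_\tau(G)\in F_2(L/H)\otimes R_1$ of Proposition \ref{prop:CeresaF2R1}. One must check that the relevant coboundary subgroup for the $\langle \delta_G\rangle$-action (computed via Proposition \ref{prop:fminusI}, so that $(\delta_G - I)$ vanishes on $Y_R$ and squares to zero on $H_R$, making the $H^1$ computation essentially linear-algebraic) really is hit by variations of $\tau$, and that we never need to leave the ``degree $\leq 1$'' part of $R$. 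I expect that the degree bookkeeping — that $\delta_G - I$ is homogeneous of degree $+1$ by Remark \ref{rmk:QGLinear}, so all the filtration-and-degree conditions interlock cleanly — is what makes the argument go through, but verifying that the $\tau$-modification can be taken with the right $R_1$-degree weighting is the point requiring genuine care rather than bookkeeping.
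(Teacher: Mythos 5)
Your converse argument is essentially the paper's: change $\tau$ to $\tau t$ for a Torelli element $t$ with prescribed Johnson image (surjectivity of $J$), so that the resulting shift of $\bw_\tau(G)$ cancels it. But you have the difficulty located in the wrong direction, and there are two concrete problems. First, the forward direction is \emph{not} immediate. The hypothesis ``$G$ is Ceresa-Zharkov trivial'' is existential: some $\tau_0$ satisfies $\bw_{\tau_0}(G)=0$, while the conclusion concerns a possibly different $\tau$, namely one with $\bv_\tau(G)\in F_2(L/H)\otimes R_1$. Your ``if $\bw_\tau(G)=0$ then trivially $0\in(\delta_G-I)^2(F_1(L/H))$'' silently assumes these coincide. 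To transport from $\tau_0$ to $\tau$ you must use the same change-of-$\tau$ mechanism, which shows only that $\bw_\tau(G)$ lies in $\psi_G(L/H)$, where $\psi_G=(\delta_G-I)\circ\bigl(\sum_{e}(\delta_e-I)\bigr)$ and the correction $J(\tilde\tau^{-1}\tau)$ ranges over all of $L/H$, not just $F_1(L/H)$. One then still needs the identity
\[
\psi_G(L/H)\cap\bigl(F_3(L/H)\otimes R_2\bigr)=(\delta_G-I)^2\bigl(F_1(L/H)\bigr),
\]
which the paper proves by computing $\psi_G$ on every type of simple wedge ($\aaa{i}{j}{k}$, $\aab{i}{j}{k}$, etc.) and checking $(\delta_G-I)^2|_{F_1(L/H)}=\psi_G|_{F_1(L/H)}$. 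This computation is the actual content of the forward direction and is entirely absent from your proposal.

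Second, you repeatedly write the shift of $\bv_\tau(G)$ under $\tau\mapsto\tau t$ as $(\delta_G-I)(u')$. The correct formula is $\sum_{e\in E(G)}(\delta_e-I)(J(t))$, and the paper is at pains to point out (end of \S\ref{subsec:action-on-L_R}) that $\delta_G-I$ and $\sum_e(\delta_e-I)$ agree on $H_R$ and on the graded pieces $\gr^F_q$ but \emph{not} on $L_R/H_R$; this is exactly why $\psi_G$ is introduced rather than $(\delta_G-I)^2$. For the converse your error is harmless because the correction there lies in $F_1(L/H)$, where $\psi_G$ and $(\delta_G-I)^2$ do agree; but for the forward direction the correction is a general element of $L/H$, and identifying the two operators there is false (e.g.\ on $\aaa{i}{j}{k}$ they differ). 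So the proposal as written does not close the forward implication.
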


To prove this proposition, we first derive a characterization of  Ceresa-Zharkov triviality that works for \textit{any} $\tau$ but is more cumbersome to state due to the fact that the endomorphism  $\delta_G-I$ of $L_R/H_R$ does not split as the sum $\sum_{e\in E(G)}(\delta_e-I)$ as  discussed in \S \ref{subsec:action-on-L_R}. 

Denote by $\psi_G:L_R/H_R \to L_R/H_R$ the composition
\begin{equation*}
    \psi_G = (\delta_G-I) \circ \left(\sum_{e\in E(G) }(\delta_e-1) \right). 
\end{equation*}

\begin{lemma}
\label{lem:conditionsWCT}
The following are equivalent.
\begin{enumerate}
    \item The graph $G$ is Ceresa-Zharkov trivial;
    \item for every hyperelliptic quasi-involution $\tau$, we have  $\bw_{\tau}(G) \in \psi_G(L/H)$;
    \item there is a hyperelliptic quasi-involution $\tau$ such that $\bw_{\tau}(G) \in \psi_G(L/H)$.
\end{enumerate}
\end{lemma}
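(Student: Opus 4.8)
The plan is to establish the cycle of implications $(1)\Rightarrow(2)\Rightarrow(3)\Rightarrow(1)$, with $(2)\Rightarrow(3)$ being trivial once we know that hyperelliptic quasi-involutions exist (which they do, e.g. by \cite[Theorem~6.6]{CEL}). The crux is to understand how $\bw_\tau(G)$ changes as $\tau$ ranges over all hyperelliptic quasi-involutions, and to relate this ambiguity to the image of $\psi_G$. First I would recall that any two hyperelliptic quasi-involutions $\tau,\tau_0$ differ by an element of the Torelli group, since both act as $-I$ on $H$; writing $\tau = \phi\tau_0$ with $\phi\in\cI_g$ (or working with lifts in $\cI_g^1$ and the class $\bmu$, then pushing down), a cocycle computation with the crossed homomorphism $\gamma\mapsto J([\gamma,\tau])$ shows that $\bv_\tau(G)-\bv_{\tau_0}(G)$ lies in $(\delta_G-I)(L_R/H_R)$, in fact in the $R_1$-span of $(\delta_G-I)$ applied to a fixed element coming from $J(\phi)$. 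Applying $(\delta_G-I)$ once more, $\bw_\tau(G)-\bw_{\tau_0}(G)\in(\delta_G-I)^2(L_R/H_R)$.

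Next I would reconcile $(\delta_G-I)^2$ with $\psi_G = (\delta_G-I)\circ\bigl(\sum_{e}(\delta_e-I)\bigr)$. By Proposition~\ref{prop:prod2sum} the maps $\delta_G-I$ and $\sum_e(\delta_e-I)$ agree on $H_R$ and on each graded piece $\gr_q^F(L_R/H_R)$, though not on all of $L_R/H_R$. Since $(\delta_G-I)$ raises the filtration level by one (Proposition~\ref{prop:fMinusIOnL}), the composite $\psi_G$ and the composite $(\delta_G-I)^2$ both raise it by two; I would argue that on the relevant filtration quotients they coincide, and more precisely that $\psi_G(L_R/H_R) = (\delta_G-I)^2(L_R/H_R)$ as submodules — the point being that after the first application of $\sum_e(\delta_e-I)$ one lands in a higher filtration piece where the second factor $(\delta_G-I)$ can be analyzed gradedly. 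This identifies the ``ambiguity module'' for $\bw_\tau(G)$ with $\psi_G(L_R/H_R)$, which is exactly what is needed: if $\bw_{\tau_0}(G)=0$ for some $\tau_0$ (i.e. $G$ is Ceresa-Zharkov trivial), then for every $\tau$ we get $\bw_\tau(G)\in\psi_G(L_R/H_R)$, giving $(1)\Rightarrow(2)$; and conversely if $\bw_\tau(G)\in\psi_G(L_R/H_R)$ for some $\tau$, then $\bw_\tau(G)$ differs from some $\bw_{\tau'}(G)$ by an element of $\psi_G(L_R/H_R)=(\delta_G-I)^2(L_R/H_R)$, and we can choose $\tau'$ (again modifying $\tau$ by a suitable Torelli element realizing the required correction via the Johnson homomorphism) with $\bw_{\tau'}(G)=0$, giving $(3)\Rightarrow(1)$.

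The main obstacle I anticipate is the surjectivity/realizability step in $(3)\Rightarrow(1)$: knowing that $\bw_\tau(G)$ sits in the correct ambiguity submodule is not automatically enough to produce a genuine $\tau'$ with vanishing cocycle — one must check that every element of $\psi_G(L_R/H_R)$ (equivalently $(\delta_G-I)^2(F_1(L/H)\otimes R_1)$, after incorporating Proposition~\ref{prop:CeresaF2R1}) is actually realized as $\bw_\tau(G)-\bw_{\tau'}(G)$ for an allowable change of quasi-involution. This requires knowing that the Johnson homomorphism $J:\cI_g^1\to L$ is surjective (which is classical for $g\ge 3$, and the low-genus cases can be handled separately), so that the correction term $(\delta_G-I)(J(\phi)\otimes(\text{edge variables}))$ can be made to equal any prescribed element of the ambiguity module. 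Threading the $\Sigma_g^1$ versus $\Sigma_g$ distinction carefully — using $\bmu_{\tau'}(\Lambda')\in F_2L\otimes R_1$ upstairs and reducing mod $H_R$ — is the bookkeeping one has to get right, but it is exactly parallel to the treatment of $\nu_\tau(\Gamma)$ in \S\ref{sec:tropicalCeresaClass}.
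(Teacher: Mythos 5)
Your overall strategy --- compare the cocycles attached to two hyperelliptic quasi-involutions, identify the resulting ambiguity module for $\bw_\tau(G)$, and use surjectivity of the Johnson homomorphism to realize any prescribed correction --- is exactly the paper's, and the $(3)\Rightarrow(1)$ realizability step you flag as the main obstacle is indeed resolved just as you suggest, by writing $\tilde\tau=\tau t$ with $t\in\cI_g$ and $J(t)=\bu$. However, there is a genuine gap in how you identify the ambiguity module. The correct cocycle computation gives
\begin{equation*}
\bv_{\tilde\tau}(G)-\bv_{\tau}(G)=\sum_{e\in E(G)}(\delta_e-I)\bigl(J(\tilde\tau^{-1}\tau)\bigr),
\end{equation*}
i.e.\ the \emph{sum} $\sum_e(\delta_e-I)$ applied to a constant-coefficient element of $L/H$ (each $\delta_e-I$ contributes the factor $x_e$). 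Applying $\delta_G-I$ then yields $\bw_{\tilde\tau}(G)-\bw_\tau(G)=\psi_G\bigl(J(\tilde\tau^{-1}\tau)\bigr)\in\psi_G(L/H)$ directly, with no further reconciliation needed. You instead describe the difference as ``$(\delta_G-I)$ applied to a fixed element coming from $J(\phi)$'' and conclude that $\bw_\tau(G)-\bw_{\tau_0}(G)\in(\delta_G-I)^2(L_R/H_R)$; this conflates $\delta_G-I$ with $\sum_e(\delta_e-I)$, which agree on $H_R$ and on the graded pieces $\gr_q^F$ but \emph{not} on $L_R/H_R$ --- the paper introduces $\psi_G$ precisely because of this failure.

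The patch you propose, namely that $\psi_G(L_R/H_R)=(\delta_G-I)^2(L_R/H_R)$ as submodules, is both unproven and unnecessary, and over $\Z$ it is doubtful: for instance $\psi_G$ and $(\delta_G-I)^2$ act on $\aaa{i}{j}{k}$ with cubic terms carrying coefficients $3$ and $6$ respectively, so the two maps genuinely differ outside $F_1$. More importantly, even if that identity held it would give you the wrong target: the lemma asserts membership in $\psi_G(L/H)$, the image of the \emph{constant-coefficient} submodule, and only such elements can be realized as $J(t)$ for $t\in\cI_g$ in the $(3)\Rightarrow(1)$ step. If your ambiguity module were all of $(\delta_G-I)^2(L_R/H_R)$ or $\psi_G(L_R/H_R)$, the realizability argument would break down, since a general element of $L_R/H_R$ is not in the image of the Johnson homomorphism. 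Once you replace $(\delta_G-I)$ by $\sum_e(\delta_e-I)$ in the first step, the argument closes up and coincides with the paper's proof.
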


\begin{proof}
Given hyperelliptic quasi-involutions $\tau, \tilde{\tau}$, we have
\begin{equation*}
    (J([T_{\ell_e},\tilde{\tau}]) - J([T_{\ell_e},\tau])) \otimes x_e =  (T_{\ell_e}-I)_*(J(\tilde{\tau}^{-1} \tau)) \otimes x_e = (\delta_e-I)(J(\tilde{\tau}^{-1} \tau))
\end{equation*}
and therefore
\begin{equation}
\label{eq:tautautilde}
    \bv_{\tilde{\tau}}(G) - \bv_{\tau}(G) = \sum_{e\in E(G)} (\delta_{e}-I)(J(\tilde{\tau}^{-1} \tau)).
\end{equation}
Suppose $G$ is Ceresa-Zharkov trivial, say $\bw_{\tau}(G)= 0$ for some $\tau$. Applying $\delta_G-I$ to Formula \eqref{eq:tautautilde}, we see that (1) $ \Rightarrow$ (2). The implication (2) $\Rightarrow$ (3) is trivial. Now suppose (3) is true, say $\bw_{\tau}(G) =  \psi_G(\bu)$ for some $\bu \in L/H$. By surjectivity of the Johnson homomorphism, there is a $t\in \cI_g$ such that $J(t)=\bu$. The function $\tilde{\tau} = \tau t$ is a hyperelliptic quasi-involution, and by the above equation, we have
\begin{equation*}
    \bv_{\tilde{\tau}}(G)  = \bv_{\tau}(G) - \sum_{e\in E(G)} (\delta_{e}-I)(\bu)
\end{equation*}
Applying $\delta_G-I$, we get 
\begin{equation*}
\bw_{\tilde{\tau}}(G) = \bw_{\tau}(G)  - \psi_G(\bu) = 0,    
\end{equation*}
as required.  
\end{proof}

\begin{proof}[Proof of Proposition \ref{prop:sufficientWCT}]
Let $\alpha_1,\ldots,\alpha_g,\beta_1,\ldots,\beta_g$ be the symplectic basis of $H$ from \S \ref{sec:graphs}. 
We first record the image of the simple wedges under the map $\psi_G$ using Proposition \ref{prop:fMinusIOnL}:
\begin{align*}
    \psi_G(\aaa{i}{j}{k}) = &2(Q_{G} \alpha_i \wedge Q_{G} \alpha_j \wedge \alpha_k + Q_{G} \alpha_i \wedge  \alpha_j \wedge Q_{G} \alpha_k +  \alpha_i \wedge Q_{G} \alpha_j \wedge Q_{G}\alpha_k) \\
    &\hspace{30pt} +3 \, Q_G \alpha_i \wedge Q_{G} \alpha_j \wedge Q_{G} \alpha_k \\
    \psi_G(\aab{i}{j}{k}) = &2\, Q_{G} \alpha_i \wedge Q_{G} \alpha_j \wedge \beta_k \\
    \psi_G(\abb{i}{j}{k}) = &\psi_G(\bbb{i}{j}{k}) = 0.
\end{align*}
By this computation, Formula \eqref{eq:deltaGQGformulas}, and the fact that $(\delta_G-I)^2|_{F_2(L/H)} = 0$, we deduce that $  (\delta_{G}-I)^2|_{F_1(L/H)} = \psi_{G} |_{F_1(L/H)}$. Therefore,
\begin{equation}
\label{eq:psiF1}
 (\delta_{G}-I)^2(F_1(L/H)) = \psi_G(F_1(L/H)) = \psi_G(L/H) \cap (F_3(L/H) \otimes R_2).
\end{equation}
Suppose $G$ is Ceresa-Zharkov trivial and that $\bv_{\tau}(G) \in F_2(L/H) \otimes R_1$. Then the class $\bw_{\tau}(G)$ lies in $\psi_G(L/H) \cap (F_3(L/H) \otimes R_2)$ by Lemma \ref{lem:conditionsWCT}. So Formula \eqref{eq:wtau} follows from \eqref{eq:psiF1}, as required. Conversely, if there exists a $\tau$ such that Formula \eqref{eq:wtau} holds, then $G$ is Ceresa-Zharkov trivial by Lemma \ref{lem:conditionsWCT} and Formula \eqref{eq:psiF1}. 
\end{proof}

\subsection{Edge-contraction}
In this section, we prove that Ceresa-Zharkov triviality is preserved under edge contraction. We prove in \S \ref{sec:MainTheorem} that Ceresa-Zharkov triviality is a minor-closed property. Suppose $G$ is a connected graph, $f\in E(G)$ is a nonloop edge, and $G/f$ the graph obtained from $G$ by contracting $f$.  Denote by $\zeta_f:R[G] \to R[G/f]$ the map that evaluates $x_f$ to 0; this is a retract of the natural inclusion $R[G/f] \subset R[G]$.

\begin{proposition}
\label{prop:weakCeresaContract}
With $G$ and $f$ as above, 
we have an equality of maps $(L/H)_{R[G]} \to (L/H)_{R[G/f]} $:
\begin{equation}
\label{eq:contractf}
	(1\otimes \zeta_f)\circ(\delta_{G}-I) = (\delta_{G/f}-I)\circ(1\otimes \zeta_f), 
\end{equation}
Also,
\begin{equation*}
(1\otimes \zeta_f) (\bv_{\tau}(G)) = \bv_{\tau}(G/f). 
\end{equation*}
In particular, if $G$ is Ceresa-Zharkov trivial, then so is $G/f$. 
\end{proposition}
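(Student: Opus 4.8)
The plan is to verify the three assertions of Proposition~\ref{prop:weakCeresaContract} in order, and then combine them. First I would establish the commutation \eqref{eq:contractf}. The key observation is that contracting a nonloop edge $f$ does not change the genus, so the spanning tree $T$ of $G$ can be chosen to contain $f$, and then $T/f$ is a spanning tree of $G/f$ with the same set of ``chord'' edges $e_1,\dots,e_g$. Consequently the symplectic bases $\alpha_1,\dots,\alpha_g,\beta_1,\dots,\beta_g$ for $H$ attached to $G$ and to $G/f$ agree, and the matrix $Q_{G/f}$ is obtained from $Q_G$ simply by setting $x_f = 0$; that is, $\zeta_f$ applied entrywise to $Q_G$ gives $Q_{G/f}$. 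Combining this with the explicit formulas \eqref{eq:deltaGQGformulas} for $(\delta_G - I)$ on the simple wedges $\aab{i}{j}{k}$, $\abb{i}{j}{k}$, $\bbb{i}{j}{k}$ (all of which are polynomial in the entries of $Q_G$), and using that $\zeta_f$ is a ring homomorphism, one checks \eqref{eq:contractf} on each basis simple wedge of $L_R/H_R$, hence in general by $R$-linearity. One must also observe that $\zeta_f$ is compatible with the passage from $L_R$ to $L_R/H_R$, i.e. it carries $H_{R[G]}$ into $H_{R[G/f]}$, which is immediate since $H$ itself carries no $x$-variables.

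Next I would prove $(1\otimes\zeta_f)(\bv_\tau(G)) = \bv_\tau(G/f)$. Recall $\bv_\tau(G) = \sum_{e\in E(G)} J([T_{\ell_e},\tau])\otimes x_e$ (with $J$ the Johnson homomorphism, independent of $G$ once the curve arrangement is fixed). Applying $1\otimes\zeta_f$ kills exactly the term indexed by $f$ and leaves the remaining terms $\sum_{e\in E(G)\setminus\{f\}} J([T_{\ell_e},\tau])\otimes x_e$ untouched. The point is then that this is precisely $\bv_\tau(G/f)$: the curve arrangement $\Lambda \setminus \{\ell_f\}$ on $\Sigma_g$ has dual graph $G/f$ (contracting the edge $e_f$ merges the two complementary subsurfaces adjacent to $\ell_f$), the classes $[\ell_e]$ for $e\ne f$ are unchanged, and the quasi-involution $\tau$ is the same mapping class. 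So the two sums literally coincide. A small point to address: Proposition~\ref{prop:CeresaF2R1} lets us choose $\tau$ so that $\bv_\tau(G)\in F_2(L/H)\otimes R_1$, and I would note that this same $\tau$ works simultaneously for $G/f$, since the $J([T_{\ell_e},\tau])$ are the same elements of $F_2 L$.

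Finally, for the ``in particular'' statement: suppose $G$ is Ceresa-Zharkov trivial, so by definition there is a hyperelliptic quasi-involution $\tau$ with $\bw_\tau(G) = (\delta_G - I)(\bv_\tau(G)) = 0$. Apply $1\otimes\zeta_f$: using \eqref{eq:contractf} and the second identity,
\begin{equation*}
    \bw_\tau(G/f) = (\delta_{G/f}-I)(\bv_\tau(G/f)) = (\delta_{G/f}-I)\bigl((1\otimes\zeta_f)(\bv_\tau(G))\bigr) = (1\otimes\zeta_f)\bigl((\delta_G-I)(\bv_\tau(G))\bigr) = (1\otimes\zeta_f)(0) = 0,
\end{equation*}
so $G/f$ is Ceresa-Zharkov trivial with the same $\tau$ (or rather its image under $\Mod(\Sigma_g^1)\to\Mod(\Sigma_g)$ built from a quasi-involution on $\Sigma_g^1$).

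I expect the main obstacle to be the bookkeeping in the first step: making precise that a spanning tree of $G$ containing $f$ induces a spanning tree of $G/f$ with the identical chord set, that the resulting symplectic bases agree, and hence that $Q_{G/f} = \zeta_f(Q_G)$ entrywise. This is geometrically obvious from the description of the $q_{ij}$ as signed counts of how cycles $\gamma_i,\gamma_j$ traverse edges, but it needs to be stated carefully enough that the reduction of \eqref{eq:contractf} to the formulas \eqref{eq:deltaGQGformulas} is rigorous. Everything after that is routine ring-homomorphism chasing, and the cocycle identity in the second step is essentially a tautology once the curve arrangement for $G/f$ is correctly identified.
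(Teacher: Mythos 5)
Your proposal is correct, and parts two and three (the identity $(1\otimes\zeta_f)(\bv_{\tau}(G))=\bv_{\tau}(G/f)$ and the concluding diagram chase) coincide with the paper's argument essentially verbatim. Where you diverge is in how you establish the commutation \eqref{eq:contractf}. The paper never touches the matrix $Q_G$ here: it works at the level of the individual operators $\delta_e$, using Proposition \ref{prop:prod2sum} to write $\delta_G-I=\sum_{e}(\delta_e-I)$ on $H_R$, observing that $(1\otimes\zeta_f)\circ(\delta_f-I)=0$ and that $\zeta_f$ commutes with $\delta_e-I$ for $e\neq f$, and then extending to $(L/H)_R$ via Proposition \ref{prop:fMinusIOnL}. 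This is basis-free and requires no choice of spanning tree. Your route instead fixes a spanning tree containing $f$, identifies the symplectic bases for $G$ and $G/f$, proves $Q_{G/f}=\zeta_f(Q_G)$ entrywise, and checks the identity on basis wedges; this works, but it carries exactly the bookkeeping burden you flag at the end (and note that the displayed formulas \eqref{eq:deltaGQGformulas} omit the wedges $\aaa{i}{j}{k}$, so for those you must fall back on Proposition \ref{prop:fMinusIOnL} and the polynomial dependence on the entries of $Q_G$ anyway). The operator-level argument is shorter and is the one the paper reuses almost verbatim for edge subdivision in Lemma \ref{lem:2valentVerticesWCT}, which is a reason to prefer it; your matrix formulation has the mild advantage of making the relation $Q_{G/f}=\zeta_f(Q_G)$ explicit, which is the kind of statement used later in the proof of Proposition \ref{prop:removeParallelEdge}.
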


Note that $G$ and $G/f$ have the same genus $g$ and both can be embedded into $\Sigma_g$ as described in \S \ref{sec:graphs}. So the same $\tau \in \Mod(\Sigma_g)$ gives Ceresa cocycles of $G$ and $G/f$.

\begin{proof}
First, consider the maps $1\otimes \zeta_f$ and $\delta_e-I$ on $H_R$. We have
\begin{equation*}
    (1\otimes \zeta_f) (\delta_e-I) =  (\delta_e-I) (1\otimes \zeta_f) \hspace{10pt} \text { when } e\neq f, \hspace{10pt} \text{ and } (1\otimes \zeta_f) (\delta_f-I) = 0.
\end{equation*}
As a map $H_R \to H_R$, by Proposition \ref{prop:prod2sum}, we have
\begin{equation*}
    (1\otimes \zeta_f) (\delta_G-I) = (1\otimes \zeta_f) \sum_{e\in E(G)}(\delta_e-I) = \sum_{e\in E(G/f)} (\delta_e-I) (1\otimes \zeta_f) = (\delta_{G/f}-I)(1\otimes \zeta_f).
\end{equation*}
Thus the identity in Formula \eqref{eq:contractf} holds as maps $H_{R[G]} \to H_{R[G/f]}$, and one readily extends this as an identity of morphisms $(L/H)_{R[G]} \to (L/H)_{R[G\setminus f]}$ using Proposition \ref{prop:fMinusIOnL}.
Next, consider what happens for the Ceresa class. We have
\begin{align*}
    (1\otimes \zeta_f)(\bv_{\tau}(G)) = (1\otimes \zeta_f) \sum_{e\in E(G)} J([T_{\ell_e}, \tau]) \otimes x_e  = \sum_{e\in E(G/f)} J([T_{\ell_e}, \tau]) \otimes x_e = \bv_{\tau}(G/f).
\end{align*}
Finally, suppose $G$ is Ceresa-Zharkov trivial, say $\bw_{\tau}(G) = 0$ for some $\tau$.  Then,
	\begin{align*}
	(\delta_{G/f}-I)(\bv_{\tau}(G/f)) = (\delta_{G/f}-I)\circ (1\otimes \zeta_f)(\bv_{\tau}(G)) = (1\otimes \zeta_f) \circ (\delta_{G}-I) (\bv_{\tau}(G)) = 0,
	\end{align*}
	and therefore $G/f$ is Ceresa-Zharkov trivial. 
\end{proof}

\subsection{Tropical equivalence}
Similar to the setting of tropical curves, two graphs are \textit{tropically equivalent} if one can be obtained from the other via the following moves:
\begin{itemize}
    \item[-] adding or removing a 1-valent vertex together with its adjacent edge, 
    \item[-] subdividing a edge,
    \item[-] contracting exactly one edge adjacent to a 2-valent vertex. 
\end{itemize}

In this section, we show Ceresa-Zharkov triviality is preserved under all these moves and thus is a property of tropical equivalence classes. This fact is analogous to \cite[Lemma 4.4]{CEL} which asserts that tropically equivalent tropical curves have the same Ceresa class.

\begin{lemma}
\label{lem:contractSeparatingEdge}
    Let $G$ be a connected graph and $e$ a separating edge. Then 
    \begin{enumerate}
        \item $\delta_{G} = \delta_{G/e}$;
        \item $\bv_{\tau}(G) = \bv_{\tau}(G/e)$;
        \item $\bw_{\tau}(G) = \bw_{\tau}(G/e)$.
    \end{enumerate}
    In particular, the graph $G$ is Ceresa-Zharkov trivial if and only if its 2-edge connectivization $G^2$, which is obtained by contracting all separating edges of $G$, is Ceresa-Zharkov trivial.  
\end{lemma}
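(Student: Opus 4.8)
The plan is to prove the three numbered identities in order, deriving them from properties of a separating (bridge) edge $e$, and then deduce the final "in particular" statement by iterating over all separating edges.

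The key observation is that if $e$ is a separating edge of $G$, then the dual curve $\ell_e$ is a \emph{separating} curve on $\Sigma_g$: removing $\ell_e$ disconnects $\Sigma_g$ (this reflects the fact that $e$ is a bridge in $G$, so $[\ell_e] = 0$ in $H$, since $e$ lies in no cycle of $G$). For item (1), I would argue that since $[\ell_e] = 0$, Formula \eqref{eq:deltaell} gives $\delta_{\ell_e}(h \otimes a) = h \otimes a$, so $\delta_{\ell_e} = I$ and hence $\delta_G = \prod_{e' \in E(G)} \delta_{\ell_{e'}} = \prod_{e' \in E(G/e)} \delta_{\ell_{e'}} = \delta_{G/e}$ as endomorphisms of $H_R$ (and therefore on $L_R$ and $L_R/H_R$). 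Note there is a mild subtlety: $R[G]$ has one more variable $x_e$ than $R[G/e]$, but since $\delta_{\ell_e} = I$, the map $\delta_G$ genuinely does not involve $x_e$, so it factors through $R[G/e] \hookrightarrow R[G]$ and the identification is clean.

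For item (2), I would use that $\ell_e$ being a separating curve means the Johnson homomorphism kills the corresponding Dehn twist: $J(T_{\ell_e}) = 0$ by the description in \S\ref{sec:Johnson}. More precisely, I need $J([T_{\ell_e}, \tau]) = 0$: since $\tau$ acts as $-I$ on $H$ and $T_{\ell_e}$ acts trivially on $H$ (as $[\ell_e]=0$), the commutator $[T_{\ell_e}, \tau]$ lies in the Torelli group, and one computes $J([T_{\ell_e},\tau]) = (T_{\ell_e} - I)_*(J(\tau^{-1})) + J(T_{\ell_e}) - (\text{terms})$; the cleanest route is the crossed-homomorphism identity used in the proof of Lemma \ref{lem:conditionsWCT}, namely $J([T_{\ell_e},\tau]) \otimes x_e = (\delta_e - I)(J(\tau^{-1}\cdot \text{something}))$ combined with $\delta_e - I = 0$. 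Since the $e$-term drops out of the sum defining $\bv_\tau$, we get $\bv_\tau(G) = \sum_{e' \in E(G)} J([T_{\ell_{e'}},\tau])\otimes x_{e'} = \sum_{e' \in E(G/e)} J([T_{\ell_{e'}},\tau]) \otimes x_{e'} = \bv_\tau(G/e)$, noting that the curve arrangement for $G$ restricts to one for $G/e$ (contracting a bridge just forgets the separating curve $\ell_e$). Item (3) is then immediate: $\bw_\tau(G) = (\delta_G - I)(\bv_\tau(G)) = (\delta_{G/e} - I)(\bv_\tau(G/e)) = \bw_\tau(G/e)$ by (1) and (2).

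For the final statement, the $2$-edge connectivization $G^2$ is obtained by contracting all separating edges one at a time; each contraction preserves $\bw_\tau$ by item (3) (using that a separating edge of $G$ maps to a separating edge of the partially-contracted graph, or is already contracted), and it also preserves the genus. Hence $\bw_\tau(G) = \bw_\tau(G^2)$ for the same $\tau \in \Mod(\Sigma_g)$, so one is zero iff the other is, i.e., $G$ is Ceresa-Zharkov trivial iff $G^2$ is. \textbf{The main obstacle} I anticipate is bookkeeping around the curve arrangement: one must check that an arrangement $\Lambda$ realizing $G$ restricts (after deleting the separating curve $\ell_e$) to a valid Lagrangian arrangement realizing $G/e$, and that the hyperelliptic quasi-involution $\tau$ can be taken to be the same for both — this is the content of the remark following the proposition, and it relies on the fact that $G$ and $G/e$ embed into the \emph{same} surface $\Sigma_g$. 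The algebraic identities themselves are routine once $[\ell_e] = 0$ is established.
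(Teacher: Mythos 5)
Your proof is correct and follows essentially the same route as the paper's: (1) holds because $e$ being a bridge forces $[\ell_e]=0$ and hence $\delta_{\ell_e}=I$, (2) because the Johnson homomorphism kills the $e$-term, and (3) is formal, with the final claim obtained by iterating over separating edges on the same surface $\Sigma_g$. One small tightening for (2): the crossed-homomorphism identity combined with $\delta_e-I=0$ only shows that $J([T_{\ell_e},\tau])$ is independent of the choice of $\tau$, not that it vanishes; the direct argument is that $[T_{\ell_e},\tau]=T_{\ell_e}T_{\tau(\ell_e)}^{-1}$ with both $\ell_e$ and $\tau(\ell_e)$ separating curves, so each twist is killed by $J$.
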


\begin{proof}
Statement (1) follows from the fact that  $\delta_{e} = I$ whenever $e$ is a separating edge. Statement (2) follows from the fact that $[T_{\ell_e}, \tau] = 1$ for any hyperelliptic quasi-involution $\tau$. Finally, statement (3) is a consequence of (1) and (2). 
\end{proof}

Let $G$ be a graph, and suppose $f \in E(G)$ is subdivided into 2 edges $e_1,e_2$, producing the graph $G'$.  Consider the ring map $\phi_f:R[G] \to R[G']$ given by 
\begin{equation*}
\phi_f(x_e) =
\left\{   
\begin{array}{ll}
       x_e  & \text{ if }  e\neq f\\
        x_{e_1} + x_{e_2} & \text{ if }  e = f
    \end{array}
    \right.
\end{equation*}

\begin{lemma}
\label{lem:2valentVerticesWCT}
With the above notation, we have
\begin{equation}
\label{eq:subdivideEdgeDelta}
(1\otimes \phi_f) \circ (\delta_{G}-I) = (\delta_{G'}-I) \circ (1\otimes \phi_f)       
\end{equation}
as morphisms $(L/H)_{R[G]} \to (L/H)_{R[G']}$, and 
\begin{equation*}
(1 \otimes \phi_f) (\bv_{\tau}(G)) = \bv_{\tau}(G').      
\end{equation*}
\end{lemma}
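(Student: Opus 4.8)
The plan is to prove \Cref{lem:2valentVerticesWCT} by the same strategy used for \Cref{prop:weakCeresaContract}: first establish the intertwining relation \eqref{eq:subdivideEdgeDelta} on $H_R$, then bootstrap it to $L_R/H_R$ via \Cref{prop:fMinusIOnL}, and finally handle the Ceresa cocycle directly. The geometric input is that when $f$ is subdivided into $e_1,e_2$, there is an arrangement $\Lambda'$ of curves on $\Sigma_g$ whose dual graph is $G'$ such that $\ell_{e_1}$ and $\ell_{e_2}$ are parallel (isotopic) curves in $\Sigma_g$: indeed, the two new curves dual to $e_1$ and $e_2$ bound an annulus cut out from $\Sigma_g$, since the vertex created between them is $2$-valent. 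In particular $[\ell_{e_1}] = [\ell_{e_2}] = [\ell_f]$ in $H$, and the Torelli mapping classes $[T_{\ell_{e_1}}, \tau]$ and $[T_{\ell_{e_2}}, \tau]$ both equal $[T_{\ell_f}, \tau]$ since the Dehn twists about isotopic curves agree.

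First I would record the computation on $H_R$. The map $\delta_{e_1}$ on $H_{R[G']}$ is given by \eqref{eq:deltaell} using $[\ell_{e_1}]=[\ell_f]$ and the variable $x_{e_1}$, and similarly for $\delta_{e_2}$; since these curves coincide, both have the form $h\otimes a \mapsto h\otimes a + \langle h,[\ell_f]\rangle[\ell_f]\otimes x_{e_i}a$. By \Cref{prop:prod2sum} applied on $H_{R[G']}$, we get $\delta_{G'} - I = \sum_{e\in E(G')}(\delta_e-I)$, and the two summands coming from $e_1,e_2$ add up, on simple tensors, to $h\otimes a \mapsto \langle h,[\ell_f]\rangle[\ell_f]\otimes(x_{e_1}+x_{e_2})a$. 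Comparing with $(1\otimes\phi_f)(\delta_f-I)$, which sends $h\otimes a \mapsto \langle h,[\ell_f]\rangle[\ell_f]\otimes\phi_f(x_f)\,\phi_f(a) = \langle h,[\ell_f]\rangle[\ell_f]\otimes(x_{e_1}+x_{e_2})\phi_f(a)$, one sees that $(1\otimes\phi_f)(\delta_f-I) = (\delta_{e_1}-I+\delta_{e_2}-I)(1\otimes\phi_f)$ as maps $H_{R[G]}\to H_{R[G']}$; and for $e\neq f$ we have $(1\otimes\phi_f)(\delta_e-I) = (\delta_e-I)(1\otimes\phi_f)$ since $\phi_f$ fixes $x_e$ and commutes with the relevant tensor operations. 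Summing over all edges of $G$ and again invoking \Cref{prop:prod2sum} gives \eqref{eq:subdivideEdgeDelta} on $H_R$.

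Next I would promote this to $(L/H)_{R[G]}\to(L/H)_{R[G']}$. The endomorphisms $\delta_G-I$ and $\delta_{G'}-I$ of $L_R/H_R$ are determined by their action on simple wedges $h_1\wedge h_2\wedge h_3$ via the explicit seven-term expansion in \Cref{prop:fMinusIOnL}, written entirely in terms of the maps $h\mapsto(\delta_G-I)h$, resp.\ $h\mapsto(\delta_{G'}-I)h$, on $H_R$; since $1\otimes\phi_f$ is an $R[G]$-algebra map acting compatibly on wedge products, the $H_R$-level identity just established propagates term-by-term through this expansion (using also that $(1\otimes\phi_f)(f(h)) = f'((1\otimes\phi_f)h)$ for the composed maps), giving \eqref{eq:subdivideEdgeDelta} as morphisms of $L_R/H_R$. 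Finally, for the Ceresa cocycle: using $[T_{\ell_{e_1}},\tau] = [T_{\ell_{e_2}},\tau] = [T_{\ell_f},\tau]$ and hence $J([T_{\ell_{e_i}},\tau]) = J([T_{\ell_f},\tau])$, we compute
\begin{align*}
(1\otimes\phi_f)(\bv_\tau(G)) &= (1\otimes\phi_f)\sum_{e\in E(G)} J([T_{\ell_e},\tau])\otimes x_e \\
&= \sum_{e\neq f} J([T_{\ell_e},\tau])\otimes x_e + J([T_{\ell_f},\tau])\otimes(x_{e_1}+x_{e_2}) \\
&= \sum_{e\in E(G')} J([T_{\ell_e},\tau])\otimes x_e = \bv_\tau(G').
\end{align*}
The main obstacle I anticipate is purely topological rather than algebraic: justifying carefully that subdividing an edge of the dual graph corresponds, at the level of surfaces, to replacing the dual curve $\ell_f$ by two parallel copies $\ell_{e_1},\ell_{e_2}$ cobounding an annulus (equivalently, that the new arrangement still has the prescribed dual graph $G'$ and remains Lagrangian). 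Once that identification is in hand — it follows from the description of dual graphs in \S\ref{subsec:multitwists} together with the uniqueness of the stable model — the remaining steps are the routine bookkeeping sketched above. One should also note, as in the remark after \Cref{prop:weakCeresaContract}, that $G$ and $G'$ have the same genus $g$, so a single $\tau\in\Mod(\Sigma_g)$ serves for both.
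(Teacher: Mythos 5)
Your proof is correct and follows essentially the same route as the paper: observe that $\ell_f$, $\ell_{e_1}$, $\ell_{e_2}$ are isotopic, verify the intertwining identity on $H_R$ edge-by-edge using Proposition \ref{prop:prod2sum} (with the two new variables summing to $\phi_f(x_f)=x_{e_1}+x_{e_2}$), extend to $(L/H)_R$ via Proposition \ref{prop:fMinusIOnL}, and compute $\bv_\tau$ directly from $J([T_{\ell_{e_1}},\tau])=J([T_{\ell_{e_2}},\tau])=J([T_{\ell_f},\tau])$. The only difference is that you flag the topological identification of the subdivided edge with two parallel curves cobounding an annulus explicitly, which the paper leaves implicit.
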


\begin{proof}
First, observe that the loops $\ell_{f}$, $\ell_{e_1}$, and $\ell_{e_2}$ are isotopic to each other, thus
\begin{equation*}
    T_{\ell_f} = T_{\ell_{e_1}} = T_{\ell_{e_2}}
\end{equation*}
as elements of $\Mod(\Sigma_g)$, and as a consequence, 
\begin{equation*}
    \delta_{\ell_f} = \delta_{\ell_{e_1}} = \delta_{\ell_{e_2}}.
\end{equation*}
As morphisms $H_{R[G]} \to H_{R[G']}$, we have
\begin{equation*}
    (1\otimes \phi_f) \circ (\delta_{e}-I) = \left\{
    \begin{array}{ll}
        (\delta_{\ell_e}-I) \circ (1\otimes \phi_f) & \text{if } e \neq f  \\
        ((\delta_{\ell_{e_1}}-I) + (\delta_{\ell_{e_2}}-I) ) \circ (1\otimes \phi_f) & \text{if } e = f
    \end{array}  \right.
\end{equation*}
Therefore,
\begin{align*}
    (1\otimes \phi_f) \circ (\delta_{G}-I) 
    &= (1\otimes \phi_f) \circ \sum_{e\in E(G)} (\delta_{\ell_{e}}-I) \\
    &= \left((\delta_{\ell_{e_1}}-I) + (\delta_{\ell_{e_2}}-I) + \sum_{e\in E(G)\setminus f} (\delta_{\ell_{e}}-I) \right) \circ (1\otimes \phi_f) \\
    &= (\delta_{G'}-I) \circ (1\otimes \phi_f)       
\end{align*}
Thus, the identity in Formula \ref{eq:subdivideEdgeDelta} holds as morphisms $H_{R[G]} \to H_{R[G']}$. One readily extends this as an identity of morphisms $(L/H)_{R[G]} \to (L/H)_{R[G']}$ using Proposition \ref{prop:fMinusIOnL}.

For the statement regarding the Ceresa class, we compute 
\begin{align*}
    (1\otimes \phi_f)(\bv_{\tau}(G)) &= J([T_{\ell_f},\tau]) \otimes (x_{e_1} + x_{e_2}) + \sum_{e\in E(G) \setminus f} J([T_{\ell_e},\tau]) \otimes x_e  \\
    &= \sum_{e\in E(G')} J([T_{\ell_e},\tau]) \otimes x_e \\
    &= \bv_{\tau}(G'). \qedhere
\end{align*}
\end{proof}

\begin{proposition}
\label{prop:stableWCT}
If $G$ and $G'$ are tropically equivalent graphs, then $G$ is Ceresa-Zharkov trivial if and only if $G'$ is Ceresa-Zharkov trivial. 
\end{proposition}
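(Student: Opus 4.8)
The strategy is to handle each of the three tropical-equivalence moves separately, since tropical equivalence is generated by them. First, adding or removing a $1$-valent vertex together with its adjacent edge does not change the first Betti number of the graph and, since the adjacent edge is a bridge (hence separating), Lemma \ref{lem:contractSeparatingEdge} applies: contracting it preserves $\delta_G$, $\bv_\tau(G)$, and $\bw_\tau(G)$, so Ceresa-Zharkov triviality is preserved. (More carefully, adding a $1$-valent vertex is subdividing no edge but attaching a pendant edge; one can also realize it as first subdividing, then the pendant becomes separating — but the cleanest route is to observe directly that the new pendant edge $e$ has $\delta_e = I$ and $[T_{\ell_e},\tau]=1$, exactly as in the proof of Lemma \ref{lem:contractSeparatingEdge}.)

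Second, for the subdivision move I would use Lemma \ref{lem:2valentVerticesWCT} directly. Suppose $G'$ is obtained from $G$ by subdividing $f$ into $e_1,e_2$. If $G$ is Ceresa-Zharkov trivial, pick $\tau$ with $\bw_\tau(G)=0$; then applying $1\otimes\phi_f$ and using the intertwining relation \eqref{eq:subdivideEdgeDelta} together with $(1\otimes\phi_f)(\bv_\tau(G)) = \bv_\tau(G')$ gives
\begin{equation*}
\bw_\tau(G') = (\delta_{G'}-I)(\bv_\tau(G')) = (\delta_{G'}-I)\circ(1\otimes\phi_f)(\bv_\tau(G)) = (1\otimes\phi_f)\circ(\delta_G-I)(\bv_\tau(G)) = 0,
\end{equation*}
so $G'$ is Ceresa-Zharkov trivial. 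Conversely, if $G'$ is Ceresa-Zharkov trivial, I need to go back down; here I would invoke the composite of the subdivision with the contraction-of-one-edge-at-a-$2$-valent-vertex move. Contracting $e_2$ (say) in $G'$ returns $G$, and $e_2$ is a nonloop edge, so Proposition \ref{prop:weakCeresaContract} applies: $G$ is Ceresa-Zharkov trivial because $G'$ is. Thus the subdivision move and its inverse (the $2$-valent contraction move) are covered simultaneously by Lemma \ref{lem:2valentVerticesWCT} in one direction and Proposition \ref{prop:weakCeresaContract} in the other.

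Third, the move ``contracting exactly one edge adjacent to a $2$-valent vertex'' is precisely the inverse of subdivision, so it is already handled: if $G'$ has a $2$-valent vertex $w$ incident to edges $e_1,e_2$ and $G = G'/e_2$, then $G'$ is obtained from $G$ by subdividing, and the previous paragraph shows $G$ Ceresa-Zharkov trivial $\iff$ $G'$ Ceresa-Zharkov trivial. Assembling the three cases: since any two tropically equivalent graphs are connected by a finite chain of these moves and each move preserves Ceresa-Zharkov triviality in both directions, $G$ is Ceresa-Zharkov trivial if and only if $G'$ is.

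\textbf{Main obstacle.} The routine calculations are all packaged in Lemmas \ref{lem:contractSeparatingEdge} and \ref{lem:2valentVerticesWCT} and Proposition \ref{prop:weakCeresaContract}, so the real content is bookkeeping: making sure each of the three moves (and its inverse) is matched to the correct lemma, and in particular noticing that the subdivision move going ``down'' must be argued via the contraction proposition rather than by trying to invert $1\otimes\phi_f$ (which is not surjective, since $\phi_f$ lands in the subring where $x_{e_1},x_{e_2}$ appear only through their sum). I would flag that subtlety explicitly. A secondary point to check is that the pendant-edge move genuinely falls under the separating-edge lemma — i.e., that a bridge is separating in the sense used there — which is immediate but worth a sentence.
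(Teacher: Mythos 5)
Your proposal is correct and follows essentially the same route as the paper: the pendant-edge move is handled by Lemma \ref{lem:contractSeparatingEdge} (the new edge is separating), subdivision going up uses Lemma \ref{lem:2valentVerticesWCT}, and the inverse direction (contracting at a 2-valent vertex) uses Proposition \ref{prop:weakCeresaContract}. The subtlety you flag about not inverting $1\otimes\phi_f$ is exactly why the paper also routes the converse through the edge-contraction proposition.
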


\begin{proof}
If $G'$ is obtained from $G$ by adding or contracting an edge adjacent to a 1-valent vertex, then the claim follows from Lemma \ref{lem:contractSeparatingEdge}. Now suppose $G'$ is obtained from $G$ by subdividing an edge $f \in E(G)$ into $e_1$ and $e_2$. If $G'$ is Ceresa-Zharkov trivial, then $G$ is Ceresa-Zharkov trivial by Proposition \ref{prop:weakCeresaContract}. Conversely, if  $G$ is Ceresa-Zharkov trivial, say $\bw_{\tau}(G) = 0$ for some $\tau$, then by Lemma \ref{lem:2valentVerticesWCT} we have
\begin{equation*}
    \bw_{\tau}(G')  = (\delta_{G}-I) \circ (1\otimes \phi_f) (\bv_{\tau}(G)) = (1\otimes \phi_f)(\bw_{\tau}(G)) = 0 
\end{equation*}
and hence $G'$ is Ceresa-Zharkov trivial. 
\end{proof}

\subsection{Relation to the tropical Ceresa class}

Let $\Gamma = (G,c)$ be a tropical curve. Recall from Section \ref{sec:tropicalCeresaClass} that $\bv(\Gamma) \in B(\delta_{\Gamma})$ is the image of the Ceresa class $\nu(\Gamma)$ under the map $A(\delta) \to B(\delta)$. Given $c:E(G) \to \Z_{>0}$, define homomorphisms 
\begin{align*}
    \epsilon_{c}^{1}: F_2(L/H) \otimes R_1 \to B(\delta_{\Gamma}) \hspace{20pt} \epsilon_{c}^{2}: F_3(L/H) \otimes R_2 \to C(\delta_{\Gamma}) 
\end{align*}
by the compositions
\begin{align*}
    &F_2(L/H)\otimes R_1 \to F_2(L/H) \to \gr_2^F(L/H) \to B(\delta_{\Gamma}),  \\
    &F_3(L/H)\otimes R_2 \to F_3(L/H) \to \gr_3^F(L/H) \to C(\delta_{\Gamma}),
\end{align*}
respectively, where the first maps $ F_2(L/H)\otimes R_1 \to F_2(L/H)$ and $F_3(L/H)\otimes R_2 \to F_3(L/H)$  are given by the evaluation map $h\otimes f \mapsto h\otimes f(c)$. We say that $\Gamma$ is \textit{Ceresa-Zharkov trivial} if $\bw(\Gamma) = 0$ in $C(\delta_{\Gamma})$.

\begin{proposition}
\label{prop:GtoGamma}
Let $\Gamma = (G,c)$ be a tropical curve, and let $\tau$ be a hyperelliptic quasi-involution such that $\bv_{\tau}(G)$ lies in $F_2(L/H)\otimes R_1$. Then
\begin{equation*}
 \epsilon_{c}^1(\bv_{\tau}(G)) = \bv(\Gamma), \hspace{15pt} \text{and} \hspace{15pt}   \epsilon_{c}^{2}(\bw_{\tau}(G)) = \bw(\Gamma).
\end{equation*}
In particular, if $G$ is Ceresa-Zharkov trivial, then so is $\Gamma$. 
\end{proposition}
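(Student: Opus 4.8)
The plan is to unwind the definitions of $\bv(\Gamma)$ and $\bw(\Gamma)$ from \S\ref{sec:tropicalCeresaClass} and compare them term-by-term with the images of $\bv_\tau(G)$ and $\bw_\tau(G)$ under the evaluation-then-project maps $\epsilon_c^1$ and $\epsilon_c^2$. Recall from Equation \eqref{eq:CeresaInBdelta} that $\bv_\tau(\Gamma) = \sum_{e} c(e)\, J([T_{\ell_e},\tau])$ is a representative for $\bv(\Gamma)$ in $B(\delta_\Gamma)$, while $\bv_\tau(G) = \sum_e J([T_{\ell_e},\tau]) \otimes x_e$ lives in $F_2(L/H)\otimes R_1$ by hypothesis on $\tau$. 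The evaluation map $h\otimes f \mapsto h\otimes f(c)$ sends each $J([T_{\ell_e},\tau])\otimes x_e$ to $c(e)\,J([T_{\ell_e},\tau])$, so $\epsilon_c^1(\bv_\tau(G))$ is precisely the image in $B(\delta_\Gamma)$ of $\sum_e c(e)\,J([T_{\ell_e},\tau]) = \bv_\tau(\Gamma)$. Since this is a representative of $\bv(\Gamma)$, the first identity follows immediately once one checks that $\epsilon_c^1$ is well-defined and agrees with the projection $F_2(L/H)\to \gr_2^F(L/H)\to B(\delta_\Gamma)$ used to define $\bv(\Gamma)$ — this compatibility is essentially built into the definition of $\epsilon_c^1$ as a composition.

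For the second identity, the subtlety is that $\bw_\tau(G) = (\delta_G-I)(\bv_\tau(G))$ is computed using the endomorphism $\delta_G-I$ of $L_R/H_R$ (which does \emph{not} split as $\sum_e(\delta_e-I)$), whereas $\bw(\Gamma) = (\delta_\Gamma-I)(\bv(\Gamma))$ is computed in $C(\delta_\Gamma)$. First I would note that the evaluation homomorphism $R\to\Z$, $x_e\mapsto c(e)$, intertwines $\delta_G-I$ on $L_R/H_R$ with $\delta_\Gamma-I$ on $L/H$: this is exactly the content of the statement in \S\ref{sec:graphs} that $Q_\Gamma = Q_G\otimes_R\id_\Z$, combined with the matrix formula \eqref{eq:deltaQ} and the formulas \eqref{eq:deltaGQGformulas}. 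Hence applying evaluation to $\bw_\tau(G) = (\delta_G-I)(\bv_\tau(G))$ yields $(\delta_\Gamma-I)$ applied to the evaluation of $\bv_\tau(G)$, which by the first paragraph is a representative of $\bv(\Gamma)$. The remaining point is that the projection $F_3(L/H)\to\gr_3^F(L/H)\to C(\delta_\Gamma)$ carries this element to $(\delta_\Gamma-I)(\bv(\Gamma)) = \bw(\Gamma)$; since $\bw_\tau(G)$ already lies in $F_3L\otimes R_2$ by Proposition \ref{prop:CeresaF2R1} and $F_3(L/H)\cong\gr_3^F(L/H)$, there is no ambiguity in the grading, and the compatibility of $(\delta_\Gamma-I): B(\delta_\Gamma)\to C(\delta_\Gamma)$ with the graded map on $\gr^F$ (as recorded in \S\ref{sec:tropicalCeresaClass}) closes the argument.

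The main obstacle I anticipate is bookkeeping with the various quotients and filtrations: one must verify that the element $(\delta_\Gamma-I)(\bv_\tau(\Gamma))$, a priori only well-defined in $L/H$ modulo the relevant subgroup, maps correctly into $C(\delta_\Gamma) \cong F_3L/(\delta_\Gamma-I)^2(F_1L)$, and that the choice of representative $\bv_\tau(\Gamma)$ for $\bv(\Gamma)$ does not affect $\bw(\Gamma)$ — but this last is already guaranteed by the well-definedness of the map $(\delta_\Gamma-I):B(\delta_\Gamma)\to C(\delta_\Gamma)$ asserted in \S\ref{sec:tropicalCeresaClass}. The final sentence, that $G$ Ceresa-Zharkov trivial implies $\Gamma$ Ceresa-Zharkov trivial, is then immediate: if $\bw_\tau(G) = 0$ for a suitable $\tau$, apply $\epsilon_c^2$ to get $\bw(\Gamma) = \epsilon_c^2(0) = 0$, using Proposition \ref{prop:CeresaF2R1} to ensure such a $\tau$ with $\bv_\tau(G)\in F_2(L/H)\otimes R_1$ exists so that the hypothesis of the proposition is met.
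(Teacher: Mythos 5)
Your verification of the two displayed identities is sound and is essentially the paper's own (very terse) argument: evaluation $x_e \mapsto c(e)$ turns $\bv_\tau(G)=\sum_e J([T_{\ell_e},\tau])\otimes x_e$ into the representative $\sum_e c(e)\,J([T_{\ell_e},\tau])$ of $\bv(\Gamma)$ from Formula \eqref{eq:CeresaInBdelta}, and it intertwines $\delta_G-I$ with $\delta_\Gamma-I$ because $Q_\Gamma = Q_G\otimes_R \id_\Z$, so $\epsilon_c^2(\bw_\tau(G))=\bw(\Gamma)$ follows.

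However, your final paragraph has a genuine gap. Ceresa-Zharkov triviality of $G$ means $\bw_{\tau_0}(G)=0$ for \emph{some} hyperelliptic quasi-involution $\tau_0$, while Proposition \ref{prop:CeresaF2R1} produces a possibly \emph{different} $\tau$ with $\bv_\tau(G)\in F_2(L/H)\otimes R_1$. You cannot simply ``apply $\epsilon_c^2$ to $0$'': the identity $\epsilon_c^2(\bw_\tau(G))=\bw(\Gamma)$ is only available for the second $\tau$, and for that $\tau$ there is no reason to expect $\bw_\tau(G)=0$ --- changing the quasi-involution shifts $\bw_\tau(G)$ by an element of $\psi_G(L/H)$ (Lemma \ref{lem:conditionsWCT}), which is generally nonzero. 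The missing step is exactly what the paper's citation of Proposition \ref{prop:sufficientWCT} supplies: for the $\tau$ with $\bv_\tau(G)\in F_2(L/H)\otimes R_1$, triviality of $G$ forces $\bw_\tau(G)\in(\delta_G-I)^2(F_1(L/H))$, and $\epsilon_c^2$ carries this submodule into the image of $(\delta_\Gamma-I)^2(F_1(L/H))$, which vanishes in $C(\delta_\Gamma)=\coker\left((\delta_\Gamma-I)^2\right)$ by definition. With that substitution your argument closes.
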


\begin{proof}
This is a direct consequence of Formula \eqref{eq:CeresaInBdelta} and Proposition \ref{prop:sufficientWCT}. 
\end{proof}

By \cite[Proposition 4.6]{CEL}, if a tropical curve $\Gamma$ is hyperelliptic, then $\Gamma$ is Ceresa trivial. Assuming Theorem \ref{thm:WCTiffHET}, we derive a similar statement for tropical curves of hyperelliptic type and Ceresa-Zharkov triviality.

\begin{proposition}
\label{prop:HETthenWCT}
If a tropical curve $\Gamma$ is of hyperelliptic type, then $\Gamma$ is Ceresa-Zharkov trivial.
\end{proposition}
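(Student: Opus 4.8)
The plan is to reduce the statement about tropical curves of hyperelliptic type to the purely graph-theoretic statement contained in \Cref{thm:WCTiffHET}, using the specialization machinery of \Cref{prop:GtoGamma}. The key point is that, by \cite[Theorem 1.1]{Corey}, a tropical curve $\Gamma$ is of hyperelliptic type if and only if its underlying graph $G$ has no $K_4$ or $L_3$ minor, and this condition depends only on $G$, not on the edge lengths $c$.

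First I would let $\Gamma = (G,c)$ be a tropical curve of hyperelliptic type. By \cite[Theorem 1.1]{Corey}, the underlying graph $G$ has no $K_4$ or $L_3$ minor. Then, by \Cref{thm:WCTiffHET}, the graph $G$ is Ceresa-Zharkov trivial, i.e., there exists a hyperelliptic quasi-involution $\tau$ with $\bw_\tau(G) = 0$. After possibly modifying $\tau$ via \Cref{lem:conditionsWCT} (or invoking \Cref{prop:CeresaF2R1}), we may assume $\bv_\tau(G)$ lies in $F_2(L/H)\otimes R_1$, so that \Cref{prop:GtoGamma} applies. That proposition then gives $\bw(\Gamma) = \epsilon_c^2(\bw_\tau(G)) = \epsilon_c^2(0) = 0$ in $C(\delta_\Gamma)$, which is precisely the definition of $\Gamma$ being Ceresa-Zharkov trivial.

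One subtlety to address is the hypothesis of \Cref{prop:GtoGamma} that $\tau$ be chosen so that $\bv_\tau(G)$ lies in $F_2(L/H)\otimes R_1$; I would note that \Cref{prop:CeresaF2R1} guarantees such a $\tau$ exists, and that one can simultaneously arrange $\bw_\tau(G) = 0$ because the proof of \Cref{prop:sufficientWCT} shows that whether $\bw_\tau(G)$ lies in $(\delta_G-I)^2(F_1(L/H))$ is independent of the particular $\tau$ with $\bv_\tau(G)\in F_2(L/H)\otimes R_1$ (and Ceresa-Zharkov triviality of $G$ is exactly this containment, by \Cref{prop:sufficientWCT}). Alternatively, one appeals directly to the equivalence of conditions in \Cref{lem:conditionsWCT}. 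A second point worth a sentence: the genus of $\Gamma$ is at least $2$ if $\Gamma$ is of hyperelliptic type and non-trivial, so \Cref{thm:WCTiffHET} genuinely applies; the genus-$0$ and genus-$1$ cases are vacuous or trivial.

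The only real obstacle is the logical dependency: \Cref{prop:HETthenWCT} is stated as a consequence of \Cref{thm:WCTiffHET}, so the proof is essentially a two-line deduction once the main theorem is in hand, together with the dictionary ``hyperelliptic type $\Leftrightarrow$ no $K_4$ or $L_3$ minor'' from \cite{Corey} and the specialization statement \Cref{prop:GtoGamma}. There is no hard calculation here; the work has already been done in establishing \Cref{thm:WCTiffHET} and in setting up $\epsilon_c^2$.
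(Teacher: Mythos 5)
Your proposal is correct and follows essentially the same route as the paper: Theorem \ref{thm:WCTiffHET} gives Ceresa-Zharkov triviality of the underlying graph $G$, and Proposition \ref{prop:GtoGamma} specializes this to $\Gamma$. Your extra care about choosing $\tau$ with $\bv_\tau(G)\in F_2(L/H)\otimes R_1$ (via Proposition \ref{prop:CeresaF2R1}) is a legitimate hypothesis of Proposition \ref{prop:GtoGamma} that the paper leaves implicit, so including it is a minor improvement rather than a deviation.
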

\begin{proof}
The statement follows from Proposition \ref{prop:GtoGamma} and Theorem \ref{thm:WCTiffHET}.
\end{proof}
\noindent However, the converse to Proposition \ref{prop:HETthenWCT} is not true by Remark \ref{rmk:K4NotWCT}.

\section{Ceresa-Zharkov trivial and tropical curves of hyperelliptic type}
\label{sec:MainTheorem}

The main goal of this section is to prove Theorem \ref{thm:WCTiffHET}, that a graph is Ceresa-Zharkov trivial if and only if it is of hyperelliptic type. 
Being of hyperelliptic type is a minor closed condition on graphs by \cite[Proposition 3.8]{Corey}. Because of this,  Theorem \ref{thm:WCTiffHET} implies that  Ceresa-Zharkov triviality is also a minor closed condition. Nevertheless, to prove this theorem, we need the fact that Ceresa-Zharkov triviality is preserved under edge contraction (Proposition \ref{prop:weakCeresaContract}) and under removal of an edge in a parallel pair. 

\begin{proposition}
\label{prop:removeParallelEdge}
	Consider either 
	\begin{enumerate}
	    \item a connected graph $G_1$ with a loop edge $a$, or
	    \item a 2-connected graph $G_2$ with a pair of parallel edges $(b,c)$.
	\end{enumerate}
	 If $G_1$, resp. $G_2$, is Ceresa-Zharkov trivial, then so is $G_1\setminus a$, resp.  $G_2\setminus b$.
\end{proposition}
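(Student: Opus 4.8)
The strategy is to analyze the two cases via the substitution homomorphisms on polynomial rings, in the spirit of Proposition \ref{prop:weakCeresaContract} and Lemma \ref{lem:2valentVerticesWCT}, but now setting an edge-variable to $0$ in a way that corresponds to \emph{deleting} rather than \emph{contracting} an edge. For case (1), let $a$ be a loop of $G_1$. Deleting a loop does not change the genus, and the curve $\ell_a$ dual to $a$ is separating on $\Sigma_g$ (a loop edge corresponds to a separating curve in the Lagrangian arrangement); hence $[T_{\ell_a},\tau] = 1$ and the contribution of $a$ to $\bv_\tau(G_1)$ vanishes, exactly as in Lemma \ref{lem:contractSeparatingEdge}(2). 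Moreover $\delta_a = I$ since $\bk{h}{[\ell_a]} = 0$ for all $h$ when $[\ell_a]=0$, so $\delta_{G_1} = \delta_{G_1\setminus a}$ and $\bv_\tau(G_1) = \bv_\tau(G_1\setminus a)$, $\bw_\tau(G_1) = \bw_\tau(G_1\setminus a)$; thus Ceresa-Zharkov triviality transfers immediately. (In fact this shows the two graphs are Ceresa-Zharkov equivalent, not just that triviality descends.)

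For case (2), let $(b,c)$ be a parallel pair in the $2$-connected graph $G_2$, so that $G_2\setminus b$ again has genus $g$. The key observation is that $\ell_b$ and $\ell_c$ are \emph{isotopic} curves on $\Sigma_g$: removing a parallel pair from a dual arrangement, the two dual curves cobound an annulus (the planar surface corresponding to the degree-$2$ vertex-free bigon), so $T_{\ell_b} = T_{\ell_c}$ and hence $\delta_b = \delta_c$, and $J([T_{\ell_b},\tau]) = J([T_{\ell_c},\tau])$. Consider the ring map $\rho: R[G_2] \to R[G_2\setminus b]$ sending $x_b \mapsto x_c$ and fixing all other variables. Arguing exactly as in Lemma \ref{lem:2valentVerticesWCT} (with $\phi_f$ replaced by $\rho$, and using $\delta_b=\delta_c$ in place of the isotopy $\ell_f\simeq\ell_{e_1}\simeq\ell_{e_2}$), one gets $(1\otimes\rho)\circ(\delta_{G_2}-I) = (\delta_{G_2\setminus b}-I)\circ(1\otimes\rho)$ on $(L/H)_{R[G_2]}$ — first on $H_R$ via Proposition \ref{prop:prod2sum}, then extended to $L_R/H_R$ via Proposition \ref{prop:fMinusIOnL} — and $(1\otimes\rho)(\bv_\tau(G_2)) = \bv_\tau(G_2\setminus b)$, since the $x_b$- and $x_c$-terms of $\bv_\tau(G_2)$ have the same Johnson coefficient and collapse onto the single $x_c$-term. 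If $\bw_\tau(G_2)=0$ for some $\tau$, applying $1\otimes\rho$ then $(\delta_{G_2\setminus b}-I)$ and using the intertwining relation gives $\bw_\tau(G_2\setminus b) = (\delta_{G_2\setminus b}-I)(1\otimes\rho)(\bv_\tau(G_2)) = (1\otimes\rho)(\delta_{G_2}-I)(\bv_\tau(G_2)) = (1\otimes\rho)(\bw_\tau(G_2)) = 0$, so $G_2\setminus b$ is Ceresa-Zharkov trivial.

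The main obstacle I anticipate is justifying the geometric/topological claim in case (2): that removing one edge of a parallel pair corresponds, at the level of curve arrangements on $\Sigma_g$, to deleting one of two \emph{isotopic} simple closed curves, so that genus is preserved and $\delta_b = \delta_c$. One must check that $\Lambda\setminus\ell_b$ (the arrangement for $G_2\setminus b$) is still a Lagrangian arrangement with dual graph $G_2\setminus b$, and that this can be chosen compatibly with the embedding conventions of \S\ref{sec:graphs} so the \emph{same} $\tau \in \Mod(\Sigma_g)$ computes both Ceresa cocycles (as noted after Proposition \ref{prop:weakCeresaContract} for the contraction case). The $2$-connectedness hypothesis on $G_2$ is what guarantees $G_2\setminus b$ remains connected of the same genus and that $\ell_c$ is nonseparating; for case (1) connectedness alone suffices because a loop is never a bridge. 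Once the isotopy $\ell_b \simeq \ell_c$ is in hand, the algebra is a routine variant of the subdivision argument already carried out in Lemma \ref{lem:2valentVerticesWCT}.
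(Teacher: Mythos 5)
Both halves of your argument rest on incorrect topological identifications, and as a result the real difficulty of the proposition --- that deleting $a$ (resp.\ $b$) drops the genus by one, so the two Ceresa cocycles live on surfaces of \emph{different} genera --- never appears. In case (1) you assert that a loop edge is dual to a separating curve with $[\ell_a]=0$, $\delta_a=I$. This is backwards: it is a \emph{bridge} (separating edge) of $G$ whose dual curve is null-homologous (that is Lemma \ref{lem:contractSeparatingEdge}). A loop edge means both sides of $\ell_a$ lie in the same complementary component, so $\Sigma_{g+1}\setminus\ell_a$ is connected and $\ell_a$ is non-separating; in the paper's basis $[\ell_a]=\beta_0$ is part of a symplectic basis, $q_{00}=x_0\neq 0$, and deleting $a$ lowers the first Betti number from $g+1$ to $g$. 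So $\bv_\tau(G_1)$ and $\bv_{\tau'}(G_1\setminus a)$ cannot simply be equated. In case (2) you assert $\ell_b$ and $\ell_c$ are isotopic because they ``cobound a bigon''; that picture belongs to \emph{subdividing} an edge at a $2$-valent vertex (Lemma \ref{lem:2valentVerticesWCT}), not to a parallel pair. Here $b=e_0$ and $c=e_1$ can both be taken outside a spanning tree, so $[\ell_b]=\beta_0$ and $[\ell_c]=\beta_1$ are linearly independent, $\delta_b\neq\delta_c$, and again the genus drops upon deletion. Consequently your proposed ring map $\rho:x_b\mapsto x_c$ cannot intertwine $\delta_{G_2}-I$ with $\delta_{G_2\setminus b}-I$: these are endomorphisms of $\wedge^3$ of homology groups of different ranks.

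For comparison, the paper's proof first relates $\bv_\tau(G)$ on $\Sigma_{g+1}$ and $\bv_{\tau'}(G\setminus f)$ on $\Sigma_g$ to a common class $\bmu_{\tau''}$ on the subsurface $\Sigma_g^1$ (Lemma \ref{lem:mu2nu}), then works with the explicit determinant formulas of Lemmas \ref{lem:image1} and \ref{lem:image2}. In the loop case the discrepancy $c_{rst}-c_{rst}'$ vanishes because $q_{j0}=0$ for $j\geq 1$. In the parallel case it does not vanish outright: one must use the hypothesis $\bw_\tau(G)=(\delta_G-I)^2(\bu)$ itself, extracting the coefficient of $x_0$ to obtain the identity \eqref{eq:coefx0}, and then add a correction term $\bu''$ built from the $a_{0jk}$ so that $\bw_{\tau'}(G\setminus f)=(\delta_{G\setminus f}-I)^2(\bu'+\bu'')$. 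Your proposal contains no analogue of this step, and without it case (2) cannot be closed. A telling symptom is that your argument would prove the converse implication as well (you even note it proves ``Ceresa--Zharkov equivalence''), whereas the paper's proof genuinely consumes the triviality hypothesis.
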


\noindent 
We begin by describing, for any connected graph $G$, the images  $(\delta_{G}-I)(F_2(L_R/H_R))$ and $(\delta_{G}-I)^2(F_1(L_R/H_R))$. 
Order the edges of $G$ by $e_1,\ldots,e_n$ so that $e_{g+1},\ldots,e_n$ are the edges of a spanning tree. Let $\alpha_1,\ldots,\alpha_g,\beta_{1},\ldots,\beta_{g}$ be the basis of $H_1(\Sigma_g,\mathbb{Z})$ from \S  \ref{sec:graphs}, so $\delta_{G}$ has the form in \eqref{eq:deltaQ}. Write $q_{ij}$ for the entries of $Q_G$. 

\begin{lemma}\label{lem:image1}
We have
\begin{equation*}
(\delta_G-I) \left( \sum_{i; j<k} \abb{i}{j}{k}  \otimes b_{ijk} \right)= \sum_{r<s<t}  \bbb{r}{s}{t} \otimes c_{rst}.
\end{equation*}
where
\begin{align*}
    c_{rst} = \sum_{i} (b_{irs}q_{ti} - b_{irt}q_{si} + b_{ist}q_{ri}).
\end{align*}
\end{lemma}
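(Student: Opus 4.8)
The plan is to compute $(\delta_G-I)$ directly on a general element of $F_2(L_R/H_R)$ using the formulas already established, and then extract the coefficient of each basis wedge $\bbb{r}{s}{t}$. Recall that $F_2(L/H)$ is spanned by the wedges $\abb{i}{j}{k}$ (with $j<k$) and $\bbb{i}{j}{k}$; but the latter are killed by $\delta_G-I$ since $\delta_G$ fixes each $\beta_m$ (see the second line of \eqref{eq:deltaGQGformulas}, or directly: $(\delta_G-I)(\bbb{r}{s}{t})=0$). So it suffices to handle the $\abb{i}{j}{k}$ part, and the input is written as $\sum_{i;\,j<k}\abb{i}{j}{k}\otimes b_{ijk}$.

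First I would apply the middle identity of \eqref{eq:deltaGQGformulas}, namely $(\delta_G-I)(\abb{i}{j}{k}) = Q_G(\alpha_i)\wedge\beta_j\wedge\beta_k$, together with $R$-linearity, to get
\begin{equation*}
(\delta_G-I)\Bigl(\sum_{i;\,j<k}\abb{i}{j}{k}\otimes b_{ijk}\Bigr) = \sum_{i;\,j<k} \bigl(Q_G(\alpha_i)\wedge\beta_j\wedge\beta_k\bigr)\otimes b_{ijk}.
\end{equation*}
Then I would substitute $Q_G(\alpha_i) = \sum_m q_{mi}\,\beta_m$, so each term becomes $\sum_m q_{mi}\,(\bbb{m}{j}{k})\otimes b_{ijk}$. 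Since $\bbb{m}{j}{k}=0$ unless $m,j,k$ are distinct, and is alternating in its three indices, the next step is purely bookkeeping: collect, for each triple $r<s<t$, all contributions of the form $q_{mi}b_{ijk}$ where $\{m,j,k\}=\{r,s,t\}$, tracking the sign coming from reordering $\bbb{m}{j}{k}$ into $\bbb{r}{s}{t}$. Because $j<k$ is fixed in the indexing of the input but $m$ ranges freely, for a given $\{r,s,t\}$ there are exactly three ways the "free" index $m$ can be one of $r,s,t$ while $\{j,k\}$ is the complementary pair in increasing order: $(m,j,k)=(t,r,s)$, $(s,r,t)$, $(r,s,t)$. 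Reordering $\bbb{t}{r}{s}=+\bbb{r}{s}{t}$, $\bbb{s}{r}{t}=-\bbb{r}{s}{t}$, and $\bbb{r}{s}{t}=+\bbb{r}{s}{t}$ yields
\begin{equation*}
c_{rst} = \sum_i\bigl(q_{ti}b_{irs} - q_{si}b_{irt} + q_{ri}b_{ist}\bigr),
\end{equation*}
which (using symmetry $q_{ij}=q_{ji}$, so $q_{ti}=q_{it}$ etc.) is exactly the claimed formula.

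I do not anticipate a serious obstacle here: the content is entirely the sign/combinatorial bookkeeping of antisymmetrizing a sum of $\beta$-wedges, and the only thing to be careful about is the convention that the input sum runs over $j<k$ while the free index is unrestricted, which is precisely what produces the three-term expression rather than, say, a six-term one. A minor point worth stating explicitly is that the computation takes place in $L_R/H_R$, but since every term on both sides lies in $F_3L_R = (\wedge^3 Y)_R$ and $F_3L\cap H=\{0\}$, reducing mod $H_R$ changes nothing, so the identity is genuinely an identity of wedges in $(\wedge^3 Y)_R$.
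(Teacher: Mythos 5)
Your proposal is correct and follows essentially the same route as the paper: apply the identity $(\delta_G-I)(\abb{i}{j}{k})=Q_G(\alpha_i)\wedge\beta_j\wedge\beta_k$ from \eqref{eq:deltaGQGformulas}, expand $Q_G(\alpha_i)=\sum_\ell q_{\ell i}\beta_\ell$, and extract the coefficient of each $\bbb{r}{s}{t}$ with the sign bookkeeping you describe (the paper leaves that last step implicit). Your remark that the identity already holds in $F_3L_R$ before passing to $L_R/H_R$ is a harmless extra observation, and the appeal to symmetry of $Q_G$ at the end is not actually needed since the indices already match.
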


\begin{proof}
By Formula \eqref{eq:deltaGQGformulas}, we have that
\begin{align*}
    (\delta_G-I)  \sum_{i; j<k} \abb{i}{j}{k} \otimes b_{ijk}  = \sum_{i; j<k}  Q_{G}(\alpha_i) \wedge \beta_j \wedge \beta_k \otimes b_{ijk} 
    =  \sum_{i; j<k; \ell}  \bbb{\ell}{j}{k} \otimes  b_{ijk} q_{\ell i}.
\end{align*}
Given $r<s<t$ one may extract the coefficient $c_{rst}$ for $\bbb{r}{s}{t}$. 
\end{proof}

\begin{lemma}\label{lem:image2}
We have
\begin{equation*}
(\delta_G-I)^2 \sum_{i<j; k} \aab{i}{j}{k} \otimes  a_{ijk} = \sum_{r<s<t}  \bbb{r}{s}{t} \otimes c_{rst} \hspace{10pt} \text{where} \hspace{5pt}    c_{rst} = 2 \sum_{i<j} \,
\begin{vmatrix}
q_{ri} & q_{rj} & a_{ijr} \\
q_{si} & q_{sj} & a_{ijs} \\
q_{ti} & q_{tj} & a_{ijt} 
\end{vmatrix}.
\end{equation*}
\end{lemma}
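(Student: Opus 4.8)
\textbf{Proof plan for Lemma \ref{lem:image2}.}

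The plan is to apply $(\delta_G - I)$ twice, using the explicit formula for $(\delta_G-I)^2$ on simple wedges of type $\alpha\wedge\alpha\wedge\beta$ from Formula \eqref{eq:deltaGQGformulas}, namely $(\delta_G-I)^2(\aab{i}{j}{k}) = 2\, Q_G(\alpha_i) \wedge Q_G(\alpha_j) \wedge \beta_k$. First I would substitute $Q_G(\alpha_i) = \sum_{\ell} q_{\ell i}\beta_\ell$ and $Q_G(\alpha_j) = \sum_{m} q_{mj}\beta_m$, so that
\begin{equation*}
(\delta_G-I)^2 \sum_{i<j; k} \aab{i}{j}{k} \otimes a_{ijk} = 2\sum_{i<j; k} \sum_{\ell, m} \bbb{\ell}{m}{k} \otimes q_{\ell i} q_{mj} a_{ijk}.
\end{equation*}
Then the task is purely bookkeeping: collect, for a fixed ordered triple $r<s<t$, the total coefficient of $\bbb{r}{s}{t}$ coming from all ways the unordered set $\{\ell, m, k\}$ can equal $\{r,s,t\}$ with the appropriate signs from reordering the wedge.

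The main step is the sign bookkeeping. For fixed $i<j$, the inner sum $\sum_{\ell,m,k} q_{\ell i} q_{mj} a_{ijk}\, \bbb{\ell}{m}{k}$ ranges over all assignments of $(\ell, m, k)$; the terms where two of the indices coincide vanish since $\bbb{}{}{}$ is alternating, so only the $3! = 6$ permutations assigning $\{\ell,m,k\}$ to $\{r,s,t\}$ survive. Reordering each $\bbb{\ell}{m}{k}$ to the standard form $\bbb{r}{s}{t}$ introduces the sign of the permutation, and summing the six signed terms $\operatorname{sgn}(\sigma)\, q_{\sigma(r), i}\, q_{\sigma(s), j}\, a_{ij,\sigma(t)}$ over $\sigma \in S_{\{r,s,t\}}$ is, by cofactor expansion along the third column, exactly the $3\times 3$ determinant
\begin{equation*}
\begin{vmatrix}
q_{ri} & q_{rj} & a_{ijr} \\
q_{si} & q_{sj} & a_{ijs} \\
q_{ti} & q_{tj} & a_{ijt}
\end{vmatrix}.
\end{equation*}
Multiplying by the overall factor $2$ and summing over $i<j$ gives the claimed formula for $c_{rst}$.

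I expect the only real obstacle is making sure the determinant identification is done with the correct column order and sign; one should double-check against the first row of \eqref{eq:deltaGQGformulas} (or against Lemma \ref{lem:image1}, which is the single-application analog and has the same determinantal flavor with a $2\times 2$ minor implicit) and perhaps verify on a small example such as $G = K_4$ using $Q_G$ from \eqref{eq:QK4}. Everything else — the vanishing of repeated-index terms, the linearity used to pull the sum outside — is routine.
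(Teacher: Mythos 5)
Your proposal is correct and follows essentially the same route as the paper: apply the third identity of Formula \eqref{eq:deltaGQGformulas}, expand $Q_G(\alpha_i)$ and $Q_G(\alpha_j)$ in the $\beta$-basis, and recognize the signed sum over reorderings as the cofactor expansion of the $3\times 3$ determinant along its third column. The paper merely organizes the bookkeeping by first collecting the $\ell<m$ terms into $2\times 2$ minors before assembling the determinant, which is the same computation.
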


\begin{proof}
By Formula \ref{eq:deltaGQGformulas}, we have that
\begin{align*}
    (\delta_G-I)^2 \left( \sum_{i<j; k} \aab{i}{j}{k} \otimes  a_{ijk} \right) = \sum_{i<j; k} 2 Q_{G} \, \alpha_i \wedge Q_{G} \, \alpha_j \wedge \, \beta_k \otimes  a_{ijk}\\
    =  \sum_{i<j; k} \sum_{\ell<m} \bbb{\ell}{m}{k} \otimes  2a_{ijk}  
    \begin{vmatrix}
        q_{\ell i} & q_{\ell j} \\
        q_{m i} & q_{m j}
    \end{vmatrix}.
\end{align*}
 Given $r<s<t$, the $R$-coefficient of $\bbb{r}{s}{t}$ is
\begin{equation*}
    c_{rst} = 2\sum_{i<j}  \left(
    a_{ijr} \begin{vmatrix}
        q_{si} & q_{sj} \\
        q_{ti} & q_{tj}
    \end{vmatrix} - 
    a_{ijs} \begin{vmatrix}
        q_{ri} & q_{rj} \\
        q_{ti} & q_{tj}
    \end{vmatrix} +
    a_{ijt} \begin{vmatrix}
        q_{ri} & q_{rj} \\
        q_{si} & q_{sj}
    \end{vmatrix}
     \right)
\end{equation*}
and the summand is exactly the $3\times 3$ determinant appearing in the lemma.
\end{proof}

\begin{figure}[tbh!]
    \centering
    \includegraphics[width=\textwidth]{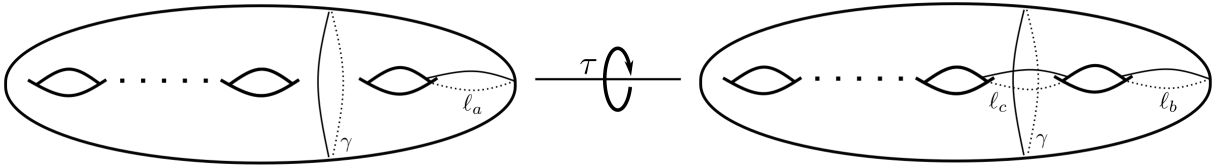}
    \caption{Arrangements of curves on $\Sigma_{g+1}$ with dual graphs $G_1$ and $G_2$ where (left) $a$ is a loop edge and (right) $(b,c)$ are parallel edges. Here, we identify $\Sigma_{g}^1$ with the subsurface of $\Sigma_{g+1}$ to the left of $\gamma$. }
    \label{fig:SigmagSigmag1}
\end{figure}

Our next step is to show how the Ceresa classes of $G$ and $G \setminus f$ are related. 
The two cases listed in Proposition \ref{prop:removeParallelEdge} are similar, so we handle them in parallel. Let $G_1$ be a connected graph with a loop edge $a$ and let $G_2$ be a 2-connected graph with a pair of parallel edges $b,c$. Let $\Lambda_1 = \{\ell_e \, : \, e\in E(G_1)\}$ and $\Lambda_2 = \{\ell_e \, : \, e\in E(G_2)\}$ be two arrangements of isotopy classes of simple closed curves whose dual graphs are $G_1$ and $G_2$ respectively.
Suppose that $\ell_{a}$, $\ell_{b}$, $\ell_{c}$ are as in Figure \ref{fig:SigmagSigmag1}, and every other $\ell_{e}$ lies in $\Sigma_{g}^1$, the genus $g$ subsurface with a boundary to the left of $\gamma$. 
To emphasize the dependence on the genus $g$, we write
\begin{equation*}
    H(\Sigma_g^{n}) = H_1(\Sigma_{g}^{n},\Z), \hspace{10pt} L(\Sigma_{g}^{n}) = \wedge^3H_1(\Sigma_g^{n}).
\end{equation*}
where $n=0$ or $1$. 
The inclusion $\Sigma_{g}^1\hookrightarrow \Sigma_{g+1}$ induces an inclusion on homology groups which we use to identify  $H(\Sigma_g) \cong H(\Sigma_g^1)$ as a subgroup of $H(\Sigma_{g+1})$. 
Now, recall that we obtain $\Sigma_{g}$ from $\Sigma_{g}^1$ by attaching a disc along $\gamma$. Any two extensions of $\ell_{c}\cap \Sigma_{g}^1$ to $\Sigma_{g}$ are isotopic. Choose such an extension and denote it also by $\ell_{c}$. 

We may view $\Lambda_1\setminus \{\ell_a\}$ as an arrangement of curves on $\Sigma_g$ or $\Sigma_g^1$. We may also view $\Lambda_2\setminus \{\ell_b\}$ as an arrangement of curves on $\Sigma_g$, and  $\Lambda_2\setminus \{\ell_b,\ell_{c}\}$ as an arrangement of curves on $\Sigma_g^1$. Because $L(\Sigma_{g})/H(\Sigma_g)$ does not naturally  embed into $L(\Sigma_{g+1})/H(\Sigma_{g+1})$, we cannot directly compare
$\bv_{\tau}(\Lambda_1)$ with $\bv_{\tau'}(\Lambda_1\setminus \{\ell_a\})$, or $\bv_{\tau}(\Lambda_2)$ with $\bv_{\tau'}(\Lambda_2\setminus \{\ell_b\})$. Instead, we compare these with Ceresa classes on $\Sigma_g^1$ in the following way. 

\begin{lemma}
\label{lem:mu2nu}
There are hyperelliptic quasi-involutions  $\tau$ on $\Sigma_{g+1}$, $\tau'$ on $\Sigma_{g}$, and $\tau''$ on $\Sigma_{g}^1$, under the natural homomorphisms 
\begin{align*} 
    &L(\Sigma_g^1)_{R[\Lambda_1 \setminus \{\ell_a\}]}  \to L(\Sigma_g)_{R[\Lambda_1 \setminus \{\ell_a\}]} / H(\Sigma_g)_{R[\Lambda_1 \setminus \{\ell_a\}]} \\
    &L(\Sigma_g^1)_{R[\Lambda_1 \setminus \{\ell_a\}]} \hookrightarrow L(\Sigma_{g+1}^1)_{R[\Lambda_1]} \to L(\Sigma_{g+1})_{R[\Lambda_1]} / H(\Sigma_{g+1})_{R[\Lambda_1]}
\end{align*}
the class $\bmu_{\tau''}(\Lambda_1\setminus \{\ell_a\})$ maps to $\bv_{\tau'}(\Lambda_1\setminus \{\ell_a\})$ and $\bv_{\tau}(\Lambda_1)$, respectively. Similarly, the natural homomorphisms
\begin{align*}
    L(\Sigma_g)_{R[\Lambda_2\setminus \{\ell_{b},\ell_{c}\}]} &\to L(\Sigma_g)_{R[\Lambda_2 \setminus \{\ell_b\}]}/H(\Sigma_g)_{R[\Lambda_2 \setminus \{\ell_{b}\}]} \\ 
    L(\Sigma_g^1)_{R[\Lambda_2\setminus \{\ell_b,\ell_{c}\}]} &\hookrightarrow L(\Sigma_{g+1})_{R[\Lambda_2]} \to  L(\Sigma_{g+1})_{R[\Lambda_2]}/H(\Sigma_{g+1})_{R[\Lambda_2]}
\end{align*}
take $\bmu_{\tau''}(\Lambda_2\setminus \{\ell_b,\ell_c\})$  to $\bv_{\tau'}(\Lambda_2\setminus \{\ell_b\})$ and $\bv_{\tau}(\Lambda_2)$, respectively.

Furthermore, we may choose  $\tau$, $\tau'$, and $\tau''$ so that all classes defined above live in the $F_2$ part of the relevant filtration.
\end{lemma}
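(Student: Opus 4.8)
The plan is to treat the loop and parallel cases in parallel, as the statement invites, and to reduce everything to the behavior of the Johnson homomorphism under the surface inclusions $\Sigma_g^1 \hookrightarrow \Sigma_{g+1}$ and the quotient maps $L \to L/H$. First I would fix notation for the symplectic bases: choose a symplectic basis of $H(\Sigma_g^1)$ adapted to the arrangement $\Lambda_1 \setminus \{\ell_a\}$ (resp.\ $\Lambda_2 \setminus \{\ell_b,\ell_c\}$) as in \S\ref{sec:graphs}, and extend it to a symplectic basis of $H(\Sigma_{g+1})$ by adjoining the pair $\alpha_{g+1} = -[\iota(\gamma)]$-type class and $\beta_{g+1} = [\ell_a]$ (resp.\ a class dual to the new handle created by $\ell_b,\ell_c$). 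The key point is that for each curve $\ell_e$ with $e \neq a$ (resp.\ $e \neq b,c$), the twist $T_{\ell_e}$ is supported in $\Sigma_g^1$, so $[T_{\ell_e},\tau']$ and $[T_{\ell_e},\tau'']$ are supported there too once $\tau',\tau''$ are chosen compatibly, and the Johnson homomorphism is natural with respect to the inclusion of subsurfaces — $J([T_{\ell_e},\tau''])$ computed in $L(\Sigma_g^1)$ maps to $J([T_{\ell_e},\tau'])$ in $L(\Sigma_g)/H(\Sigma_g)$ and to $J([T_{\ell_e},\tau])$ in $L(\Sigma_{g+1})/H(\Sigma_{g+1})$ under the stated maps.

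The construction of the quasi-involutions is where some care is needed, and I expect it to be the main obstacle. I would start by invoking \cite[Theorem~6.6]{CEL} (as in Proposition \ref{prop:CeresaF2R1}) to produce a hyperelliptic quasi-involution $\tau''$ of $\Sigma_g^1$ with each $J([T_{\ell_e},\tau''])$ in $F_2 L(\Sigma_g^1)$; this handles the final ``furthermore'' clause for $\tau''$. Then I would \emph{extend} $\tau''$ to quasi-involutions $\tau$ of $\Sigma_{g+1}$ and $\tau'$ of $\Sigma_g$: since $\tau''$ acts as $-I$ on $H(\Sigma_g^1)$, and the extra symplectic summand(s) in $H(\Sigma_{g+1})$ (resp.\ $H(\Sigma_g)$) can be acted on by $-I$ via a hyperelliptic involution of the complementary genus-one subsurface, one can build a mapping class of $\Sigma_{g+1}$ (resp.\ $\Sigma_g$) restricting to $\tau''$ on $\Sigma_g^1$ and acting as $-I$ on all of $H$. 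The compatibility $J([T_{\ell_e},\tau]) \equiv$ image of $J([T_{\ell_e},\tau''])$ then follows from the formula for the Johnson homomorphism in terms of the restricted symplectic form $\omega_S$ (\S\ref{sec:Johnson}): when $T_{\ell_e}T_{\ell_e'}^{-1}$ is a bounding pair with both curves in $\Sigma_g^1$, the subsurface $S$ cut off inside $\Sigma_g^1$ is the same as the one cut off inside the larger surface, so $\omega_S \wedge [\gamma]$ is literally the same element, landing in the $F_2$ part of the larger filtration via the inclusion. I would check this on the generators of the Torelli group (separating twists give $0$, bounding pairs give $\omega_S \wedge [\cdot]$) and conclude by linearity and the definition $\bmu_{\tau''}(\Lambda') = \sum_{\ell} J([T_\ell,\tau'']) \otimes x_\ell$.

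Finally, to get the $F_2$ conclusion for $\tau$ and $\tau'$, I would observe that since each $J([T_{\ell_e},\tau''])$ lies in $F_2 L(\Sigma_g^1) = (\wedge^2 Y') \wedge H(\Sigma_g^1)$ where $Y' = \Span\{[\ell_e]\}$, and the inclusion $L(\Sigma_g^1) \hookrightarrow L(\Sigma_{g+1})$ sends $Y'$ into $Y = \Span\{[\ell_e] : e \in E(G_1)\}$ (the new generator $[\ell_a] = \beta_{g+1}$ is also in $Y$), the images land in $F_2 L(\Sigma_{g+1})$, and likewise for $\Sigma_g$; so $\bv_\tau(\Lambda_1)$ and $\bv_{\tau'}(\Lambda_1 \setminus \{\ell_a\})$ lie in the respective $F_2(L/H)$, as claimed. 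The parallel-edge case is identical once one notes that $\ell_b$ and $\ell_c$ become isotopic in $\Sigma_{g+1}$ after capping $\gamma$ — but in $G_2$ they are distinct edges, so $[\ell_b],[\ell_c]$ both lie in $Y$ and the extension of $\tau''$ is built over the genus-one piece bounded by $\ell_b \cup \ell_c \cup \gamma$; the naturality of $J$ does the rest. The one genuine subtlety to flag is that $L(\Sigma_g)/H(\Sigma_g)$ does \emph{not} embed in $L(\Sigma_{g+1})/H(\Sigma_{g+1})$ (as the authors note), so the two comparison maps in the lemma go through $L(\Sigma_g^1)$ as a common source rather than through a single map between quotients — I would be careful to phrase all equalities at the level of $L(\Sigma_g^1)$-classes before pushing forward.
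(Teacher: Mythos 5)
Your overall architecture (relate everything through $L(\Sigma_g^1)$ as a common source, use naturality of the Johnson homomorphism under subsurface inclusion, and get the $F_2$ statement from \cite[Theorem~6.6]{CEL}) matches the spirit of the paper, but there is a genuine gap at the heart of the argument: you never account for the terms coming from the curves you are removing. By definition $\bv_{\tau}(\Lambda_1) = \sum_{\ell\in\Lambda_1} J([T_{\ell},\tau])\otimes x_{\ell}$ includes the summand $J([T_{\ell_a},\tau])\otimes x_a$, while $\bmu_{\tau''}(\Lambda_1\setminus\{\ell_a\})$ has no such term; so for the lemma to hold you need $J([T_{\ell_a},\tau])=0$, and similarly $J([T_{\ell_b},\tau])=J([T_{\ell_c},\tau])=0$ on $\Sigma_{g+1}$ and $J([T_{\ell_c},\tau'])=0$ on $\Sigma_g$ in the parallel case. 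A generic extension of $\tau''$ that merely ``acts as $-I$ on the complementary genus-one piece'' gives you no control here: $\tau(\ell_a)$ is only constrained to be homologous to $-[\ell_a]$, not isotopic to $\ell_a$, so $[T_{\ell_a},\tau]=T_{\ell_a}T_{\tau(\ell_a)}^{-1}$ can be a nontrivial bounding-pair map with nonzero Johnson image (and then $\bv_{\tau}(\Lambda_1)$ would not even lie in the image of $L(\Sigma_g^1)_R$, undermining your $F_2$ argument as well). This is why the paper goes in the opposite direction: it constructs $\tau$ globally and concretely as the $180^{\circ}$ rotation of the handlebody picture in Figure~\ref{fig:SigmagSigmag1}, which visibly preserves $\ell_a$, $\ell_b$, $\ell_c$ setwise (forcing $[T_{\ell_f},\tau]=1$ for $f=a,b,c$), and only then isotopes $\tau$ to fix $\gamma$ pointwise and \emph{restricts} to obtain $\tau''$ and $\tau'$. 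The meridian/handlebody structure of that explicit model is also what triggers \cite[Theorem~6.6]{CEL} simultaneously for all three surfaces.

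A secondary inaccuracy: you assert that $\ell_b$ and $\ell_c$ ``become isotopic in $\Sigma_{g+1}$ after capping $\gamma$,'' but capping $\gamma$ produces $\Sigma_g$, not $\Sigma_{g+1}$, and the relevant point in the paper is the choice of an extension of $\ell_c\cap\Sigma_g^1$ to $\Sigma_g$ together with the vanishing of $[T_{\ell_c},\tau']$. To repair your proof you would need to add, to your extension step, the requirement that the extended quasi-involution preserve the isotopy classes of the discarded curves --- at which point you have essentially reconstructed the paper's explicit rotation.
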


\begin{proof}
Consider the surfaces homeomorphic to $\Sigma_{g+1}$ in Figure  \ref{fig:SigmagSigmag1}. These depict $\Sigma_{g+1}$ embedded in $\R^3$ and bounding a handlebody $V$, the ``inside'' of the surface. Assume that the curves in $\Lambda_1$ and $\Lambda_2$ are \textit{meridians}, i.e., they bound properly embedded discs in $V$. We use these assumptions in the next paragraph to show that the Ceresa classes all belong to $F_2$.  

Next, let $\tau$ be the hyperelliptic involution on $\Sigma_{g+1}$ given by rotation by $180^{\circ}$ about the axis illustrated in Figure \ref{fig:SigmagSigmag1}; in particular, $\tau$ takes $\ell_f$ to $\ell_f$ (reversing its orientation) for $f=a$, $b$, $c$. The map $\tau$ can be isotoped in a regular neighborhood of $\gamma$ so that it fixes $\gamma$ pointwise. Consequently, the mapping class $\tau \in \Mod(\Sigma_{g+1})$ restricts to a hyperelliptic quasi-involution $\tau''$ on $\Sigma_{g}^{1}$; denote by $\tau'$ the image of $\tau''$ under the natural map $\Mod(\Sigma_{g}^1) \to \Mod(\Sigma_{g})$. Because all the curves in $\Lambda_1$ and $\Lambda_2$ are meridians and the hyperelliptic quasi-involutions $\tau$, $\tau'$, and $\tau''$ extend to homeomorphisms on the handlebodies,  each $J([T_{\ell}, \tau''])$, $J([T_{\ell}, \tau'])$, and $J([T_{\ell}, \tau])$ lie in $F_2$ by \cite[Theorem~6.6]{CEL}, and therefore $\bmu_{\tau''}$, $\bv_{\tau'}$, and $\bv_{\tau'}$ lie in $F_2\otimes R_1$. 

Since $[T_{\ell_{f}},\tau] = T_{\ell_{f}}T_{\tau(\ell_f)}^{-1}$ and $\tau(\ell_f)$ is isotopic to $\ell_f$ for $f=a,b,c$, we have that $[T_{\ell_f},\tau] = 1$ for $f=a,b,c$. Similarly, we have $[T_{\ell_c},\tau'] = 1$. Therefore, 
{\footnotesize
\begin{equation*}
    \bmu_{\tau''}(\Lambda_1\setminus \{\ell_a\}) =  \sum_{e\in E(G_1)\setminus \{a\}} J([T_{\ell_e}, \tau ]) \otimes x_e \hspace{10pt} \text{and} \hspace{10pt} \bmu_{\tau''}(\Lambda_2\setminus \{\ell_b,\ell_c\}) =  \sum_{e\in E(G_1)\setminus \{b,c\}} J([T_{\ell_e}, \tau ]) \otimes x_e
\end{equation*}
}
The lemma follows from these computations. 
\end{proof}

\begin{proof}[Proof of Proposition \ref{prop:removeParallelEdge}] 
Suppose that the graph $G$ and edge $f\in E(G)$ are either 
\begin{enumerate}
    \item $G=G_1$  and $f=a$ is a loop edge, or 
    \item $G=G_2$, a 2-connected graph, and $f=b$ is parallel to another edge $c$. 
\end{enumerate}
Let $g+1$ be the genus of $G$. Order the edges of $G$ by  $e_0, \ldots, e_g, e_{g+1}, \ldots, e_n$ such that the last $n-(g+1)$ edges form a spanning tree of $G$ and $e_0 = f$. In the second case, assume that $e_1=c$ and orient $[\ell_{b}]$ and $[\ell_{c}]$ in the same direction. This is possible since $G_2$ is 2-connected, so $b,c$ are not a separating pair.  Denote by $\alpha_{0},\ldots,\alpha_g,\beta_0,\ldots,\beta_g$ the basis described in Section \ref{sec:graphs}. This identifies $Q_{G\setminus f}$ with the lower-right $g\times g$ submatrix of $Q_{G}$. Let $q_{ij}$ denote the coordinates of $Q_{G}$. When $G=G_1$ we have
\begin{equation*}
    q_{00} = x_{0}, \hspace{10pt}  \text{and} \hspace{10pt} q_{0j} = q_{j0} = 0 \hspace{10pt} \text{ for } \hspace{10pt} j\geq 1 
\end{equation*}
and when $G=G_2$, we have
\begin{equation*}
    q_{00} = x_{0} + q_{01},\hspace{10pt} q_{11} = x_{1} + q_{01} \hspace{10pt}  \text{and} \hspace{10pt} q_{0j} = q_{1j} \hspace{10pt} \text{ for } \hspace{10pt} j\geq 2.
\end{equation*}

By Lemma \ref{lem:mu2nu}, there are hyperelliptic quasi-involutions $\tau$ on $\Sigma_{g+1}$ and $\tau'$ on $\Sigma_{g}$ such that 
\begin{align*}
\bv_{\tau}(G) = \sum_{
\substack{1\leq i\leq g \\ 1\leq j<k\leq g}}  \abb{i}{j}{k} \otimes b_{ijk} \hspace{10pt} \text{ in } \hspace{10pt} L(\Sigma_{g+1})_{R[G]}/H(\Sigma_{g+1})_{R[G]} \\
\bv_{\tau'}(G\setminus f) = \sum_{\substack{1\leq i\leq g\\ 1\leq j<k\leq g}}  \abb{i}{j}{k} \otimes b_{ijk} \hspace{10pt} \text{ in } \hspace{10pt} L(\Sigma_{g})_{R[G\setminus f]}/H(\Sigma_{g})_{R[G\setminus f]}
\end{align*}
where $b_{ijk}$'s are linear forms in $R[G\setminus f]\subset R[G]$. Implicit in this description is that $b_{0jk} = 0$ and $b_{i0k}=0$. Because $G$ is Ceresa-Zharkov trivial, by Proposition \ref{prop:sufficientWCT}, there is a $\bu\in F_1(L/H)$, say
\begin{equation*}
\bu= \sum_{\substack{0\leq i<j \leq g \\ 0\leq k \leq g}} a_{ijk} \cdot \aab{i}{j}{k} 
\end{equation*}
with $a_{ijk} \in \mathbb{Z}$ such that $\bw_{\tau}(G) = (\delta_G-I)^2(\bu)$. We will show that $\bw_{\tau'}(G\setminus f)$ lies in the image $(\delta_{G\setminus f}-I)^2(F_1(L(\Sigma_g)/H(\Sigma_g)))$, whence $G\setminus f$ is Ceresa-Zharkov trivial by Proposition \ref{prop:sufficientWCT}. Because $b_{0jk} = 0$, we have
\begin{equation}
\label{eq:crstCeresa}
     c_{rst} := \sum_{i=0}^{g} (b_{irs}q_{ti} - b_{irt}q_{si} + b_{ist}q_{ri}) = \sum_{i=1}^{g} (b_{irs}q_{ti} - b_{irt}q_{si} + b_{ist}q_{ri})
\end{equation}
for $0\leq r<s<t \leq g$. Therefore, by Lemma \ref{lem:image1}, we have
\begin{align*}
    \bw_{\tau}(G) &= (\delta_G-I)(\bv_{\tau}(G)) = \sum_{0\leq r<s<t \leq g} \bbb{r}{s}{t} \otimes c_{rst} \hspace{15pt} \text{and} \\   \bw_{\tau'}(G\setminus f) &= (\delta_{G\setminus f}-I)(\bv_{\tau'}(G\setminus f)) = \sum_{1\leq r<s<t \leq g} \bbb{r}{s}{t} \otimes c_{rst}.
\end{align*}
By the equality $\bw_{\tau}(G) = (\delta_{G}-I)^2(\bu)$ and Lemma \ref{lem:image2}, we have
\begin{align}
\label{eq:crstdeltaG}
    c_{rst} = 2\sum_{0\leq i<j \leq g}
\begin{vmatrix}
q_{ri} & q_{rj} & a_{ijr} \\
q_{si} & q_{sj}  & a_{ijs} \\
q_{ti} & q_{tj}  & a_{ijt} 
\end{vmatrix}.
\end{align}
Set
\begin{equation*}
        \bu'= \sum_{\substack{1\leq i<j \leq g \\ 1\leq k \leq g}} a_{ijk} \cdot \aab{i}{j}{k}.
\end{equation*}
Then 
\begin{equation*}
    (\delta_{G\setminus f}-I)^2(\bu') = \sum_{1\leq r<s<t \leq g}  \bbb{r}{s}{t} \otimes c_{rst}' \hspace{15pt} \text{where} \hspace{15pt} c_{rst}' = 2\sum_{1\leq i<j \leq g}
\begin{vmatrix}
q_{ri} & q_{rj} & a_{ijr} \\
q_{si} & q_{sj}  & a_{ijs} \\
q_{ti} & q_{tj}  & a_{ijt} 
\end{vmatrix}. 
\end{equation*}

Next let's compute the difference 
\begin{align*}
&\bw_{\tau'}(G\setminus f) -
(\delta_{G\setminus f}-I)^2(\bu')  = \sum_{1\leq r<s<t \leq g} \bbb{r}{s}{t} \otimes (c_{rst} - c_{rst}')
\end{align*}
where
\begin{equation*}
    c_{rst} - c_{rst}' =  2\sum_{j=1}^g
\begin{vmatrix}
q_{r0} & q_{rj} & a_{0jr} \\
q_{s0} & q_{sj}  & a_{0js} \\
q_{t0} & q_{tj}  & a_{0jt} 
\end{vmatrix}
\hspace{10pt} \text{for} \hspace{10pt} 
1\leq r < s < t \leq g.
\end{equation*}
If we are in the first case, i.e., $G=G_1$ and $f=a$ is a loop edge, then $q_{j0}=0$ for $j\geq 1$, so the above formula implies $c_{rst}-c_{rst}' = 0$, whence $\bw_{\tau'}(G\setminus f) = (\delta_{G\setminus f}-I)^2(\bu')$. 

For the rest of the proof, suppose we are in the second case, i.e., $G=G_2$ and $f=b=e_0$ is parallel to the edge $c=e_1$. Set
\begin{equation*}
 \bu''= \sum_{\substack{2\leq j \leq g\\ 1\leq k \leq g}} a_{0jk} \cdot \aab{1}{j}{k}.
\end{equation*}
We have
\begin{align*}
    (\delta_{G\setminus f} -I)^2(\bu'') = \sum_{1\leq r<s<t \leq g} \bbb{r}{s}{t} \otimes  c_{rst}'' \hspace{15pt} \text{where} \hspace{15pt} c_{rst}'' = 2 \sum_{j=2}^g 
    \begin{vmatrix}
    q_{r1} & q_{rj} & a_{0jr} \\
    q_{s1} & q_{sj} & a_{0js} \\
    q_{t1} & q_{tj} & a_{0jt} 
    \end{vmatrix}.
\end{align*}
When $r\geq 2$,  we have that $q_{r0} = q_{r1}$, and therefore $c_{rst} - c_{rst}' = c_{rst}''$. Now consider $r=1$. 
Since $x_0$ only appears in $q_{00}$ where $q_{00} = x_{0} + q_{01}$, the coefficient of $x_0$ in $c_{rst}$ must equal 0 for all $0\leq r<s<t\leq g$ by Formula \eqref{eq:crstCeresa}. By extracting the coefficient of $x_0$ in the expression of $c_{0st}$ from Formula  \eqref{eq:crstdeltaG},  we see that
\begin{equation}
\label{eq:coefx0}
    \sum_{j=1}^{g}  
    \begin{vmatrix}
 q_{sj}  & a_{0js} \\
 q_{tj}  & a_{0jt} 
\end{vmatrix} = 0. 
\end{equation}
Therefore,
\begin{align*}
    c_{1st}'' - (c_{1st} - c_{1st}') = 2 (q_{11}-q_{10}) \sum_{j=1}^g \begin{vmatrix}
    q_{sj} & a_{0js} \\
    q_{tj} & a_{0jt} 
    \end{vmatrix} =0
\end{align*}
where the last equality follows from Formula \eqref{eq:coefx0}. So $c_{rst} = c_{rst}' + c_{rst}''$ and therefore
\begin{align*}
    \bw_{\tau'}(G\setminus f) &=\sum_{1\leq r<s<t \leq g} \bbb{r}{s}{t} \otimes  c_{rst} \\
    &= \sum_{1\leq r<s<t \leq g} \bbb{r}{s}{t} \otimes  (c_{rst}' + c_{rst}'')
    = (\delta_{G\setminus f}-I)^2(\bu' + \bu''). \qedhere
\end{align*}
\end{proof}

Next, we show that Ceresa-Zharkov triviality may be detected at the level of $2$-connected components.

\begin{proposition}
\label{prop:WCT2connectedComponents}
A connected graph $G$ is Ceresa-Zharkov trivial if and only if its 2-connected components are Ceresa-Zharkov trivial. 
\end{proposition}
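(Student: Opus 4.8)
The plan is to treat the two implications separately, in each case first invoking Lemma~\ref{lem:contractSeparatingEdge} to contract all separating edges and reduce to the case that $G$ is bridgeless; then every $2$-connected component of $G$ has genus $\geq 1$, and those of genus $\leq 1$ are Ceresa--Zharkov trivial for trivial reasons (for them $\wedge^3 H=0$, so $L_R/H_R=0$), so only the blocks of genus $\geq 2$ will be at issue.

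For the ``only if'' direction I would show that each block $B$ of $G$ is obtained from $G$ by a finite sequence of moves of two types: deleting a loop edge not in $B$, and contracting a non-loop edge not in $B$. No such move involves an edge of $B$ (a non-loop edge joining two vertices of $B$ would, when added to $B$, yield a larger $2$-connected subgraph, contradicting maximality of $B$), and the sequence can always be continued until the edge set is exactly $E(B)$, at which point the graph equals $B$. The first kind of move preserves Ceresa--Zharkov triviality by Proposition~\ref{prop:removeParallelEdge}(1) and the second by Proposition~\ref{prop:weakCeresaContract}, so triviality of $G$ passes to each $2$-connected component.

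For the ``if'' direction I would induct on the number of blocks. If $G$ has a cut vertex $v$, I write $G=G_1\cup_v G_2$ with $G_1\cap G_2=\{v\}$, the $G_i$ connected with strictly fewer blocks and (by bridgelessness) genus $g_i\geq 1$; since the blocks of $G_i$ are among those of $G$, the inductive hypothesis gives that each $G_i$ is Ceresa--Zharkov trivial, so it suffices to prove that a wedge $G=G_1\cup_v G_2$ of two connected Ceresa--Zharkov trivial graphs is again Ceresa--Zharkov trivial. To prepare this I would choose a spanning tree of $G$ restricting to spanning trees of $G_1$ and $G_2$ and take the symplectic basis of \S\ref{sec:graphs} accordingly, so that $H=H^{(1)}\oplus H^{(2)}$ is a symplectic-orthogonal splitting with $H^{(i)}\cong H_1(G_i)$, the polynomial ring factors as $R[G]=R[G_1]\otimes_{\Z}R[G_2]$, and $Q_G=\diag(Q_{G_1},Q_{G_2})$. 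From \eqref{eq:deltaell} and symplectic-orthogonality one checks that $\delta_e$, for $e\in E(G_2)$, acts as the identity on the submodule $(\wedge^3 H^{(1)})\otimes R[G_1]$ of $L_R$; hence on this submodule $\delta_G-I$, the operator $\sum_e(\delta_e-I)$, and $\psi_G$ all coincide with the corresponding operators of $G_1$ acting on $L(G_1)_{R[G_1]}$, and symmetrically for $G_2$.

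The main obstacle will be realizing the wedge topologically, in the spirit of Lemma~\ref{lem:mu2nu}: I would embed $G$ in $\Sigma_g$ so that $G_i$ lies in a genus-$g_i$ subsurface, the two subsurfaces glued along a single separating circle, and choose a hyperelliptic quasi-involution $\tau=\tau_1\tau_2$ preserving this splitting with each $\tau_i$ the ``good'' quasi-involution of Proposition~\ref{prop:CeresaF2R1} for $G_i$ (realized by a handlebody/meridian argument as in \cite[Theorem~6.6]{CEL}). Granting this, for $e\in E(G_i)$ the twist $T_{\ell_e}$ is supported in the $i$-th subsurface and commutes with $\tau_{3-i}$, so the bounding-pair formula identifies $J([T_{\ell_e},\tau])$ with the image under $\wedge^3 H^{(i)}\hookrightarrow L_R\to L_R/H_R$ of the Johnson class computed inside $\Sigma_{g_i}^1$; summing over $e$, $\bv_\tau(G)$ becomes the image in $L_R/H_R$ of the sum of the cocycles $\bmu_{\tau_i}(G_i)\in L(G_i)_{R[G_i]}$ (in the notation of \S\ref{sec:CeresaZharkov}). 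I would then finish as follows. Triviality of $G_i$, via Proposition~\ref{prop:sufficientWCT} applied with the good $\tau_i$, yields $\bu_i\in F_1 L(G_i)$ with $(\delta_{G_i}-I)\bmu_{\tau_i}(G_i)\equiv(\delta_{G_i}-I)^2(\bu_i)$ modulo $H(G_i)_R$; since $\bmu_{\tau_i}(G_i)\in F_2 L(G_i)\otimes R_1$, both sides of this congruence lie in $F_3 L(G_i)\otimes R_2$, and because $F_3 L\cap H=0$ they are in fact equal in $L(G_i)_R$. Applying $\delta_G-I$ to the formula for $\bv_\tau(G)$ and using that $\delta_G-I$ and its square restrict as in the previous paragraph on each $\wedge^3 H^{(i)}$, I get $\bw_\tau(G)=(\delta_G-I)^2(\bu_1+\bu_2)$ with $\bu_1+\bu_2\in F_1 L\subset F_1(L/H)$, so $\bw_\tau(G)\in(\delta_G-I)^2(F_1(L/H))$ and $G$ is Ceresa--Zharkov trivial by the converse half of Proposition~\ref{prop:sufficientWCT}. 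This completes the induction, and together with the first implication proves the proposition. The genuinely delicate points are the surface realization of the wedge and the compatible choice of $\tau$; everything else is bookkeeping with the block-diagonal structure of $Q_G$ and the formulas of \S\ref{sec:polynomialAlgebra}.
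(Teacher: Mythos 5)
Your proposal is correct and follows essentially the same route as the paper: the ``only if'' direction reduces to each block by contracting edges outside it and deleting the resulting loops (Propositions \ref{prop:weakCeresaContract} and \ref{prop:removeParallelEdge}, exactly the paper's spanning-tree argument), and the ``if'' direction realizes the one-point union along a separating curve with a compatible quasi-involution, identifies $\bw_\tau(G)$ with $\bw_{\tau_1}(G_1)+\bw_{\tau_2}(G_2)$ via the lifts $\bmu_{\tau_i}$ and $F_3L\cap H=0$, and concludes from the block-diagonal form of $Q_G$ together with Proposition \ref{prop:sufficientWCT}. The only slip is notational: $H^{(i)}$ should be $H_1(\Sigma_{g_i},\Z)$ (of rank $2g_i$), not $H_1(G_i)$.
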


\begin{proof}
By induction, it suffices to consider the case where $G$ has two 2-connected components  $G_1$ and $G_2$. Choose a separating curve $\gamma$ and an arrangement of simple closed curves $\Lambda = \{\ell_e \, : \, e\in E(G) \}$ such that
\begin{itemize}
    \item[-] the dual graph of $\Lambda$ is $G$;
    \item[-] cutting along $\gamma$ separates $\Sigma_g$ into two subsurfaces $S_1\cong \Sigma_{g_1}^1$ and $S_2 \cong \Sigma_{g_2}^1$;
    \item[-] $\ell_e$ lies in $S_i$ whenever $e\in E(G_i)$. 
\end{itemize}
Denote by $H^{(i)} = H_1(\Sigma_{g_i}, \Z)$ and  $L^{(i)} = \wedge^3H^{(i)}$. The inclusion $S_i\subset \Sigma_g$ allows us to view $F_qL^{(i)} \otimes R$  as an $R$-submodule of $F_qL_R$ for $q=0,\ldots,3$.
Choose hyperelliptic quasi-involutions $\tau_1'$ of $S_1$ and $\tau_2'$ of $S_2$ such that $\bmu_{\tau_i'}(\Lambda_i')$ lies in $F_2L^{(i)} \otimes R_1$; this is possible by Proposition \ref{prop:CeresaF2R1}.  Let $\tau$ be the hyperelliptic quasi-involution of $\Sigma_g$ obtained that restricts to $\tau_i'$ on $S_i$.  Denote by $\tau_i\in \Mod(\Sigma_{g_i})$ the image of $\tau'$ under the natural map $\Mod(\Sigma_{g_i}^1) \to \Mod(\Sigma_{g_i})$; thus $\tau_i$ is a hyperelliptic quasi-involution of $\Sigma_{g_i}$. Then  $\bv_{\tau_1}(G_1), \bv_{\tau_2}(G_2), \bv_{\tau}(G)$ are in $F_2L\otimes R_1$, and therefore  $\bw_{\tau_1}(G_1), \bw_{\tau_2}(G_2), \bw_{\tau}(G)$ are in $F_3L\otimes R_2$.

Consider the following commutative diagram:
\begin{center}
\begin{equation}
\label{eq:L1L2L}
\begin{tikzcd} 
(F_2L^{(1)} \oplus F_2L^{(2)})_R \arrow[hookrightarrow]{r} \arrow[d,swap,"(\delta_{G_1}-I) \oplus (\delta_{G_2}-I)"]& F_2L_R \arrow[r] \arrow[d,"\delta_G-I"] & F_2(L/H)_R \arrow[d,"\delta_G-I"] \\
(F_3L^{(1)} \oplus F_3L^{(2)})_R \arrow[hookrightarrow]{r} & F_3L_R \arrow[r,"\sim"] & F_3(L/H)_R 
\end{tikzcd}
\end{equation}
\end{center}
The right arrow on the bottom row is an isomorphism by Formula \eqref{eq:explicitFq}. Consider $\bmu := \bmu_{\tau_1'}(\Lambda_1') + \bmu_{\tau_2'}(\Lambda_2')$ in $(F_2L^{(1)} \oplus F_2L^{(2)})_R$. The composition of the top two arrows of the diagram in \eqref{eq:L1L2L} maps $\bmu$ to $\bv_{\tau}(G)$, which maps to $\bw_{\tau}(G)$ by the right vertical arrow. The composition along the bottom takes $\bmu$ to $\bw_{\tau_1}(G_1) + \bw_{\tau_2}(G_2)$. Thus
\begin{equation}
\label{eq:tau1tau2eqtau}
\bw_{\tau_1}(G_1) + \bw_{\tau_2}(G_2) = \bw_{\tau}(G).
\end{equation}

Suppose $G_1$ and $G_2$ are Ceresa-Zharkov trivial. By Proposition \ref{prop:sufficientWCT}, there are elements $\bu_i\in F_1(L^{(i)}/H^{(i)})$ such that $(\delta_{G_i}-I)^2(\bu_i) = \bw_{\tau_i}(G_i)$.  The restriction of  $(\delta_G-I)^2$ to $F_1L^{(1)} \oplus F_1L^{(2)}$ is $(\delta_{G_1}-I)^2 \oplus (\delta_{G_2}-I)^2$; this follows from Formula \eqref{eq:deltaGQGformulas} and the fact that $Q_G = Q_{G_1}\oplus Q_{G_2}$. So  $(\delta_{G}-I)^2(\bu_1+\bu_2) = \bw_{\tau}(G)$, and therefore $G$ is Ceresa-Zharkov trivial. 

Conversely, suppose $G$ is Ceresa-Zharkov trivial. Let $T$ be a spanning tree of $G_1$. Then $G/T$  is Ceresa-Zharkov trivial by Proposition \ref{prop:weakCeresaContract}. Observe that $G/T$ is obtained from $G_2$ by attaching $g_1$ loop edges to a single vertex of $G_2$. So $G_2$ is Ceresa-Zharkov trivial by Proposition \ref{prop:removeParallelEdge}.  Swapping the roles of $G_1$ and $G_2$, we conclude that $G_1$ is also Ceresa-Zharkov trivial. 
\end{proof}

Recall from \cite[\S 3]{Corey} that a graph $G$ is \textit{strongly of hyperelliptic type} if there is a choice of edge lengths of $G$ so that the resulting tropical curve is hyperelliptic. Such graphs are Ceresa-Zharkov trivial by the following proposition. 

\begin{proposition}
\label{prop:tauSwapSeparatingPair}
Let $G$ be a graph that has a separating pair of edges $(f,f')$, and let $\Lambda = \{\ell_e \, : \, e\in E(G)\}$ be a collection of pairwise disjoint simple closed curves on $\Sigma_g$ with dual graph $G$. If $\tau$ is a hyperelliptic quasi-involution such that $\tau(\ell_{f}) = \ell_{f'}$, then
\begin{equation*}
    (\delta_{G}-I)(J([T_{\ell_{f}}, \tau])\otimes 1)  =  0.
\end{equation*}
In particular, if $G$ is strongly of hyperelliptic type, then $G$ is Ceresa-Zharkov trivial.
\end{proposition}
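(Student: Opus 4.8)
The plan is to treat the displayed identity first and then deduce the ``in particular''. For the identity, I would begin by rewriting the commutator: since $\tau\in\Mod(\Sigma_g)$ is orientation–preserving, $\tau T_{\ell_f}\tau^{-1}=T_{\tau(\ell_f)}=T_{\ell_{f'}}$ (as recalled in \S\ref{subsec:multitwists}), so $[T_{\ell_f},\tau]=T_{\ell_f}T_{\ell_{f'}}^{-1}$. Because $(f,f')$ is a separating pair of $G$, the curves $\ell_f,\ell_{f'}$ are disjoint and $\ell_f\cup\ell_{f'}$ separates $\Sigma_g$ — the components of $\Sigma_g\setminus(\ell_f\cup\ell_{f'})$ correspond exactly to the components of $G\setminus\{f,f'\}$, so there are two of them, $S$ and its complement — whence $T_{\ell_f}T_{\ell_{f'}}^{-1}$ is a bounding pair map (or is trivial, in which case there is nothing to prove). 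By the description of the Johnson homomorphism on $\cI_g$ in \S\ref{sec:Johnson}, $J([T_{\ell_f},\tau])=\omega_S\wedge[\ell_f]\bmod H$, where $\partial S=\ell_f\cup\ell_{f'}$ and $\omega_S\in\wedge^2 W$ is the restricted symplectic form of $S$.

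Next I would compute $(\delta_G-I)$ on the representative $\omega_S\wedge[\ell_f]\otimes 1\in L_R$. The key point is that every edge $e$ of $G$ lying on the side of $\{f,f'\}$ not meeting $S$ has $\ell_e$ disjoint from $S$, so $\langle[\ell_e],h\rangle=0$ for all $h$ in the image of $H_1(S,\Z)$; since $\omega_S$ and $[\ell_f]$ lie in that image, $\delta_{\ell_e}$ fixes $\omega_S\wedge[\ell_f]\otimes 1$. As the $\delta_\ell$'s commute, writing $\hat\delta$ for the product of the remaining ones (those with $\ell$ isotopic into $S$, together with $\delta_{\ell_f},\delta_{\ell_{f'}}$), we get $(\delta_G-I)(\omega_S\wedge[\ell_f]\otimes 1)=(\hat\delta-I)(\omega_S\wedge[\ell_f]\otimes 1)$ with $\hat\delta\in\pB_S$. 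Now apply Proposition \ref{prop:BonOmega}(2) with $h=[\ell_f]$: the term $(\hat\delta-I)([\ell_f])\wedge\omega_S$ vanishes because $\ell_f$ is disjoint from every curve in $\hat\delta$, and the term $\hat\delta([\ell_f])\wedge\eta=[\ell_f]\wedge\eta$ vanishes because $\eta\in(V\wedge W)_R$ and $V=\langle[\ell_f]\rangle$ (the two boundary circles of $S$ are $\ell_f,\ell_{f'}$ and $[\ell_{f'}]=-[\ell_f]$ in $H$). Hence $(\delta_G-I)(J([T_{\ell_f},\tau])\otimes 1)=0$.

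For the ``in particular'', $G$ strongly of hyperelliptic type means, after possibly subdividing (harmless for Ceresa--Zharkov triviality by Proposition \ref{prop:stableWCT}), that $G$ carries a hyperelliptic involution $\sigma$, i.e.\ an automorphism with $T:=G/\sigma$ a tree. Following the construction in the proof of \cite[Proposition~4.6]{CEL}, I would choose the arrangement $\Lambda$ and a hyperelliptic quasi-involution $\tau$ of $\Sigma_g$ realizing $\sigma$, so that $\tau(\ell_e)=\ell_{\sigma(e)}$ for every $e\in E(G)$. Since $\delta_G-I$ is $R$-linear, $\bw_\tau(G)=\sum_e x_e\,(\delta_G-I)(J([T_{\ell_e},\tau])\otimes 1)$, so it suffices to kill each summand. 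If $\sigma(e)=e$, then $[T_{\ell_e},\tau]=T_{\ell_e}T_{\ell_e}^{-1}=1$ and the term vanishes. If $\sigma(e)=e'\neq e$, then $\{e,e'\}$ is a $\sigma$-orbit that projects under $\pi\colon G\to T$ to a single edge $\bar e$ of the tree $T$ (it cannot collapse to a vertex, since $T$ has no loops); removing $\bar e$ splits $T$ as $T_1\sqcup T_2$, and pulling back gives $G\setminus\{e,e'\}=\pi^{-1}(T_1)\sqcup\pi^{-1}(T_2)$, a disconnection, so $(e,e')$ is a separating pair and the term vanishes by the first part. Therefore $\bw_\tau(G)=0$ and $G$ is Ceresa--Zharkov trivial.

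The main obstacle is the interface with surface topology used twice above: identifying $J([T_{\ell_f},\tau])$ with $\omega_S\wedge[\ell_f]$ (a routine translation of ``$\{f,f'\}$ separating'' into ``$(\ell_f,\ell_{f'})$ is a bounding pair''), and—more delicately—peeling $\delta_G$ down to $\hat\delta\in\pB_S$ so that Proposition \ref{prop:BonOmega} applies, which requires checking carefully that the far-side curves act trivially and that the error term $\eta$ lands in $[\ell_f]\wedge W$. For the second assertion, the point needing external input (from \cite{CEL} and \cite{Corey}) is the existence of a hyperelliptic quasi-involution of $\Sigma_g$ realizing the graph-theoretic hyperelliptic involution $\sigma$ compatibly with $\Lambda$; the combinatorial claim that $\sigma$-nonfixed edges form separating pairs is then immediate from the tree quotient.
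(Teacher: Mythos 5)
Your argument is correct and follows essentially the same route as the paper's proof: identify $J([T_{\ell_f},\tau])$ with $\omega_S\wedge[\ell_f]$ via the bounding-pair formula, reduce the action of $\delta_G-I$ to the subsurface containing $S$, and kill both terms of Proposition~\ref{prop:BonOmega}(2) using disjointness of the curves and $\dim V=1$. The only cosmetic differences are that you pass to $\pB_S$ by a direct disjointness argument where the paper uses the explicit formula \eqref{eq:deltaGQGformulas}, and that you prove directly the combinatorial fact (which the paper cites from \cite[Proposition~2.5]{Corey}) that $\sigma$-swapped edges form a separating pair.
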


\begin{proof}
The removal of $\{f,f'\}$ from $G$ separates this graph into two graphs $G_1$ and $G_2$ of gerena $g_1$ and $g_2$, respectively. Order the edges of $G$ by $e_1, \ldots, e_g, e_{g+1}, \ldots, e_{n}$ so that
\begin{itemize}
    \item[-] $e_1,\ldots,e_{g_1} \in E(G_1)$, $e_{g_1+1},\ldots,e_{g-1} \in E(G_2)$, and $e_{g} = f$; 
    \item[-] $e_{g+1}, \ldots, e_{n}$ are the edges of a spanning tree of $G$.
\end{itemize}
Necessarily $f'$ must be among the edges of the spanning tree. Let $\alpha_1,\ldots,\alpha_g,\beta_1,\ldots,\beta_g$ be the basis from \S \ref{sec:graphs}. With respect to this basis, we have
\begin{equation*}
    Q_G = \begin{bmatrix}
    Q_{G_1} & 0 & * \\
    0 & Q_{G_2} & * \\
    * & * & x_{f} 
    \end{bmatrix}.
\end{equation*}
Cutting along $\ell_{f}$ and $\ell_{f'}$ separates the surface $\Sigma_g$ into two subsurfaces $S_1$ and $S_2$. The homology of $S_i$ splits as a direct sum $V\oplus W_i$ where $V = \Z \cdot [\ell_f]$ and $W_i$ is identified with a symplectic subspace of $H$ under the map $H_1(S_i,\Z) \to H$. The intersection 2-form $\omega_i$ of $W_i$ is the restriction of $\omega$ to $W_i$, so
\begin{equation*}
    \omega_1 = \sum_{j=1}^{g_1} \alpha_j \wedge \beta_j \hspace{15pt} \text{and}  \hspace{15pt} \omega_2 = \sum_{j=g_1+1}^{g-1} \alpha_j \wedge \beta_j.
\end{equation*}
Orient $[\ell_{f}]$ such that $S_1$ lies to its right. Because $\tau(\ell_{f}) = \ell_{f'}$, we have that $\tau T_{\ell_{f}} \tau^{-1} = T_{\ell_{f'}}$, and hence
\begin{equation*}
    J([T_{\ell_{f}}, \tau]) = J(T_{\ell_{f}} T_{\ell_{f'}}^{-1}) = \omega_1\wedge [\ell_{f}] = - \omega_2 \wedge [\ell_{f'}]. 
\end{equation*}
The expression $(\delta_{G}-I)(J(T_{\ell_{f}} T_{\ell_{f'}}^{-1}))$ is equal to
\begin{equation*}
    (\delta_{G}-I)(\omega_1\wedge [\ell_{f}]) = \sum_{j=1}^{g_1} (Q_{G} \, \alpha_j ) \wedge \beta_j \wedge \beta_g = \sum_{j=1}^{g_1} (Q_{G_1} \, \alpha_j ) \wedge \beta_j \wedge \beta_g  = (\delta_{G_1}-I)(\omega_1\wedge [\ell_{f}]).
\end{equation*}
We use Formula \eqref{eq:deltaGQGformulas} in the first equality and $Q_{G}(\alpha_j) -  Q_{G_1}(\alpha_j) \in \Z\cdot \beta_g$ for $j\leq g_1$ in the second.
By Proposition \ref{prop:BonOmega} applied to $V$ and $W=W_1$, we have
\begin{equation*}
    (\delta_{G_1}-I)(\omega_1\wedge [\ell_{f}]) = \omega_1\wedge (\delta_{G_1}-I) ([\ell_{f}]) + \eta \wedge \delta_{G_1} ([\ell_f]).
\end{equation*}
for some $\eta \in V\wedge W_1$.  Since the curves in $\Lambda$ are disjoint, we have that $\delta_{G_1}([\ell_f]) = [\ell_{f}]$. This implies that the first summand above is 0, and the second summand is in $V\wedge W_1\wedge V$, which must also be 0 since $\dim V = 1$. So we have $(\delta_{G}-I)(J([T_{\ell_{f}}, \tau])) = 0$, as required. 

Now suppose $G$ is strongly of hyperelliptic type. By Proposition \ref{prop:stableWCT}, we may assume that $G$ is stable. Let $\Gamma$ be a hyperelliptic tropical curve with underlying graph $G$, and $\sigma$ be a hyperelliptic involution of $\Gamma$. There is a hyperelliptic quasi-involution $\tau$ of $\Sigma_g$ such that $\tau(\ell_{e}) = \ell_{\sigma(e)}$ by \cite[Lemma 4.5]{CEL}.  By \cite[Proposition~2.5]{Corey}, for any edge $e\in E(G)$, we have that $\sigma(e) = e$ or $\sigma(e) = f$ where $(e,f)$ is a separating pair. In the first case, $[T_{\ell_e},\tau] = 1$. In the second case, $(\delta_G-I)(J([T_{\ell_e},\tau])) = 0$ by the first part of this proposition.  We conclude that $\bw_{\tau}(G) = 0$, i.e., $G$ is Ceresa-Zharkov trivial. 
\end{proof}

To prove Ceresa-Zharkov trivial implies hyperelliptic type, we use in an essential way the main theorem of \cite{Corey}, which states that a graph is of hyperelliptic type if and only if it has no $K_4$ or $L_3$ minor. We prove directly that these graphs are not Ceresa-Zharkov trivial, using Proposition \ref{prop:GtoGamma}.

\begin{proposition}
\label{prop:K4Nontrivial}
The graph $G=K_4$ is not Ceresa-Zharkov trivial.
\end{proposition}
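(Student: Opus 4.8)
The plan is to produce an explicit tropical curve $\Gamma$ with underlying graph $K_4$ and explicitly chosen edge-lengths for which the Ceresa-Zharkov class $\bw(\Gamma)$ is nonzero in $C(\delta_\Gamma)$; by the contrapositive of Proposition \ref{prop:GtoGamma}, this forces $K_4$ itself to be Ceresa-Zharkov nontrivial. Equivalently, using Proposition \ref{prop:sufficientWCT}, one may try to show directly that for \emph{no} hyperelliptic quasi-involution $\tau$ does the class $\bw_\tau(K_4)$ lie in $(\delta_{K_4}-I)^2(F_1(L/H))$. I expect the first route---specializing to a concrete $\Gamma$---to be cleaner, since it lets us work with honest integer matrices rather than with the full module structure over $R[K_4]$.

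First I would fix the symplectic basis $\alpha_1,\alpha_2,\alpha_3,\beta_1,\beta_2,\beta_3$ and the matrix $Q_{K_4}$ from Example \ref{ex:K4}, so $\delta_{K_4}$ has the block form \eqref{eq:deltaQ}. Next, I would invoke Proposition \ref{prop:CeresaF2R1} to pick a hyperelliptic quasi-involution $\tau$ with $\bv_\tau(K_4) \in F_2(L/H)\otimes R_1$; for $K_4$ one can take the standard hyperelliptic involution of $\Sigma_3$ realized by the symmetric picture in Figure \ref{fig:k4toSurface}, and compute each $J([T_{\ell_{e_i}},\tau])$ via the bounding-pair formula of \S\ref{sec:Johnson}. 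This gives $\bv_\tau(K_4) = \sum_i \abb{\cdot}{\cdot}{\cdot}\otimes x_{e_i}$ as an explicit element of $F_2(L/H)\otimes R_1$. Applying $\delta_{K_4}-I$ and Lemma \ref{lem:image1} produces $\bw_\tau(K_4) = \sum_{r<s<t}\bbb{r}{s}{t}\otimes c_{rst}$ with each $c_{rst}$ a concrete quadratic form in $x_1,\dots,x_6$.

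Then I would, using Lemma \ref{lem:image2}, describe the image $(\delta_{K_4}-I)^2(F_1(L/H))$: it is the span, over integer coefficients $a_{ijk}$, of the vectors of $3\times 3$ determinants $\sum_{i<j}\det(\,\cdot\,)$ indexed by $r<s<t$. Since $g=3$ there is exactly one triple $(r,s,t)=(1,2,3)$, so $\grF{3}{F}(L/H)$ is rank one and $C(\delta_{K_4})$ is the cokernel of a single map; the containment \eqref{eq:wtau} becomes a concrete divisibility/membership statement for the quadratic form $c_{123}$ in the ideal-like subset generated by the determinant expressions. By Lemma \ref{lem:conditionsWCT}, varying $\tau$ only changes $\bv_\tau$ by $\sum_e(\delta_e-I)(J(t))$ for $t\in\cI_3$, hence changes $\bw_\tau$ by $\psi_{K_4}(J(t))$, which by the computation in the proof of Proposition \ref{prop:sufficientWCT} lies in $\psi_{K_4}(L/H)=(\delta_{K_4}-I)^2(F_1(L/H))$ on the relevant graded piece; so it suffices to check nonvanishing for the one $\tau$ above. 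The main obstacle is the bookkeeping: correctly computing the $J([T_{\ell_{e_i}},\tau])$ for the specific arrangement of Figure \ref{fig:k4toSurface} (getting the bounding-pair subsurfaces and orientations right), and then verifying---most transparently by specializing $x_{e_i}$ to small positive integers and reducing modulo a well-chosen prime---that the resulting $c_{123}$ does not lie in the subgroup generated by the determinant vectors. Once a single such specialization witnesses nontriviality, $K_4$ is not Ceresa-Zharkov trivial.
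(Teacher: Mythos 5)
Your proposal is correct and follows essentially the same route as the paper: compute $\bw_\tau(K_4)$ from an explicit cocycle $\bv_\tau(K_4)\in F_2(L/H)\otimes R_1$ and the matrix $Q_{K_4}$, specialize to the tropical curve with all edge lengths $1$, and check via Proposition \ref{prop:GtoGamma} that the resulting class $-2\,\bbb{1}{2}{3}$ is not in $(\delta_\Gamma-I)^2(F_1(L/H))$, which is spanned by $4\,\bbb{1}{2}{3}$. The only difference is that the paper imports the cocycle and the image computation from the earlier work of Corey--Ellenberg--Li rather than recomputing the Johnson images from scratch.
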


\begin{proof}
By \cite[Example 7.2]{CEL}, a Ceresa cocycle $\bv_{\tau}(G)$ for the graph $K_4$ is given by
\begin{equation}
\label{eqn:muK4}
\bv_{\tau}(G) =  \abb{1}{1}{2} \otimes x_2 + (-\abb{2}{1}{2} - \abb{2}{2}{3} + \abb{2}{1}{3}) \otimes x_5.
\end{equation}
which lies in $F_2(L/H) \otimes R_1$.  The matrix $Q_G$ is recorded in Formula \eqref{eq:QK4}. We get
\begin{equation*}
\bw_{\tau}(G) = -2 \cdot \bbb{1}{2}{3} \otimes x_2x_5. 
\end{equation*}
which lies in $F_3(L/H) \otimes R_2$. By Proposition \ref{prop:GtoGamma}, it is sufficient to show that the tropical curve $\Gamma = (K_4,c)$ is not Ceresa-Zharkov trivial for some edge-length function $c:E(K_4) \to \Z_{>0}$. Let $c$ be the function that assigns the length 1 to each edge.  Then $\bw(\Gamma) = -2 \, \bbb{1}{2}{3}$, whereas $(\delta_{\Gamma}-I)^2(F_1(L/H))$ is spanned by $4\, \bbb{1}{2}{3}$, see \cite[Remark~7.3]{CEL}. So $\bw(\Gamma) \neq 0$, as required.
\end{proof}

\begin{remark}
\label{rmk:K4NotWCT}
    Define $c:E(K_4) \to \Z_{>0}$ by $c(e_1) = 2$ and $c(e_i)=1$ for $i=2,\ldots,6$ and let $\Gamma=(K_4,c)$. Then $(\delta_{\Gamma}-I)^2(F_1(L/H)) = F_3L$, and hence $\bw(\Gamma) = 0$. Therefore $\Gamma$ is Ceresa-Zharkov trivial, but clearly not of hyperelliptic type. 
\end{remark}

\begin{proposition}\label{prop:L3Nontrivial}
The graph $G=L_3$ is not Ceresa-Zharkov trivial.
\end{proposition}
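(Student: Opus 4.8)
The plan is to mimic the proof of Proposition~\ref{prop:K4Nontrivial}: produce an explicit Ceresa cocycle for $L_3$, push it through $\delta_{L_3}-I$, and then specialize to a single tropical curve via Proposition~\ref{prop:GtoGamma}.

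Concretely, I would first fix an embedding $\iota\colon L_3\hookrightarrow\Sigma_g$ and a Lagrangian arrangement $\Lambda=\{\ell_e : e\in E(L_3)\}$ with dual graph $L_3$, together with the symplectic basis $\alpha_1,\dots,\alpha_g,\beta_1,\dots,\beta_g$ of $H$ attached to a spanning tree as in \S\ref{sec:graphs}; with respect to this basis $\delta_{L_3}$ has the shape \eqref{eq:deltaQ}, and the entries $q_{ij}$ of $Q_{L_3}$ are the linear forms read off from the cycle combinatorics of $L_3$, exactly as in Example~\ref{ex:K4}. Next I would choose a hyperelliptic quasi-involution $\tau$ — e.g.\ the $180^\circ$ rotation of a symmetric model of $L_3$ in a handlebody, so that all $\ell_e$ are meridians, as in the proof of Lemma~\ref{lem:mu2nu} — for which each $J([T_{\ell_e},\tau])$ lies in $F_2L$ by \cite[Theorem~6.6]{CEL}. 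Since every $\ell_e$ is nonseparating (each $L_3\setminus e$ is connected), $[T_{\ell_e},\tau]=T_{\ell_e}T_{\tau(\ell_e)}^{-1}$ is a bounding-pair map once $\tau(\ell_e)$ is identified, and its Johnson image is computed from the formula in \S\ref{sec:Johnson}. This yields an explicit representative
\[
\bv_\tau(L_3)=\sum_{i;\,j<k}\abb{i}{j}{k}\otimes b_{ijk}\ \in\ F_2(L/H)\otimes R_1,\qquad b_{ijk}\in R_1 .
\]
(Alternatively, if $L_3$, or a graph tropically equivalent to it, already appears among the worked examples of \cite{CEL}, one may simply quote the cocycle from there.)

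Applying Lemma~\ref{lem:image1} — equivalently Formula~\eqref{eq:deltaGQGformulas} — then gives
\[
\bw_\tau(L_3)=(\delta_{L_3}-I)(\bv_\tau(L_3))=\sum_{r<s<t}\bbb{r}{s}{t}\otimes c_{rst},\qquad c_{rst}=\sum_i\bigl(b_{irs}q_{ti}-b_{irt}q_{si}+b_{ist}q_{ri}\bigr),
\]
an explicit element of $F_3L\otimes R_2$. By Proposition~\ref{prop:GtoGamma} it now suffices to exhibit one edge-length function $c\colon E(L_3)\to\Z_{>0}$ for which $\Gamma=(L_3,c)$ is not Ceresa-Zharkov trivial, i.e.\ for which $\bw(\Gamma)=\sum_{r<s<t}\bbb{r}{s}{t}\cdot c_{rst}(c)$ does not lie in $(\delta_\Gamma-I)^2(F_1(L/H))$. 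By \eqref{eq:psiF1} and Lemma~\ref{lem:image2}, that target is the concrete finite-index sublattice of $F_3L$ generated by twice the $2\times2$-minor combinations of the integer matrix $Q_\Gamma$ appearing in Lemma~\ref{lem:image2}. One then picks a convenient $c$ (trying all edge-lengths equal to $1$ first, as for $K_4$) and verifies by a direct computation that $\bw(\Gamma)$ does not belong to it.

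The main obstacle is the explicit determination of $\bv_\tau(L_3)$: one needs a sufficiently rigid topological model — the arrangement $\Lambda\subset\Sigma_g$, the symmetric quasi-involution $\tau$, and the resulting bounding-pair maps $T_{\ell_e}T_{\tau(\ell_e)}^{-1}$ — to write the Johnson images down unambiguously. Once that cocycle is fixed, the remaining steps are routine polynomial linear algebra over $\Z[x_e]$ and a single divisibility check; some care is still required in choosing $c$, since, as Remark~\ref{rmk:K4NotWCT} shows already for $K_4$, an unlucky choice of edge lengths can place $\bw(\Gamma)$ inside $(\delta_\Gamma-I)^2(F_1(L/H))$ even when the underlying graph is not Ceresa-Zharkov trivial.
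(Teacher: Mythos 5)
Your proposal follows essentially the same route as the paper: the authors quote the explicit cocycle $\bv_\tau(L_3)$ and the matrix $Q_{L_3}$ from \cite[Example 7.6]{CEL} (your suggested fallback), compute $\bw_\tau(L_3)=-2x_5x_6(\bbb{1}{2}{3}+\bbb{1}{2}{4})$, specialize via Proposition~\ref{prop:GtoGamma} to the tropical curve with all edge lengths equal to $1$, and verify by a direct (computer-assisted) lattice computation that $\bw(\Gamma)$ does not lie in $(\delta_\Gamma-I)^2(F_1(L/H))$. The plan is correct; the only missing content is the explicit numerical data, which the paper supplies by citation and a \texttt{sage} check exactly as you anticipate.
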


\begin{proof}
From \cite[Example 7.6]{CEL} by setting $c_7=c_8=c_9=0$, we get

\begin{equation*}
    Q_G = \begin{pmatrix}
    x_1 +  x_6 & 0 & x_6 & x_6 \\
    0 & x_2+x_5 & x_5 & x_5 \\
    x_6 & x_5 & x_3+x_5+x_6 & x_5+x_6 \\
    x_6 & x_5 & x_5+x_6 & x_4+x_5+x_6
    \end{pmatrix}.
\end{equation*}

\begin{align*}
\bv_{\tau}(G)= &(\abb{2}{2}{3} + \abb{2}{2}{4}-\abb{2}{1}{2}) \otimes x_6 \\
-&(\abb{1}{1}{2}+ \abb{1}{1}{3} + \abb{1}{1}{4})\otimes x_5.
\end{align*}
So we have
\begin{equation*}
    \bw_{\tau}(G) = -2x_5x_6(\beta_1\wedge\beta_2\wedge\beta_3+\beta_1\wedge\beta_2\wedge\beta_4).
\end{equation*}
By Proposition \ref{prop:GtoGamma}, it is sufficient to show that the tropical curve $\Gamma = (L_3,c)$ is not Ceresa-Zharkov trivial for some edge-length function $c:E(L_3) \to \Z_{>0}$. Let $c$ be the function that assigns the length 1 to each edge. Using \texttt{sage}, we see that  $(\delta_{\Gamma}-I)^2(F_1(L/H))$ is spanned by 
\begin{align*}
\begin{array}{ll}
     2\bbb{1}{2}{3} + 2\bbb{1}{2}{4} + 2\bbb{2}{3}{4}, & \; 4\bbb{1}{2}{4}, \\
    2\bbb{1}{3}{4} +2\bbb{2}{3}{4}, & 4\bbb{2}{3}{4},
\end{array}
\end{align*}
and $\bw_{\tau}(\Gamma) = -2\bbb{1}{2}{3} -2 \bbb{1}{2}{4}$. One can readily check that $ -2\bbb{1}{2}{3} -2 \bbb{1}{2}{4}$ is not in the $\Z$-span of the vectors listed above, and hence $L_3$ is not Ceresa-Zharkov trivial. 
\end{proof}

\begin{lemma}
\label{lem:K4L3minor}
Suppose $G$ is a graph with a minor $G'$ obtained by adding (parallel) edges to the complete graph $K_m$. Then there are edges $S_1\subset E(G)$ and $S_2 \subset E(G/S_1)$ such that $(G/S_1) \setminus S_2$ is $G'$, and every edge in $S_2$ is either a loop or parallel to another edge in $G/S_1$.  
\end{lemma}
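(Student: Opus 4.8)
The plan is to unwind ``$G'$ is a minor of $G$'' into a branch decomposition and then rearrange the operations so that all edge contractions are performed before any edge deletions.

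First I would recall the branch-set description of a minor: since $G'$ is a minor of $G$, there are pairwise disjoint vertex subsets $\{P_u\}_{u\in V(G')}$ of $V(G)$, each inducing a connected subgraph of $G$, together with an injection $\phi\colon E(G')\to E(G)$ such that $\phi(e)$ joins a vertex of $P_u$ to a vertex of $P_v$ whenever $e$ is an edge of $G'$ between $u$ and $v$. Since $G'$ is $K_m$ with parallel edges added, $G'$ has no loops, so each $\phi(e)$ joins two \emph{distinct} branch sets. We may and do assume $G$ is connected, which is the only case used; then I would enlarge the branch sets until they partition all of $V(G)$: as long as some vertex is uncovered, choose a connected component $C$ of the subgraph of $G$ induced on the uncovered vertices, use connectedness of $G$ to pick an edge joining $C$ to some $P_u$, and replace $P_u$ by $P_u\cup V(C)$, which is still connected and disjoint from the others. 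Iterating gives branch sets $P_u$ with $\bigcup_u P_u=V(G)$.

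Next, choose a spanning tree $T_u$ of the subgraph induced on each $P_u$ and set $S_1=\bigcup_u E(T_u)$. Contracting $S_1$ collapses each $P_u$ to a single vertex $\bar u$, so $V(G/S_1)=\{\bar u:u\in V(G')\}$, identified with $V(K_m)$, and each edge of $G/S_1$ is the image of a unique uncontracted edge of $G$: an edge of $G$ with both endpoints in one $P_u$ becomes a loop at $\bar u$, while an edge between $P_u$ and $P_v$ with $u\neq v$ becomes an edge joining $\bar u$ and $\bar v$. In particular $\{\phi(e):e\in E(G')\}$ is a set of non-loop edges of $G/S_1$ that forms a copy of $G'$ on the vertex set $V(K_m)$ (injectivity of $\phi$ ensures parallel edges of $G'$ stay parallel and nothing is identified). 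Set $S_2=E(G/S_1)\setminus\{\phi(e):e\in E(G')\}$. Then $(G/S_1)\setminus S_2$ has vertex set $V(K_m)$ and edge set $\{\phi(e)\}$, hence is isomorphic to $G'$, with no vertices outside $G'$.

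It remains to verify that every $\sigma\in S_2$ is a loop or is parallel to another edge of $G/S_1$. If the edge of $G$ underlying $\sigma$ lies inside a single branch set, then $\sigma$ is a loop. Otherwise $\sigma$ joins two distinct vertices $\bar u,\bar v$ of $G/S_1$; since $G'$ contains $K_m$ as a spanning subgraph, there is an edge $e_0$ of $G'$ between $u$ and $v$, and then $\phi(e_0)$ is an edge of $G/S_1$ between $\bar u$ and $\bar v$, so $\sigma$ is parallel to $\phi(e_0)$; moreover $\sigma\neq\phi(e_0)$ because $\phi(e_0)\notin S_2$. This is the key point, and the only place where the hypothesis on $G'$ is used: for a general minor, an inter-branch-set edge of $G/S_1$ lying outside the image of $\phi$ need not have a parallel partner, so the assumption that $G'$ contains \emph{all} of $K_m$ is essential. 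The only other point requiring care is arranging that the branch sets exhaust $V(G)$ (done above using connectedness of $G$), since a leftover vertex could be incident to an $S_2$-edge that is neither a loop nor parallel to anything.
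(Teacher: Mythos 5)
Your proof is correct, and it reaches the same punchline as the paper's but by a different route. The paper's proof is a three-line argument: it invokes the standard fact that the elementary minor operations can be reordered so that all contractions precede all deletions, writes $G' = (G/S_1)\setminus S_2$, observes that edge deletion preserves the vertex set so $V(G/S_1)=V(G')$, and then concludes exactly as you do --- since $G'$ has an edge between every pair of distinct vertices, any deleted edge is a loop or has a parallel partner surviving in $G/S_1$. You instead unwind the minor relation via the branch-set (model) description, extend the branch sets to a partition of $V(G)$ using connectedness, take $S_1$ to be a union of spanning trees of the branch sets, and let $S_2$ be the complement of the image of the edge injection. What your version buys is that it makes explicit two points the paper's one-liner glosses over: (i) that no vertex deletions are needed, i.e.\ that all of $V(G)$ can be absorbed into the branch sets --- this genuinely requires $G$ connected, which is the only case used in the proof of Theorem \ref{thm:WCTiffHET}, and you are right to flag it, since for disconnected $G$ the statement as literally written can fail; and (ii) that injectivity of the edge map is what keeps the parallel edges of $G'$ distinct after contraction. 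You also correctly isolate the single place where the hypothesis on $G'$ enters (completeness of the underlying simple graph), which is the same key point as in the paper. In short: same essential idea, a more self-contained and slightly more careful execution.
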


\begin{proof}
The order in which edges are contracted or removed does not matter when forming a graph minor, so suppose $G' = (G/S_1) \setminus S_2$. As contracting an edge drops the number of vertices by 1 and removing an edge preserves the number of vertices, we have that $V(G/S_1) = V(G')$. As there is an edge between any two vertices in $G'$, every edge in $S_1$ must be either a loop or parallel to some other edge. 
\end{proof}

\begin{theorem}
\label{thm:WCTiffHET}
	A connected graph $G$ is Ceresa-Zharkov trivial if and only if it is of hyperelliptic type. 
\end{theorem}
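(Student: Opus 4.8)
The plan is to reduce to the case of a $2$-connected graph and then combine the results of the previous sections with Corey's forbidden-minor theorem \cite[Theorem~1.1]{Corey}: a graph is of hyperelliptic type if and only if it has no $K_4$ and no $L_3$ minor. For the reduction, Lemma~\ref{lem:contractSeparatingEdge} lets me assume $G$ has no bridges, and Proposition~\ref{prop:WCT2connectedComponents} then lets me prove the theorem block by block. This is consistent on the hyperellipticity side: being of hyperelliptic type is preserved under connected minors \cite[Proposition~3.8]{Corey}, so if $G$ is of hyperelliptic type then so is each block; conversely, since $K_4$ and $L_3$ are $2$-connected and a $2$-connected minor of a graph is a minor of one of its blocks, if every block of $G$ is of hyperelliptic type then $G$ has no $K_4$ or $L_3$ minor. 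So I may assume $G$ is $2$-connected.

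For the ``if'' direction I would take $G$ of hyperelliptic type and invoke \cite[Theorem~4.5]{Corey}: either $G$ is itself strongly of hyperelliptic type, or, more precisely, $G$ is tropically equivalent to $G'/S$ where $G'$ is the underlying graph of a hyperelliptic tropical curve and $S\subseteq E(G')$ is a forest of edges. In the first case Proposition~\ref{prop:tauSwapSeparatingPair} immediately gives $\bw_\tau(G)=0$ for a suitable $\tau$. In the second, Proposition~\ref{prop:tauSwapSeparatingPair} shows that $G'$ is Ceresa-Zharkov trivial, Proposition~\ref{prop:weakCeresaContract} propagates this through the contraction of $S$ (carried out one non-loop edge at a time, which is possible because $S$ is a forest), and Proposition~\ref{prop:stableWCT} propagates it through the remaining tropical equivalence; hence $G$ is Ceresa-Zharkov trivial.

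For the ``only if'' direction I would argue the contrapositive. Assuming $G$ is not of hyperelliptic type, \cite[Theorem~1.1]{Corey} supplies a minor $G'$ of $G$ equal to $K_4$ or to $L_3$; both are of the form covered by Lemma~\ref{lem:K4L3minor} (a complete graph with parallel edges added), so that lemma gives edge sets $S_1\subseteq E(G)$ and $S_2\subseteq E(G/S_1)$ with $(G/S_1)\setminus S_2 = G'$ and every edge of $S_2$ a loop or parallel in $G/S_1$. Because $G'$ is $2$-connected with an edge between every pair of its vertices and $G/S_1\supseteq G'$, the graph $G/S_1$ and every graph obtained from it by deleting edges of $S_2$ is again $2$-connected, and each edge of $S_2$ is either a loop or parallel to an edge of $G'$ that survives to the end. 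Hence, were $G$ Ceresa-Zharkov trivial, Proposition~\ref{prop:weakCeresaContract} would make $G/S_1$ Ceresa-Zharkov trivial, and then deleting the edges of $S_2$ one at a time --- using Proposition~\ref{prop:removeParallelEdge}(1) at a loop and Proposition~\ref{prop:removeParallelEdge}(2) at a parallel edge --- would make $G'$ Ceresa-Zharkov trivial, contradicting Proposition~\ref{prop:K4Nontrivial} or Proposition~\ref{prop:L3Nontrivial}. So $G$ is not Ceresa-Zharkov trivial.

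The only genuinely external ingredient is the structure theorem \cite[Theorem~4.5]{Corey} driving the ``if'' direction; the rest is assembled from the propositions proved above. The part that demands the most care is the connectivity bookkeeping: one must check that Propositions~\ref{prop:weakCeresaContract} and \ref{prop:removeParallelEdge} genuinely apply at each contraction and deletion --- that the forest $S$ (resp.\ $S_1$) is contracted one non-loop edge at a time, that the intermediate graphs remain connected (and $2$-connected for the parallel-edge deletions), and that a parallel partner is always available for the next edge of $S_2$ to be deleted. All of this is arranged precisely by the facts that $S_1$ is a forest and that $G'$ is $2$-connected with an edge between every pair of its vertices, which is why reducing to blocks and invoking the forbidden-minor description of hyperelliptic type is the right first move.
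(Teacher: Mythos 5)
Your proposal is correct and follows essentially the same route as the paper's proof: reduction via 2-connected components (Proposition \ref{prop:WCT2connectedComponents}), Corey's Theorems 1.1 and 4.5 together with Propositions \ref{prop:tauSwapSeparatingPair} and \ref{prop:weakCeresaContract} for the ``if'' direction, and Lemma \ref{lem:K4L3minor} with Propositions \ref{prop:removeParallelEdge}, \ref{prop:K4Nontrivial}, \ref{prop:L3Nontrivial} for the ``only if'' direction. The only quibble is that under the paper's convention a graph with a loop is not 2-connected, so in the deletion step one should strip the loops of $S_2$ first (via Proposition \ref{prop:removeParallelEdge}(1)) before deleting parallel edges with part (2) --- exactly the bookkeeping the paper delegates to Proposition \ref{prop:WCT2connectedComponents}.
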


\begin{proof}
Suppose $G$ is not of hyperelliptic type. By \cite[Theorem 1.1]{Corey}, $G$ has a $K_4$ or $L_3$ minor. By Lemma \ref{lem:K4L3minor}, there are subsets $S_1\subset E(G)$ and $S_2\subset E(G/S_1)$ such that $(G/S_1) \setminus S_2$ is $K_4$ or $L_3$, and every edge of $S_2$ is a loop or parallel to another edge in $G/S_1$. The graph $(G/S_1) \setminus S_2$ is not Ceresa-Zharkov trivial by Propositions \ref{prop:K4Nontrivial} and \ref{prop:L3Nontrivial}. So $(G/S_1)$ is not Ceresa-Zharkov trivial by Propositions \ref{prop:removeParallelEdge} and \ref{prop:WCT2connectedComponents}, and therefore $G$ is not Ceresa-Zharkov trivial by Proposition \ref{prop:weakCeresaContract}. 

For the converse, first suppose that $G$ is of hyperelliptic type and 2-connected.  By \cite[Theorem 4.5]{Corey}, there is a $\tilde{G}$ that is strongly of hyperelliptic type such that $G = \tilde{G}/S$ for some subset $S\subset E(G)$. The graph $\tilde{G}$ is Ceresa-Zharkov trivial by Proposition \ref{prop:tauSwapSeparatingPair}, and therefore so is $G$ by Proposition \ref{prop:weakCeresaContract}.  

In general, if $G$ is of hyperelliptic type, then its 2-connected components are of hyperelliptic type, and hence Ceresa-Zharkov trivial. The graph $G$ is Ceresa-Zharkov trivial by Proposition \ref{prop:WCT2connectedComponents}. 
\end{proof}

\begin{corollary}
\label{cor:WCTminorclosed}
	The property of being Ceresa-Zharkov trivial is a minor closed condition on graphs.
\end{corollary}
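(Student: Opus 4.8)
The plan is to read the corollary off Theorem \ref{thm:WCTiffHET}: by that theorem a connected graph is Ceresa-Zharkov trivial exactly when it is of hyperelliptic type, and being of hyperelliptic type is a minor-closed property by \cite[Proposition 3.8]{Corey}; equivalently, by \cite[Theorem 1.1]{Corey}, it is the property of having no $K_4$ and no $L_3$ minor, which is patently minor-closed because the minor relation is transitive. So, up to a short reduction to the connected case, there is nothing left to prove.

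In detail, I would proceed as follows. First, combine Theorem \ref{thm:WCTiffHET} with \cite[Theorem 1.1]{Corey} to record that a connected graph $G$ is Ceresa-Zharkov trivial if and only if $G$ has neither $K_4$ nor $L_3$ as a minor. Next, let $G$ be a Ceresa-Zharkov trivial connected graph and let $G'$ be any minor of $G$. If $G'$ is connected, then every minor of $G'$ is again a minor of $G$, so $G'$ has no $K_4$ or $L_3$ minor and is therefore Ceresa-Zharkov trivial. If $G'$ is disconnected, each of its connected components $G'_i$ is obtained from $G'$ by deleting vertices and edges, hence is itself a minor of $G$ and so is Ceresa-Zharkov trivial by the connected case; and a graph is Ceresa-Zharkov trivial precisely when all of its connected components are (this is the analogue, across distinct components, of the block-decomposition statement in Proposition \ref{prop:WCT2connectedComponents}, or one may simply take it as the definition in the disconnected case). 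The low-genus cases cause no trouble: when the first Betti number is at most $1$ one has $L=\wedge^3 H=0$, so every such graph is vacuously Ceresa-Zharkov trivial and also of hyperelliptic type.

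There is no genuine obstacle here --- the corollary is a formal consequence of the main theorem together with the structure theory of hyperelliptic-type graphs from \cite{Corey}. The only mild nuisance is bookkeeping around connectivity, since a minor of a connected graph need not be connected; this is dispatched by the component-wise reduction above. (An alternative, more self-contained route would invoke only Propositions \ref{prop:weakCeresaContract}, \ref{prop:removeParallelEdge}, and \ref{prop:WCT2connectedComponents} together with a minor-decomposition argument in the spirit of Lemma \ref{lem:K4L3minor}, but this essentially duplicates the proof of Theorem \ref{thm:WCTiffHET}.)
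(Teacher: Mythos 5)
Your proposal is correct and takes essentially the same route as the paper: the corollary is deduced from Theorem \ref{thm:WCTiffHET} together with the fact that being of hyperelliptic type is minor-closed (\cite[Proposition~3.8]{Corey}). The extra bookkeeping you do for disconnected minors and low-genus cases is harmless and reasonable, but the paper simply omits it.
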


\begin{proof}
This follows from Theorem \ref{thm:WCTiffHET} and the fact that being of hyperelliptic type is a minor closed condition on graphs \cite[Proposition~3.8]{Corey}.
\end{proof}

\bibliographystyle{amsplain}
\bibliography{CELbib}

\providecommand{\bysame}{\leavevmode\hbox to3em{\hrulefill}\thinspace}
\providecommand{\MR}{\relax\ifhmode\unskip\space\fi MR }
\providecommand{\MRhref}[2]{%
  \href{http://www.ams.org/mathscinet-getitem?mr=#1}{#2}
}
\providecommand{\href}[2]{#2}
\begin{thebibliography}{10}

\bibitem{Beauville21}
Arnaud Beauville, \emph{A non-hyperelliptic curve with torsion {C}eresa class},
  C. R. Math. Acad. Sci. Paris \textbf{359} (2021), 871--872.

\bibitem{BeauvilleSchoen}
Arnaud Beauville and Chad Schoen, \emph{{A Non-Hyperelliptic Curve with Torsion
  Ceresa Cycle Modulo Algebraic Equivalence}}, International Mathematics
  Research Notices (2021), rnab344.

\bibitem{BLLS}
Dean Bisogno, Wanlin Li, Daniel Litt, and Padmavathi Srinivasan,
  \emph{Group-theoretic johnson classes and non-hyperelliptic curves with
  torsion ceresa class},  (2020), URL: https://arxiv.org/pdf/2004.06146.pdf.

\bibitem{BologneseBrandtChua}
Barbara Bolognese, Madeline Brandt, and Lynn Chua, \emph{From curves to
  tropical {J}acobians and back}, Combinatorial algebraic geometry, Fields
  Inst. Commun., vol.~80, Fields Inst. Res. Math. Sci., Toronto, ON, 2017,
  pp.~21--45.

\bibitem{Ceresa}
G.~Ceresa, \emph{{$C$} is not algebraically equivalent to {$C^{-}$} in its
  {J}acobian}, Ann. of Math. (2) \textbf{117} (1983), no.~2, 285--291.

\bibitem{Chan}
Melody Chan, \emph{Tropical hyperelliptic curves}, J. Algebraic Combin.
  \textbf{37} (2013), no.~2, 331--359.

\bibitem{Corey}
Daniel Corey, \emph{Tropical curves of hyperelliptic type}, J. Algebraic
  Combin. (2020).

\bibitem{CEL}
Daniel Corey, Jordan Ellenberg, and Wanlin Li, \emph{The ceresa class:
  tropical, topological, and algebraic},  (2020), arXiv preprint
  https://arxiv.org/pdf/2009.10824.pdf.

\bibitem{FarbMargalit}
Benson Farb and Dan Margalit, \emph{A primer on mapping class groups},
  Princeton Mathematical Series, vol.~49, Princeton University Press,
  Princeton, NJ, 2012.

\bibitem{Hain85}
Richard~M. Hain, \emph{The geometry of the mixed {H}odge structure on the
  fundamental group}, Algebraic geometry, {B}owdoin, 1985 ({B}runswick,
  {M}aine, 1985), Proc. Sympos. Pure Math., vol.~46, Amer. Math. Soc.,
  Providence, RI, 1987, pp.~247--282.

\bibitem{LS}
David T.-B.~G. Lilienfeldt and Ari Shnidman, \emph{Experiments with ceresa
  classes of cyclic fermat quotients},  (2021), URL:
  https://arxiv.org/pdf/2112.00520.pdf.

\bibitem{Powell}
Jerome Powell, \emph{Two theorems on the mapping class group of a surface},
  Proc. Amer. Math. Soc. \textbf{68} (1978), no.~3, 347--350.

\bibitem{zharkov}
Ilia Zharkov, \emph{{$C$} is not equivalent to {$C^-$} in its {J}acobian: a
  tropical point of view}, Int. Math. Res. Not. IMRN (2015), no.~3, 817--829.

\end{thebibliography}

\end{document}